\documentclass[11pt, leqno]{amsart}

\usepackage{graphicx}
\usepackage{verbatim}
\usepackage{color}
\usepackage{multicol}
\usepackage{stmaryrd}
\usepackage{parskip}
\usepackage{booktabs}
\usepackage{fancyhdr}

\usepackage[shortlabels]{enumitem}
\setlist[enumerate,1]{label=(\alph*), ref=(\alph*)}

\usepackage{hyperref}
\hypersetup{
  colorlinks=true,
  linkcolor=blue,
  urlcolor =blue,
  citecolor = magenta}

\usepackage{amssymb}
\usepackage{amsmath}
\usepackage{amsfonts}
\usepackage{amsthm}
\usepackage{epstopdf}
\usepackage{fullpage}

\newcommand{\MM}{{\mathcal M}}
\newcommand{\cQ}{{\mathcal Q}}

\newcommand{\OO}{{\mathcal O}}
\newcommand{\cond}{\mathsf{cond}}

\newcommand{\ZZ}{\mathbf{Z}}
\newcommand{\QQ}{\mathbf{Q}}

\renewcommand{\AA}{\mathbf{A}}
\newcommand{\cA}{\mathcal{A}}
\newcommand{\fabs}{F_{\text{abs}}}
\newcommand{\XX}{\mathbf{X}}
\newcommand{\HH}{\mathbf{H}}
\newcommand{\af}{\AA_f}
\newcommand{\qbar}{\overline{\QQ}}
\newcommand{\FF}{\mathbf{F}}
\newcommand{\fp}{\FF_p}
\newcommand{\fps}{\FF_{p^2}}
\newcommand{\fpbar}{\overline{\FF}_p}

\newcommand{\NN}{\mathbf{N}}
\newcommand{\RR}{\mathbf{R}}

\newcommand{\id}{\mathsf{id}}
\newcommand{\longto}{\longrightarrow}

\newcommand{\GL}{\mathsf{GL}}
\newcommand{\Mat}{\mathsf{M}}
\newcommand{\Gal}{\mathsf{Gal}}
\newcommand{\gq}{\Gal\big(\qbar/\QQ\big)}
\newcommand{\End}{\mathsf{End}}
\newcommand{\Hom}{\mathsf{Hom}}
\newcommand{\rank}{\mathsf{rank}}
\newcommand{\Spec}{\mathsf{Spec}\,}
\newcommand{\Ta}{\mathsf{Ta}}
\newcommand{\om}{\underline{\omega}}
\newcommand{\oh}{\mathcal{O}}

\newcommand{\st}{\,\colon\;}
\newcommand{\Nm}{\mathsf{Nm}}

\newcommand{\Br}{\mathsf{Br}}
\newcommand{\Cls}{\mathsf{Cls}}
\renewcommand{\ker}{\mathsf{ker}\,}
\newcommand{\op}{\mathrm{op}}

\newenvironment{psmallmatrix}
{\left(\begin{smallmatrix}}
    {\end{smallmatrix}\right)}

\usepackage[normalem]{ulem}

\usepackage{tikz-cd}
\usepackage{tikz}
\usetikzlibrary{arrows,positioning}
\usetikzlibrary{decorations.markings}
\usepackage{thmtools}

\numberwithin{equation}{section}

\theoremstyle{plain}

\newtheorem{thm}[equation]{Theorem}
\newtheorem*{thm*}{Theorem}
\newtheorem{prop}[thm]{Proposition}
\newtheorem{lemma}[thm]{Lemma}
\newtheorem{cor}[thm]{Corollary}
\newtheorem{notn}[thm]{Notation}
\newtheorem*{thma}{Theorem A}
\newtheorem*{thmb}{Theorem B}
\newtheorem*{corc}{Corollary C}

\theoremstyle{definition}
\newtheorem*{defnx}{Definition}
\newenvironment{defn}
{\pushQED{\qed}\defnx}
{\popQED\enddefnx}

\theoremstyle{remark}
\newtheorem{remarkx}[thm]{Remark}
\newenvironment{remark}
{\pushQED{\qed}\remarkx}
{\popQED\endremarkx}

\renewcommand{\det}{\mathsf{det}}
\renewcommand{\to}{\longto}

\renewcommand{\geq}{\geqslant}

\renewcommand{\leq}{\leqslant}

\newcommand{\ds}{\displaystyle}

\title{Hecke structure of quaternionic modular forms mod $p$}

\author{Yiannis Fam}
\address{London School of Geometry and Number Theory}
\email{yiannis.fam.23@ucl.ac.uk}

\author{Alexandru Ghitza}
\address{University of Melbourne}
\email{aghitza@alum.mit.edu}

\dedicatory{\`A Jean-Pierre Serre, avec gratitude}

\begin{document}

\begin{abstract}
  We relate the systems of Hecke eigenvalues arising from the (mod $p$) modular forms on the Shimura curve attached to the indefinite quaternion algebra $B$ of discriminant $\delta$ over $\QQ$ to the systems of Hecke eigenvalues arising from the (mod $p$) algebraic modular forms attached to the definite quaternion algebra $D$ of discriminant $p\delta$.

  Moreover, we discuss details of the Hecke structure on the latter spaces, following ideas of Serre.
  The entire setup can be seen as a Shimura curve analogue of \cite{serre-letters}, Serre's letter to Tate on quaternions and modular forms.

  The bijection between the sets of systems of Hecke eigenvalues is a special case of \cite{TY}; the novelty of this paper is the explicit nature of the construction, allowing for finer control of its behaviour with respect to weights, as well as the results on the Hecke module structure.
\end{abstract}

\maketitle

\section{Introduction}

In a 1987 letter \cite{serre-letters} to Tate, Serre described an approach to the study of the Hecke eigenvalues of modular forms (mod $p$).
This was strongly motivated by his recently published \cite{serre-duke} conjecture on the modularity of Galois representations (mod $p$), where he mused (question 2 on page 198) about the possible existence of a ``Langlands philosophy modulo $p$''.
Serre's method is to some extent a ``Jacquet--Langlands correspondence modulo $p$'': it relates the systems of eigenvalues of modular forms (mod $p$) to those of certain functions on the adelic group coming from the units in the definite quaternion algebra $D$ ramified at the prime $p$.
After proving this result, he commented on various benefits that this correspondence brings to the study of modular forms and the Galois representations attached to them; some of these benefits are sketched, while others are simply hinted at.

Following Serre's work, Gross developed a theory of algebraic modular forms \cite{gross-amf} on a reductive algebraic group $G$ over $\QQ$ satisfying a certain finiteness property, then conjectured \cite{gross-modp} that algebraic modular forms (mod $p$) would correspond to Galois representations valued in the Langlands dual group $\hat{G}$.
Viewed from this angle, Serre's letter
\begin{itemize}
  \item establishes a bijection between the systems of Hecke eigenvalues for modular forms (mod $p$) and those coming from algebraic modular forms (mod $p$) on the group $D^\times$, as long as forms of all weights are considered together;
  \item does so in an explicit manner, allowing one to determine when and how the weights change;
  \item identifies certain structural elements on the side of algebraic modular forms that simplify and illuminate the Hecke structure of modular forms (mod $p$).
\end{itemize}

The intervening years have seen successive generalisations of Serre's result (\cite{Ghitza1}, \cite{Reduzzi}), eventually encompassed by the work of Goldring--Koskivirta \cite{GK} and Terakado--Yu \cite{TY}.
In short, the main result of \cite{TY} establishes the bijection between the systems of Hecke eigenvalues for automorphic forms (mod $p$) on a Shimura variety of Hodge type for an algebraic group $G$, and those coming from algebraic modular forms (mod $p$) on an appropriate inner form of $G$.
A major ingredient of their proof is the generalisation in \cite[Section 11]{GK} of the geometric part of Serre's original argument, studying the restriction of automorphic forms (mod $p$) from the entire Shimura variety to a canonically defined zero-dimensional subset.

In the current paper, we establish Serre's result in the context of Shimura curves over $\QQ$.
From a purely quaternionic point of view, his ``Jacquet--Langlands correspondence mod $p$'' related the indefinite quaternion algebra $B=\Mat_2$ of discriminant $\delta=1$, to the definite quaternion algebra $D$ of discriminant $p$.
Our work relates an arbitrary indefinite quaternion algebra $B$ of discriminant $\delta>1$ to the definite quaternion algebra $D$ of discriminant $p\delta$; geometrically, our automorphic forms live on Shimura curves instead of modular curves.

\textbf{The following conventions will be in effect for the entire paper:} $p>3$ is a fixed prime number, $\delta>1$ and $N\geq 4$ are integers coprime to $p$ and to each other, and all Hecke operators we consider are indexed by primes $\ell$ that do not divide $p\delta N$.

We now describe our main results.
For the definition of the space of modular forms $M_k$ mod $p$ with respect to $B^\times$ of weight $k$ and level $V=V(N),V_1(N)$, see Section 2; this includes the Hasse invariant $\HH$ and the quotient $W_k=M_k/\HH M_{k-(p-1)}$.
For the definition of the double coset spaces $\Omega = \Omega^D(N), \Omega_1^D(N)$ of $D^\times(\af)$ see Section 3.
The $\fpbar$-valued functions on $\Omega$, denoted $\MM(\Omega)$, are the spaces of algebraic modular forms for $D^\times$ (mod $p$) of level $N$; $\MM_k(\Omega)$ are the weight $k$ subspaces.

\begin{thma}
  For every $k\geq 0$, restriction to the supersingular locus induces a Hecke-equivariant injective map $W_k\to\MM_k(\Omega)$.
  If $k>p+1$, this is an isomorphism.
\end{thma}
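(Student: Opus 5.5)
Following Serre's letter, we work on the Shimura curve $X$ of level $V$ over $\fpbar$. It carries the universal false elliptic curve (an abelian surface $A$ with $\OO_B$-action) and the line bundle $\om$, so that $M_k=H^0(X,\om^{\otimes k})$. The Hasse invariant $\HH\in M_{p-1}$ vanishes to order exactly one at each point of the supersingular locus $\Sigma$, which is a reduced finite set of points.

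The first step is to identify $\Sigma$ with $\Omega$ in a Hecke-equivariant way. Fix one supersingular point $A_0$. Every supersingular false elliptic curve with level structure is isogenous to $A_0$ by $\OO_B$-linear isogenies of degree prime to $p$. The algebra $\End_{\OO_B}(A_0)\otimes\QQ$ is a definite quaternion algebra ramified at $p$ and at the primes dividing $\delta$, so it is $D$. The standard Honda--Tate / Deuring argument then gives a bijection $\Sigma\cong D^\times\backslash D^\times(\af)/U=\Omega$, where $U$ is the level-$N$ open compact. The isomorphism $D_p^\times\cong$ units of the local division algebra enters through the action of $D_p^\times$ on the Dieudonné module. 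Under this bijection, Hecke correspondences at $\ell\nmid p\delta N$ on $X$ preserve $\Sigma$, because $\ell$-isogenies preserve supersingularity. They match the double-coset Hecke operators on $\Omega$.

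Next, the fibre of $\om$ at a supersingular point is the cotangent space of $A_0$, or more precisely its $e$-component for an idempotent $e\in\OO_B\otimes\ZZ_p\cong\Mat_2(\ZZ_p)$. This is a one-dimensional $\fps$-vector space on which $\OO_{D,p}^\times$ acts through its reduction $\OO_{D,p}^\times\to\fps^\times$. Trivialising these lines identifies sections of $\om^{\otimes k}|_\Sigma$ with functions on $D^\times\backslash D^\times(\af)/U'$ that transform by the $k$-th power of this character, where $U'$ is $U$ with the $p$-component enlarged to include the kernel of reduction. This is exactly $\MM_k(\Omega)$ as defined in Section 3. The restriction map $M_k\to\MM_k(\Omega)$ commutes with Hecke operators, since the isogenies are prime to $p$ and respect $\om$. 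I would normalise the Hecke operators by $\ell^{-1}$ or similar factors so that the two sides agree.

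Injectivity on $W_k$ follows because $f|_\Sigma=0$ means $f$ vanishes on the reduced divisor $\Sigma=\mathrm{div}(\HH)$. Hence $f=\HH g$ with $g$ a section of $\om^{k-(p-1)}$, that is, $g\in M_{k-(p-1)}$.

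For surjectivity when $k>p+1$, tensor the exact sequence
\[0\to\om^{k-(p-1)}\xrightarrow{\HH}\om^{k}\to\om^k|_\Sigma\to0\]
and take cohomology. It suffices that $H^1(X,\om^{k-p+1})=0$. By Kodaira--Spencer, $\om^{2}\cong\Omega^1_X$, up to a twist trivial after the level structure. Serre duality then gives $H^1(X,\om^{m})\cong H^0(X,\om^{2-m})^\vee$, which vanishes when $2-m<0$, that is, when $m>2$. Since $\om$ has positive degree, this holds for $m=k-p+1>2$, that is, for $k>p+1$.

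The main obstacle is the second step: establishing the Kodaira--Spencer isomorphism for false elliptic curves in characteristic $p$, and especially the precise supersingular uniformisation. One must verify that level $N\geq4$ makes $X$ a fine moduli scheme, and that the stabilisers are exactly $\OO_{D,p}^\times$, so that the weight-$k$ transformation rule comes out right. I would argue via Dieudonné modules of $e A_0[p]$.
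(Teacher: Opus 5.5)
Your proposal is correct and follows essentially the paper's argument. A Deuring-type bijection identifies $\MM_k(\Omega)$ Hecke-equivariantly with $Q_k=H^0(\XX,\cQ_k)$ (the paper builds it from Waterhouse's kernel ideals and Tate's isogeny theorem rather than Dieudonn\'e modules); injectivity of $W_k\to Q_k$ comes from the simple zeros of $\HH$; and surjectivity for $k>p+1$ comes from $H^1(\XX,\om^{\otimes k-(p-1)})=0$ via Kodaira--Spencer and Serre duality. The one slip is that the paper's $\Omega$ already has $p$-component $\oh_p^\times(\pi)$, so it is in bijection with supersingular points together with an $\fps$-rational basis of $\om$ (\autoref{main thm 2}), not with $\Sigma$ itself; passing from $\Sigma$ to $\Omega$ shrinks the $p$-component from $\oh_p^\times$ to $\oh_p^\times(\pi)$ rather than enlarging it.
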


\begin{thmb}
  For every $k\in\ZZ$ there are explicit Hecke-equivariant isomorphisms
  \begin{equation*}
    \MM_k(\Omega)\cong \MM_{k+p^2-1}(\Omega),\quad
    \MM_k(\Omega)\cong \MM_{pk}(\Omega),\quad
    \MM_k(\Omega)[\chi]\cong \MM_{k+p+1}(\Omega),
  \end{equation*}
  where $\chi$ denotes the mod $p$ cyclotomic character.
\end{thmb}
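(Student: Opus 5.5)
The plan is to work entirely on the side of algebraic modular forms, following Serre's letter. I expect $\MM_k(\Omega)$ (Section 3) to be the space of functions $f\colon D^\times(\QQ)\backslash D^\times(\af)/U\to \fpbar$ with the weight-$k$ transformation law at $p$. Concretely, let $\OO_{D,p}$ be the maximal order of $D_p$ and $\fps=\OO_{D,p}/\pi\OO_{D,p}$ its residue field. Then $f(xu)=\bar u^{-k} f(x)$ for $u\in\OO_{D,p}^\times$, where $\bar u\in\fps^\times$ is the reduction. Equivalently, $\MM(\Omega)$ is the space of all functions on $\Omega$, and it decomposes under the finite abelian group $\fps^\times$ acting by right translation into eigenspaces $\MM_k(\Omega)$, with $k$ taken mod $p^2-1$. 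The Hecke operators $T_\ell$ for $\ell\nmid p\delta N$ act through the component at $\ell$. They therefore commute with any operation that only involves the $p$-component or multiplication by functions factoring through the reduced norm. All three isomorphisms will be of this kind.

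\emph{First isomorphism.} The character $u\mapsto \bar u^{-k}$ of $\fps^\times$ depends only on $k$ mod $p^2-1$. So $\MM_k(\Omega)=\MM_{k+p^2-1}(\Omega)$ literally, and the identity map works; Hecke equivariance is automatic.

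\emph{Second isomorphism.} Let $\Pi\in D_p$ be a uniformiser with $\Pi^2=p$ (up to a unit) normalising $\OO_{D,p}$. Conjugation by $\Pi$ induces the Frobenius $x\mapsto x^p$ on $\fps$. Define $(Sf)(x)=f(x\Pi)$, where $\Pi$ is placed at the $p$-component. Since $\Pi$ normalises $\OO_{D,p}^\times$ and the level structure away from $p$, $S$ is a well-defined bijection on $\Omega$-functions. For $u\in\OO_{D,p}^\times$ we have $(Sf)(xu)=f(x\Pi\cdot \Pi^{-1}u\Pi)=\overline{\Pi^{-1}u\Pi}^{\,-k}f(x\Pi)=\bar u^{-pk}(Sf)(x)$, using $p$-th power Frobenius (or its inverse, which is the same thing on $\fps$). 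So $S\colon \MM_k\to\MM_{pk}$ is an isomorphism, with inverse given by translation by $\Pi^{-1}$. It commutes with $T_\ell$ because right translation at $p$ commutes with the double coset action at $\ell\neq p$.

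\emph{Third isomorphism.} This step I expect to need the most care. Consider the reduced norm $\Nm\colon D^\times(\af)\to \af^\times$ and the function $e(x)=\chi$-type character given by $e(x)=\overline{\Nm(x_p)p^{-v_p(\Nm x_p)}}\cdot\omega(\Nm x)$. This must be a function on $\Omega$: trivial on $D^\times(\QQ)$ and on the level group away from $p$. The natural candidate is to take the composition of $\Nm$ with the mod $p$ cyclotomic character of $\QQ^\times\backslash\af^\times/\widehat{\ZZ}^{(p)\times}\RR_{>0}$, i.e.\ the idelic character attached to $\chi$. Positivity of the reduced norm on the definite algebra $D$ makes $\QQ^\times$ contribute trivially. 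Restricted to $u\in\OO_{D,p}^\times$, this function is $\Nm(\bar u)^{\pm1}=\bar u^{\pm(p+1)}$, since the reduced norm reduces to the norm $\fps\to\fp$. Hence multiplication by $e$ (or $e^{-1}$, fixing the sign convention) maps $\MM_k\to\MM_{k+p+1}$ bijectively. Under $T_\ell$, a coset representative $g$ at $\ell$ has $\Nm g=\ell$, so $T_\ell(ef)=\chi(\ell)\,e\,T_\ell f$. Thus multiplication by $e$ is Hecke-equivariant after twisting eigenvalues by $\chi$, which is the meaning of $\MM_k(\Omega)[\chi]$. The delicate points are checking that $e$ is invariant under the level group $U$: it is, since $\Nm$ of $U^{(p)}$ lands in $\widehat\ZZ^{(p)\times}$, where the idelic $\chi$ is trivial. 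The other is getting the sign of the exponent $p+1$ consistent with the paper's normalisation of weights.
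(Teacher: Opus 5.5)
Your proposal is correct and is essentially the paper's proof. The three maps are the same: the identity for $k\mapsto k+p^2-1$, translation by the uniformiser $\pi$ at $p$ (conjugation by $\pi$ induces Frobenius on $\fps$), and multiplication by the norm character $\chi_D=\chi'\circ\Nm$ (using positivity of the reduced norm on $D$ and $\Nm(g)=\ell$ for the Hecke coset representatives). The differences are cosmetic: you translate on the right rather than the left, and you write the inverse $f\mapsto f(x\pi^{-1})$ explicitly where the paper uses a finite-dimensionality argument.

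The sign you leave open is not a free convention. Since $\chi'$ is trivial on the diagonal $\QQ^\times_{>0}$ and on $\prod_{q\neq p}\ZZ_q^\times$, one has $\chi'(\iota_\ell(\ell))\,\chi'(\iota_p(\ell))=1$. So the character that twists $T_\ell$ by $\chi(\ell)$ is forced to satisfy $\chi_D(\iota_p(\mu))=[\mu]^{-(p+1)}$, i.e.\ to have weight $+(p+1)$ as in Proposition~\ref{psiD}. Replacing $e$ by $e^{-1}$ would simultaneously replace the twist by $\chi^{-1}$, so you should use $e$ itself.
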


\begin{corc}
  Let $(a_{\ell})\in\ds\prod_{\ell\nmid p\delta N} \fpbar$ be a system of Hecke eigenvalues.
  \begin{enumerate}
    \item If $(a_\ell)$ arises from $M_k$, then it arises from $\MM_{k'}(\Omega)$ for some $0\leq k'\leq k$ such that $k'\equiv k\pmod{p-1}$.
    \item If $(a_\ell)$ arises from $\MM_k(\Omega)$, then it arises from $M_{k'}$ for every $k'>p+1$ such that $k'\equiv k,pk\pmod{p^2-1}$.
  \end{enumerate}
  In particular, the systems of Hecke eigenvalues arising from $\ds\bigoplus_k M_k$ and from $\MM(\Omega)$ coincide.
\end{corc}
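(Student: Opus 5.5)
The plan is to deduce Corollary C from Theorems A and B, together with the filtration of $M_k$ by powers of the Hasse invariant $\HH$. Throughout we use that $\HH$ has weight $p-1$ and commutes with the Hecke operators $T_\ell$ for $\ell\nmid p\delta N$, so multiplication by $\HH$ is a Hecke-equivariant injection $M_{k-(p-1)}\hookrightarrow M_k$.

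For part (a), suppose $(a_\ell)$ arises from a nonzero eigenform $f\in M_k$. We argue by induction on $k$.
\begin{itemize}
\item If the image of $f$ in $W_k=M_k/\HH M_{k-(p-1)}$ is nonzero, it is an eigenvector with eigenvalues $(a_\ell)$. Theorem A embeds $W_k$ Hecke-equivariantly into $\MM_k(\Omega)$, so we may take $k'=k$.
\item Otherwise $f=\HH g$ with $g\in M_{k-(p-1)}$ nonzero. Since multiplication by $\HH$ is Hecke-equivariant and injective, $g$ is again an eigenform with the same system $(a_\ell)$.
\end{itemize}
The induction terminates because $M_j=0$ for $j<0$, so some $k'=k-m(p-1)\geq 0$ falls into the first case.

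A more careful version of this argument avoids assuming that $f$ itself is an eigenvector. The finite filtration $\HH^m M_{k-m(p-1)}$ of $M_k$ is Hecke-stable. If $(a_\ell)$ occurs in $M_k$, it occurs in some graded piece, and that graded piece is isomorphic to $W_{k-m(p-1)}$. Simultaneous eigenvectors exist in any nonzero Hecke-stable subquotient, since the Hecke algebra is commutative and the field is algebraically closed.

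For part (b), suppose $(a_\ell)$ arises from $\MM_k(\Omega)$ and let $k'>p+1$ with $k'\equiv k\pmod{p^2-1}$. Iterating the first isomorphism of Theorem B gives a Hecke-equivariant isomorphism $\MM_k(\Omega)\cong\MM_{k'}(\Omega)$; Theorem B applies to every $k\in\ZZ$, so negative $k$ cause no issue. Since $k'>p+1$, Theorem A gives $W_{k'}\cong\MM_{k'}(\Omega)$, so $(a_\ell)$ occurs in $W_{k'}$. As in part (a), a system occurring in a quotient of the finite-dimensional Hecke module $M_{k'}$ occurs in $M_{k'}$ itself: take the generalized eigenspace of $M_{k'}$ mapping onto the corresponding one in $W_{k'}$, and find an eigenvector inside it.

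For $k'\equiv pk\pmod{p^2-1}$, first apply $\MM_k(\Omega)\cong\MM_{pk}(\Omega)$, then proceed as above.

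Here one must check what Hecke-equivariance means in Theorem B. If an isomorphism twists the action, for instance by $\chi(\ell)=\ell$ as the $[\chi]$ notation in the third isomorphism suggests, the conclusion would be about a twisted system. We take the first two isomorphisms as untwisted, as stated, and only they are used here.

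For the final assertion, part (a) shows every system from $\bigoplus_k M_k$ comes from $\MM(\Omega)$. Conversely, $\MM(\Omega)=\bigoplus_k\MM_k(\Omega)$ with Hecke-stable summands, so any eigensystem occurring in $\MM(\Omega)$ occurs in some $\MM_k(\Omega)$. Infinitely many $k'>p+1$ satisfy $k'\equiv k\pmod{p^2-1}$, so part (b) applies.

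The main obstacle is bookkeeping rather than depth. We need the weight decomposition $\MM(\Omega)=\bigoplus_k\MM_k(\Omega)$ to be Hecke-stable, with weights taken modulo $p^2-1$ as Section 3 should provide. We also need to pass eigensystems through subquotients, which uses commutativity and finite-dimensionality of the Hecke action on each weight piece.
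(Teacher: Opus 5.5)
Your proposal is correct and follows the paper's own route. The paper deduces Corollary C directly from Theorem A, the two untwisted isomorphisms of Theorem B, and \autoref{arise quotient 2}, and the proof of that proposition is your argument: dividing by powers of $\HH$ for (a), and the Ash--Stevens lifting of an eigensystem from $W_{k'}$ to $M_{k'}$ for (b). Your induction that removes one factor of $\HH$ at a time is a slight improvement on the paper's ``factor out $\HH^r$'' step, since it does not need the zeros of $\HH$ to be simple.
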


% \begin{thmmain}
%   The systems of Hecke eigenvalues $(a_\ell)$ arising from the modular forms mod $p$ with respect to $B^\times$, over all weights $k$ and fixed level $V$, are the same as those arising from $\MM(\Omega)$.
% \end{thmmain}

The last statement of the corollary is a special case of the results of Terakado--Yu we already mentioned, see \cite[Theorem 0.1 and Section 6.1]{TY}.
% The additional contribution that our paper makes is threefold:
% \begin{itemize}
%   \item the correspondence is obtained in a constructive, hands-on way, that requires a similar level of algebraic-geometric machinery as Serre's original work;
%   \item this gives us better control over the weights involved on the two sides of the bijection than what Terakado--Yu can do in their much more general setting;
%   \item we study the structure of the Hecke modules appearing on the algebraic modular forms side, establishing or generalising several of Serre's observations.
% \end{itemize}

The paper is organised as follows. In Section 2, we define the Shimura curve attached to the indefinite quaternion algebra $B$ and recall the theory of automorphic forms (mod $p$) defined on this Shimura curve. We recall the definition of the Hasse invariant with respect to $B^\times$ that played a central role in Serre's argument. In particular, we prove Proposition \ref{Hasse quotient 2}, which generalises the fact that multiplication by the classical Hasse invariant preserves mod $p$ Hecke eigenvalues. In Section 3 we recall the Serre--Gross algebraic modular forms (mod $p$) on the definite quaternion algebra $D$ and study their structure as Hecke modules, proving Theorem B. In Section 4, we relate the zero locus of the Hasse invariant on the Shimura curve to the adelic double coset spaces of Section 3 by a generalisation of the Deuring correspondence for supersingular elliptic curves. This allows us, in Section 5, to explicitly relate the systems of Hecke eigenvalues of automorphic forms for $B^\times$ (mod $p$) and algebraic modular forms for $D^\times$ (mod $p$), adapting the argument of Serre. This results in proofs of Theorem A and Corollary C.

\textbf{Acknowledgements:}
The authors thank David Kohel and Ariel Pacetti for their comments and corrections.
This paper originated in the first author's MPhil thesis at the University of Melbourne, which was supported by the University of Melbourne Robert George Williams Scholarship, as well as the Australian Government Research
Training Program Scholarship. The first author is currently supported by the Engineering and Physical Sciences Research Council [EP/S021590/1], the EPSRC Centre for Doctoral Training
in Geometry and Number Theory (The London School of Geometry and Number Theory) at Imperial College
London.

\section{Automorphic forms for $B^\times$ (mod $p$)}

Let $B$ be the unique indefinite non-split quaternion algebra over $\QQ$ with discriminant $\delta>1$ coprime to $p$.
The splitting of $B$ at $\infty$, so that $B^\times(\RR)\cong\GL_2(\RR)$, allows one to construct a Shimura curve $\XX$, analogous to the modular curves in the $\GL_2 / \QQ$ setting. We may then define modular forms (mod $p$) on this Shimura curve, with associated Hecke operators, and study the systems of Hecke eigenvalues.
%This contribution from the archimedean place was not apparent in the theory of locally constant functions on $D^\times$ so far, and this is a consequence of the fact that $D^\times(\RR)$ (the units of Hamilton's quaternions) is compact modulo the centre. Such a condition trivialises many of the conditions defining general automorphic forms.

As with the modular curve, the Shimura curve represents a moduli problem that we now describe. See also \cite{Mil1}, \cite{DT1}, \cite{Buz2}, \cite{Kas1} or \cite{Cla1}. Fix a maximal order $\OO_B$ of $B$ and fix isomorphisms $\OO_B \otimes \ZZ_\ell \cong \Mat_2(\ZZ_\ell)$ for all primes $\ell \nmid \delta$, and $\OO_B\otimes \ZZ/N\ZZ\cong \Mat_2(\ZZ/N\ZZ)$ for any $N$ coprime to $p\delta$.

\begin{defn}
  A false elliptic curve over a scheme $S$ (on which $\delta$ is invertible) is an abelian surface $A$ over $S$, together with an injective ring homomorphism $\iota\st \OO_B \hookrightarrow \End_S(A)$. An isogeny $f\st A \to A_0$ of false elliptic curves is an isogeny of abelian surfaces over $S$ that commutes with the actions of $\OO_B$; in other words we have the commutative diagram
  % $$\xymatrix{
  %     A \ar[r]^f \ar[d]_{\iota(b)} & A_0 \ar[d]^{\iota_0(b)} \\ A \ar[r]_f & A_0}$$
  \begin{center}
    \begin{tikzcd}
      A \arrow[r, "f"] \arrow[d, "\iota(b)"'] & A_0 \arrow[d, "\iota_0(b)"] \\
      A \arrow[r, "f"'] & A_0
    \end{tikzcd}
  \end{center}
  for every $b \in \OO_B$.
\end{defn}

\begin{remark}
  The term ``fausse courbe elliptique'' is attributed to Serre by Deligne and Rapoport \cite[p. 155]{DR}.
  David Kohel points out that translating this as ``fake elliptic curve'' does a better job of capturing its essence.
  We decided to stick with ``false elliptic curve'' as it is employed by all the references we are using.
\end{remark}

As in \cite[Remark 1.3]{Mil1}, Shimura varieties typically parametrise certain abelian varieties together with a polarisation $\lambda\st A \to A^\vee$. Such a polarisation induces the Rosati involution on $\End(A) \otimes \QQ$, which is known to be positive for the trace form. On the other hand, one can define involutions on $B$, and one wants this to coincide with the Rosati involution on $B \subset \End(A)\otimes\QQ$. Such an involution must necessarily be positive. Let $x \mapsto \bar{x}$ be the canonical involution on $B$. One can find $t \in \OO_B$ such that $t^2=-\delta$, then the involution $x^* := t^{-1}\bar{x} t$ is a positive involution on $\OO_B$. There exists a unique principal polarisation of $A$ such that the corresponding Rosati involution induces $*$ on $\OO_B$. Consequently, by fixing $t$, any false elliptic curve admits a canonical principal polarisation.

Given these principal polarisations on false elliptic curves, any isogeny of false elliptic curves $f\st A \to A_0$ has a dual isogeny $f^\vee\st A_0^\vee \to A^\vee$, which when composed with the principal polarisations gives an isogeny of false elliptic curves $f^t\st A_0 \to A$,
% $$\xymatrix{
%   A \ar[r]^f \ar[d]^\lambda & A_0 \ar[d]^{\lambda_0} \\
%   A^\vee \ar@/^/[u] & A_0^\vee \ar@/^/[u] \ar[l]^{f^\vee}}$$
\begin{center}
  \begin{tikzcd}
    A \arrow[r, "f"] \arrow[d, "\lambda"] & A_0 \arrow[d, "\lambda_0"] \\
    A^\vee \arrow[u, bend left] & A_0^\vee \arrow[u, bend left] \arrow[l, "f^\vee"]
  \end{tikzcd}
\end{center}
where $f^t \circ f\st A \to A$ is locally given by multiplication by an integer. If this integer is constant on $S$ it is called the false degree of $f$. For instance, the false degree of $[n]$ is $n^2$.

We define level structures on false elliptic curves in analogy to those defined on elliptic curves.

\begin{defn}
  Define level $V(N)$ structure on a false elliptic curve $A/S$ to be an isomorphism
  \begin{equation*}
    \alpha\st (\OO_B \otimes \ZZ/N\ZZ)_S \cong A[N]
  \end{equation*}
  of group schemes over $S$ with left action of $\OO_B$.
  Define level $V_1(N)$ structure to be an inclusion
  \begin{equation*}
    \alpha\st (\ZZ/N\ZZ \times \ZZ/N\ZZ)_S  \hookrightarrow A[N]
  \end{equation*}
  of group schemes over $S$ with left action of $\OO_B$  (defined on $\ZZ/N\ZZ \times \ZZ/N\ZZ$ by the identification \\ $\OO_B \otimes \ZZ/N\ZZ \cong \Mat_2(\ZZ/N\ZZ)$).
\end{defn}

We will also need the fact that the quotient $A/G$ of a false elliptic curve $A$ by a finite flat subgroup scheme $G$ that is stable under $\OO_B$ is naturally a false elliptic curve with action of $\OO_B$ induced by $\iota$. We have an isogeny $\pi\st A \to A/G$. If we have an isogeny $\eta\st A \to A_0$ of false elliptic curves, then for $H= (\OO_B \otimes \ZZ/N\ZZ)_S$ or $(\ZZ/N\ZZ \times \ZZ/N\ZZ)_S$, and $\alpha\st H \to A[N]$ a level structure, we define $\alpha_\eta = \eta \circ \alpha\st H \to A_0[N]$. This is a level structure when $\eta$ has false degree coprime to $N$.

\begin{thm}
  For $N \geq4$, the functor from the category of schemes over $\ZZ[1/N\delta]$ to the category of sets, assigning to any scheme $S$ the set of isomorphism classes of false elliptic curves over $S$ with level $V(N)$ structure, is representable by a smooth proper curve $\XX(N)$ over $\ZZ[1/N\delta]$. This is the Shimura curve of level $V(N)$ (associated to $B$). The same is true replacing $V(N)$ with $V_1(N)$, and we denote the Shimura curve by $\XX_1(N)$.
\end{thm}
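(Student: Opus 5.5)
This is a standard representability statement, so I would follow the usual route for moduli of abelian varieties with extra structure: first realise the problem inside a moduli space of polarised abelian surfaces, then deduce rigidity, smoothness and properness. The \emph{first step} uses the canonical principal polarisation attached to a false elliptic curve by fixing $t\in\OO_B$ with $t^2=-\delta$. This identifies our functor with the functor of triples $(A,\lambda,\iota,\alpha)$, where $\lambda$ is a principal polarisation whose Rosati involution restricts to $*$ on $\OO_B$. Forgetting $\iota$ and extending $\alpha$ to a full symplectic level-$N$ structure (there are finitely many choices) gives a morphism to Mumford's fine moduli scheme $\mathcal{A}_{2,1,N}$ over $\ZZ[1/N]$. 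This scheme exists for $N\geq 3$ because a principally polarised abelian variety with full level $N$ structure has no nontrivial automorphisms; for $V(N)$ and $V_1(N)$ the same rigidity argument (Serre's lemma: an automorphism of finite order that is trivial on $A[N]$, $N\geq 3$, is trivial) applies to the part of $A[N]$ our structure pins down. The hypothesis $N\geq 4$ is there to cover the $V_1(N)$ case.

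The \emph{second step} is to show that the extra data (the action $\iota$ satisfying the Rosati condition, and $\alpha$ as an $\OO_B$-linear map) cut out a closed subscheme of a finite cover of $\mathcal{A}_{2,1,N}$. Endomorphisms of abelian schemes form a representable unramified functor, and the relations of $\OO_B$ are closed conditions, so this gives representability by a quasi-projective scheme $\XX(N)$ over $\ZZ[1/N\delta]$, and likewise $\XX_1(N)$.

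The \emph{third step} is smoothness of relative dimension one, via Grothendieck--Messing or Serre--Tate deformation theory. Since $\delta$ is invertible, $\OO_B\otimes\ZZ_\ell\cong\Mat_2(\ZZ_\ell)$ at every residue characteristic $\ell$ of the base. By Morita equivalence, deforming $(A,\iota)$ is then the same as deforming a one-dimensional $p$-divisible group of height two, that is, $e\,A[\ell^\infty]$ for an idempotent $e$. Such deformations are unobstructed of dimension one, and the polarisation condition is automatic because $*$ is positive. In characteristic zero the same count follows from the complex uniformisation $B^\times(\QQ)\backslash(\mathfrak{h}^{\pm}\times B^\times(\af)/K)$.

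The \emph{last step} is properness, via the valuative criterion. A false elliptic curve over the fraction field of a DVR has potentially good reduction: $B$ is a division algebra, so the toric part of the Néron model of $A$, whose character group is an $\OO_B$-module of rank $\leq 2$, must vanish. This uses that $\OO_B$ does not act on a nonzero lattice of rank less than $4$. Then the Néron--Ogg--Shafarevich criterion and the rigidity from the level structure descend the extension to the original DVR. I expect this properness argument (absence of cusps), together with getting smoothness in all characteristics prime to $N\delta$, to be the main obstacle; the rest is formal. In practice one would cite Deligne--Rapoport, Buzzard or Milne for these points.
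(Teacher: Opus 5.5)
The paper gives no argument of its own here; it only cites \cite{DT1} and \cite{Buz2}. Your outline is the standard argument behind those references: Mumford's moduli space, Serre--Tate plus Morita reducing deformations to the one-dimensional height-two group $e\,A[\ell^\infty]$, and no toric part because $B$ is a division algebra. For $V(N)$ it is fine at the level of a sketch.

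The gap is the rigidity step in the $V_1(N)$ case, which does not work as you state it. A $V_1(N)$-structure only pins down the $\OO_B$-submodule $\alpha(\ZZ/N\ZZ\times\ZZ/N\ZZ)$ of $A[N]\cong\Mat_2(\ZZ/N\ZZ)$. An $\OO_B$-linear automorphism fixing this submodule pointwise acts on $A[N]$ by right multiplication by a matrix of the form $\begin{psmallmatrix}1&0\\ *&*\end{psmallmatrix}$ (compare the proof of \autoref{enrich false deuring}), which need not be the identity. Serre's lemma requires triviality on all of $A[N]$, so it says nothing here. There is also no general principle that it ``applies to the part of $A[N]$ the structure pins down''. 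Representability for $V_1(N)$, and the descent step in your properness argument, both hinge on this rigidity. It is also exactly where $N\geq 4$ rather than $N\geq 3$ enters.

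What is needed is the $\Gamma_1(N)$ argument transported by Morita. An automorphism $\gamma$ of $(A,\iota)$ has finite order and preserves the canonical polarisation. So for $\ell\mid N$ it acts on the rank-two module $e\,\Ta_\ell(A)$ by some $g$ with:
\begin{itemize}
\item $\det g=1$, because $\gamma$ preserves the perfect alternating pairing the polarisation induces there;
\item $\mathrm{tr}\,g\in\{0,\pm1,\pm2\}$, because it is the reduced trace of a root of unity of degree at most $2$.
\end{itemize}
Fixing $\alpha$ means $g$ fixes a vector of exact order $N$ in $e\,A[N]$, so $\mathrm{tr}\,g\equiv 1+\det g=2\pmod N$. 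The cases then go as follows.
\begin{itemize}
\item For $N\geq 5$ this forces $\mathrm{tr}\,g=2$, so $g=1$ and hence $\gamma=1$.
\item For $N=4$ the extra possibility $g=-1$ is excluded because $-1$ moves points of order $4$.
\item For $N=3$ the argument genuinely fails: an $\OO_B$-automorphism of order $3$, when one exists, is unipotent modulo $3$ and fixes some $V_1(3)$-structure.
\end{itemize}

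With rigidity in hand, $\XX_1(N)$ is the quotient of $\XX(N)$ by the freely acting group of matrices $\begin{psmallmatrix}1&*\\0&*\end{psmallmatrix}$. Note that extending a $V_1(N)$-structure to a full one is not functorial, so you do not get a morphism to Mumford's space directly. Properness of $\XX_1(N)$ then follows from that of $\XX(N)$ via the finite surjection $\XX(N)\to\XX_1(N)$. Alternatively, run the same trace argument on the finite image of inertia, using that $\det$ on $e\,\Ta_\ell(A)$ is the cyclotomic character and hence trivial on inertia.
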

\begin{proof}
  See \cite{DT1} or \cite{Buz2}.
\end{proof}
\begin{remark}
  In contrast to the theory of modular curves, one does not need to add cusps in the construction of the Shimura curves.
\end{remark}

We will denote again by $\XX = \XX(N), \XX_1(N)$ the base change over $\fpbar$. Over $\XX$ lies a universal false elliptic curve $\pi\st \cA \to \XX$ with level $V(N),V_1(N)$ structure. This is universal in the sense that for any false elliptic curve $(A,\iota,\alpha)$ with level $V(N),V_1(N)$ structure $\alpha$ over a scheme $S$ (over $\fpbar$), there exists a unique morphism $g\st S \to \XX$ such that $(A,\iota,\alpha)$ is the pullback of $\cA$ with its level structure. By this we mean that $A = \cA \times_{\XX} S$ with $\iota = \iota_{\mathrm{univ}} \times \id_S\st \OO_B \hookrightarrow \End(\cA \times_{\XX} S)$, and for $H= (\OO_B \otimes \ZZ/N\ZZ)$ or $(\ZZ/N\ZZ \times \ZZ/N\ZZ)$ as appropriate, the level structure $\alpha$ on $(A,\iota)$ is defined by the universal property of the fibre product $A= \cA \times_{\XX} S$ in the diagram below:

% $$\xymatrix{
%     H_{S} \ar[rr] \ar[rd]^{\alpha} \ar[dd]& & H_{\XX} \ar[rd]^{\alpha_{\mathrm{univ}}} \ar[dd] & \\
%     &  \cA \times_{\XX} S \ar[rr] \ar[ld]^s & & \cA \ar[ld]^\pi \\
%     S \ar[rr]_g & & \XX &}$$

\begin{center}
  \begin{tikzcd}[column sep=small, row sep=large]
    H_{S} \arrow[rr] \arrow[rd, "\alpha"] \arrow[dd] & & H_{\XX} \arrow[rd, "\alpha_{\mathrm{univ}}"] \arrow[dd] & \\
    & \cA \times_{\XX} S \arrow[rr, crossing over] \arrow[ld, "s"] & & \cA \arrow[ld, "\pi"] \\
    S \arrow[rr, "g"'] & & \XX &
  \end{tikzcd}
\end{center}

We wish to define modular forms on Shimura curves as global sections of (a power of) some line bundle $\om$ on $\XX$, or alternatively as rules assigning to $(A,\iota,\alpha,\omega)$ some value in $\fpbar$, where $\omega$ is a nonzero holomorphic differential on $A$.

%As false elliptic curves are abelian surfaces, in order to produce a line bundle we must exploit the additional structure coming from the action of $\OO_B$.

Let $s\st A \to S$ be a false elliptic curve. We have an action of $\OO_B$ on the topological space of $A$, and any $x \in \OO_B$, by definition, gives a morphism of sheaves $x^\#\st \OO_A \to x_* \OO_A$, and similarly $x^\#\st \Omega_{A/S}^1 \to x_*\Omega_{A/S}^1$. Since $\OO_B$ are endomorphisms of $A/S$, there is an induced morphism of sheaves on $S$,
\begin{equation*}
  s_* \Omega_{A/S}^1 \to s_*x_*\Omega_{A/S}^1 = s_*\Omega_{A/S}^1.
\end{equation*}
So $\OO_B^{\op}$ acts on the pushforward $s_*\Omega_{A/S}^1$. Tensoring with $\fp$, we have an action of $\Mat_2(\fp)$ on $s_*\Omega_{A/S}^1$. The point is that $\Mat_2(\fp)$ contains nonzero idempotents $e_1, e_2=1-e_1$, for example $\begin{psmallmatrix}1&0\\0&0\end{psmallmatrix}$ and $\begin{psmallmatrix}0&0\\0&1\end{psmallmatrix}$, that are conjugate. This allows us to take the 1-dimensional submodule $e_2 \cdot s_*\Omega_{A/S}^1$ of $s_*\Omega_{A/S}^1$ without losing any information.

\begin{notn}
  For our fixed choice of nonzero idempotents $e_1, e_2=1-e_1$ in $\Mat_2(\fp)$, define for any false elliptic curve $s\st A \to S$ the line bundle on $S$
  \begin{equation*}
    \om_{A/S} := e_2 \cdot s_* \Omega_{A/S}^1.
  \end{equation*}
  In the case of the universal false elliptic curve $\pi\st \cA\to\XX$, we write simply $\om$ for $\om_{\cA/\XX}$.
  Note that for a general $A\to S$, $\om_{A/S}$ is the pullback under the associated morphism $S \to \XX$ of $\om$.
\end{notn}

\begin{thm}\label{KS2}
  We have the Kodaira--Spencer isomorphism $\Omega_{\XX}^1 \cong \om^{\otimes 2}$.
\end{thm}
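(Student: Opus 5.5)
The plan is to derive the isomorphism from deformation theory of the universal false elliptic curve, combined with the principal polarisation and the idempotent decomposition from the Notation above.

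\textbf{Step 1: Gauss--Manin and Kodaira--Spencer.} Consider the de Rham cohomology $\mathcal{H} := H^1_{dR}(\cA/\XX)$, a locally free sheaf of rank $4$ carrying the Hodge filtration
\begin{equation*}
  0 \to \pi_*\Omega^1_{\cA/\XX} \to \mathcal{H} \to R^1\pi_*\OO_{\cA} \to 0
\end{equation*}
and the Gauss--Manin connection. The connection induces the classical Kodaira--Spencer map
\begin{equation*}
  \mathrm{KS}\st \pi_*\Omega^1_{\cA/\XX} \to R^1\pi_*\OO_{\cA}\otimes\Omega^1_{\XX}.
\end{equation*}
Since $\OO_B$ acts on $\cA$ by endomorphisms, it acts on $\mathcal{H}$ compatibly with the filtration and the connection, so $\mathrm{KS}$ is $\OO_B$-equivariant. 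After tensoring with $\fp$, it is equivariant for $\Mat_2(\fp)$, which is the case over $\fpbar$.

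\textbf{Step 2: using the polarisation and idempotents.} The canonical principal polarisation gives a perfect pairing on $\mathcal{H}$ under which $\pi_*\Omega^1$ is isotropic. This identifies $R^1\pi_*\OO_{\cA}$ with the dual of $\pi_*\Omega^1_{\cA/\XX}$, and the identification intertwines the $\OO_B$-action with the Rosati involution $*$. I will use it to rewrite $\mathrm{KS}$ as a pairing
\begin{equation*}
  \pi_*\Omega^1\otimes\pi_*\Omega^1\to\Omega^1_{\XX}
\end{equation*}
satisfying $\langle bx,y\rangle=\langle x,b^*y\rangle$. The moduli-theoretic input is that the tangent space of $\XX$ at a point equals the space of $\OO_B$-equivariant first-order deformations. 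By Grothendieck--Messing, this is the space of $\OO_B$-linear, polarisation-compatible maps $\omega_A\to\omega_A^\vee$.

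Now decompose $\pi_*\Omega^1 = e_1\pi_*\Omega^1\oplus e_2\pi_*\Omega^1$. Because $e_1$ and $e_2$ are conjugate in $\Mat_2(\fp)$, both summands are isomorphic to $\om$. Any $\Mat_2$-equivariant map out of the rank-2 module $\pi_*\Omega^1$ is then determined by its restriction to $e_2\pi_*\Omega^1$. The symmetric, $*$-compatible pairings should therefore reduce to a map $\om\otimes\om\to\Omega^1_\XX$.

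\textbf{Step 3: showing this map is an isomorphism.} I will check it fibre by fibre. At a geometric point, the deformation space of $(A,\iota,\alpha)$ is one-dimensional, since $\XX$ is a smooth curve. Both sides are line bundles, so the induced map is surjective on fibres precisely because $\mathrm{KS}$ is an isomorphism onto the $\OO_B$-equivariant part of $\mathrm{Hom}(\omega,\omega^\vee)$. Smoothness of $\XX$ and the representability theorem quoted above supply this.

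The main obstacle is Step 2, where one must match the Rosati involution $*$ (twisted by $t$) with the idempotents so that the pairing lands in $e_2\otimes e_2$ and is nondegenerate there. I would first reduce to a local computation, trivialising $\OO_B\otimes\fp\cong\Mat_2(\fp)$ and writing $t$ explicitly, and then verify that the equivariant symmetric pairings on the standard $\Mat_2$-module form a rank-one space generated by the $e_2$-component. If this becomes messy, I would instead cite the corresponding computation in \cite{Kas1} or \cite{Buz2}, which treat exactly this Kodaira--Spencer isomorphism for false elliptic curves.
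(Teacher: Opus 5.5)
The paper gives no argument here; its proof is just the citation \cite[Lemma 7]{DT1}. Your outline is the standard deformation-theoretic proof behind that citation, and Steps 1 and 3 are sound. Since $\XX$ is a fine moduli space with \'etale level structure, $\mathrm{KS}$ identifies $T_\XX$ with $\Hom_{\OO_B}(\pi_*\Omega^1,R^1\pi_*\OO_\cA)$, which has rank one by Schur's lemma. The symmetry condition you impose is automatic, because $t$ has trace zero.

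The gap is in the step you flag as the main obstacle: as you formulate it, it is false in general. From $\langle xb,y\rangle=\langle x,yb^*\rangle$ you get $\langle xe_2,ye_2\rangle=\langle x,ye_2e_2^*\rangle$, where $e_2^*=t^{-1}\bar e_2 t=t^{-1}e_1t$. So $\om=e_2\pi_*\Omega^1$ pairs perfectly with $e_2^*\pi_*\Omega^1$, not with itself. The restriction of the KS pairing to $\om\otimes\om$ vanishes identically whenever $e_2e_2^*=0$. With the paper's conventions, $t$ and $e_1,e_2$ are chosen independently, and this case does occur. If $-\delta$ is a square mod $p$, the fixed isomorphism $\OO_B\otimes\fp\cong\Mat_2(\fp)$ may make $t$ diagonal. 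Then $e_2^*=e_1$ and $\om$ is an isotropic line. So the explicit verification you plan (a rank-one space of pairings that is nondegenerate on the $e_2\otimes e_2$ component) would fail for such choices.

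The repair is short. Pair $\om$ against $e_2^*\pi_*\Omega^1$ instead. Then identify $e_2^*\pi_*\Omega^1\cong\om$ by right multiplication by some $g\in\GL_2(\fp)$ with $e_2^*=g^{-1}e_2g$; all rank-one idempotents of $\Mat_2(\fp)$ are conjugate. A cleaner route avoids the pairing on $\pi_*\Omega^1$ altogether and uses Morita equivalence directly:
\begin{itemize}
  \item You have $T_\XX\cong\Hom_{\OO_B}(\pi_*\Omega^1,R^1\pi_*\OO_\cA)\cong\om^{-1}\otimes e_2R^1\pi_*\OO_\cA$.
  \item The polarisation duality between $\pi_*\Omega^1$ and $R^1\pi_*\OO_\cA$ satisfies $\langle xb,f\rangle=\langle x,fb^*\rangle$. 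It therefore gives $e_2R^1\pi_*\OO_\cA\cong(e_2^*\pi_*\Omega^1)^\vee\cong\om^{-1}$.
  \item Hence $T_\XX\cong\om^{\otimes -2}$, and dualising gives $\Omega^1_\XX\cong\om^{\otimes 2}$, with no compatibility between $t$ and $e_2$ needed.
\end{itemize}
The identification $e_2R^1\pi_*\OO_\cA\cong\om^{-1}$ is also what the paper relies on, again via \cite[Lemma 7]{DT1}, when it uses the dual basis $\omega^\vee$ to define $\HH$.
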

\begin{proof}
  \cite[Lemma 7]{DT1}.
  %Compare the result with the Kodaira-Spencer isomorphism for modular curves - on the Shimura curve there are no cusps.
\end{proof}

Having described the moduli interpretation of Shimura curves, we may now define modular forms on $\XX = \XX(N), \XX_1(N)$. As there are no cusps on Shimura curves, we do not require a condition of holomorphy at the cusps. For the same reason, there is no notion of the $q$-expansion of a modular form on a Shimura curve.

\begin{defn}
  A modular form with respect to $B^\times$, of weight $k \geq 0$ and level $V=V(N),V_1(N)$ over a ring $R_0$ on which $\delta N$ is invertible, is a rule assigning a value $f(A,\iota,\alpha,\omega) \in R$ to each isomorphism class of quadruples $(A,\iota,\alpha,\omega)$, where $R$ is an $R_0$-algebra, $(A,\iota,\alpha)$ is a false elliptic curve over $R$ with level $V$ structure, and $\omega$ is a basis for $\om_{A/R}$, satisfying:
  \begin{enumerate}[label=(\roman*)]
    \item $f(A/R,\iota,\alpha,\mu\omega) = \mu^{-k}f(A/R,\iota,\alpha,\omega)$ for any $\mu \in R^\times$.
    \item (Base change) If $\phi\st R \to R'$ is an $R_0$-algebra homomorphism, then $f(A/R',\iota,\alpha ,\omega_{R'}) = \phi(f(A/R,\iota,\alpha,\omega))$.
  \end{enumerate}
  Equivalently, $f$ is a global section of $H^0(\XX_{R_0} , \om^{\otimes k})$.
\end{defn}

From now on, we will consider the case $R_0= \fpbar$, and denote the space of these mod $p$ modular forms with respect to $B^\times$, of weight $k$ and level $V=V(N),V_1(N)$, by $M_k$ (for convenience we will omit the level $V$ and the quaternion algebra $B$).

We also define Hecke operators on modular forms on Shimura curves. For classical modular forms, as defined in \cite{Kat1}, the $\ell$-th Hecke operator is defined by indexing over degree $\ell$ isogenies $E \to E/C$, where $C$ is a subgroup scheme isomorphic to $\ZZ/\ell \ZZ$. For false elliptic curves $A$, we may only quotient by finite flat subgroup schemes stable under the action of $\OO_B$.

%Hence the natural analogue of Hecke operators for modular forms on Shimura curves should index over finite flat subgroup schemes $G \leq A$ isomorphic to $\ZZ/\ell\ZZ \times \ZZ/\ell\ZZ$ which are stable under the action of $\OO_B \otimes \ZZ/\ell \ZZ \cong \mathrm{M}_2(\ZZ/\ell \ZZ)$. Given that there exists an isomorphism $A[\ell] \cong \mathrm{M}_2(\ZZ/\ell\ZZ)$ as $\OO_B \otimes \ZZ/\ell \ZZ \cong \mathrm{M}_2(\ZZ/\ell\ZZ)$-modules, it is not hard to check that there are $\ell+1$ subgroups of $A[\ell]$ isomorphic to $\ZZ/\ell\ZZ \times \ZZ/\ell\ZZ$ that are stable under $\mathrm{M}_2(\ZZ/\ell\ZZ)$. These are generated under the action of $\mathrm{M}_2(\ZZ/\ell \ZZ)$ by the matrices $\begin{psmallmatrix}1&i \\0&0 \end{psmallmatrix}$ and $\begin{psmallmatrix}0&1 \\0&0 \end{psmallmatrix}$ for $0 \leq i \leq \ell-1$.

\begin{defn}\label{Hecke def 2}
  For a prime $\ell$ (recall, not dividing $p\delta N$), we define the $\ell$-th Hecke operator $T_\ell$ on $M_k$ by the following formula:
  \begin{equation*}T_\ell f (A,\iota,\alpha,\omega) = \frac{1}{\ell}\cdot \sum\limits_{\parbox{8em}{\scriptsize$\eta\st A \to A/G$ \\ $G \cong \ZZ/\ell\ZZ \times \ZZ/\ell\ZZ$ \\ as $\OO_B \otimes \ZZ/\ell\ZZ$-modules }} f(A/G, \iota, \alpha_\eta, (\eta^t)^*\omega).
  \end{equation*}
  % I've changed \ell^k-1 to \ell^-1 in line with the definition for \mathcal M_k
  Here $\eta^t$ was earlier defined as the composition of the dual isogeny $\eta^\vee$ with the principal polarisations, and we set $\alpha_{\eta}=\eta \circ \alpha$, which is a level structure on $A/G$ because $\ell$ is coprime to $N$. Note that $(\eta^t)^* \omega \in \om_{A/G}$ because $\eta^t$ commutes with $\OO_B$, and in particular with the idempotents $e_1,e_2$.
\end{defn}

We now generalise the Hasse invariant that plays a central role in the classical theory of mod $p$ modular forms. Let $(A,\iota)$ be a false elliptic curve over an $\fpbar$-algebra $R$, and let $\omega$ be a basis of $\om_{A/R}$. By \cite[Lemma 7]{DT1}, $\omega$ determines a dual basis $\omega^\vee \in e_2 \cdot H^1(A,\OO_A)$. Consider the absolute Frobenius morphism $\fabs\st A \to A$, which is the identity on points, and on structure sheaves is given by
\begin{align*}
  \fabs\st \OO_A & \to  \OO_A   \\
  f              & \mapsto f^p.
\end{align*}
This induces an $\fp$-linear map $\fabs^*\st H^1(A,\OO_A) \to H^1(A,\OO_A)$.

\begin{defn}
  Define the {\it Hasse invariant} $\HH$, a function on triples $(A,\iota,\omega)$, by setting $\HH(A,\iota,\omega) \in R$ determined by
  \begin{equation*}
    \fabs^*(\omega^\vee) = \HH(A,\iota,\omega) \omega^\vee \in H^1(A,\OO_A).
  \end{equation*}
  Note that $\fabs^*(\omega^\vee) \in e_2 \cdot H^1(A,\OO_A)$ because $\fabs$ commutes with $\OO_B$.
\end{defn}

\begin{lemma}
  The Hasse invariant $\HH$ is a modular form over $\fpbar$ with respect to $B^\times$, of weight $p-1$ and level 1.
\end{lemma}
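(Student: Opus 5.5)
We need to check the two axioms of a modular form for $\HH$: the weight $p-1$ transformation law (i), and compatibility with base change (ii). Level 1 means no level structure is used, so $\HH$ gives a form of every level $V$ by ignoring $\alpha$. We would then add a remark that the rule patches to a global section of $\om^{\otimes(p-1)}$.

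\emph{Weight.} Let $\mu\in R^\times$. We first check that duality under the pairing of \cite[Lemma 7]{DT1} between $\om_{A/R}=e_2\cdot s_*\Omega^1_{A/R}$ and $e_2\cdot H^1(A,\OO_A)$ is $R$-bilinear. Granting this, $(\mu\omega)^\vee=\mu^{-1}\omega^\vee$. Since $\fabs^*$ is $p$-linear, meaning $\fabs^*(r x)=r^p\fabs^*(x)$ for $r\in R$, we get
\begin{equation*}
  \fabs^*\bigl((\mu\omega)^\vee\bigr)=\mu^{-p}\fabs^*(\omega^\vee)=\mu^{-p}\HH(A,\iota,\omega)\,\omega^\vee
  =\mu^{1-p}\HH(A,\iota,\omega)\,(\mu\omega)^\vee .
\end{equation*}
Hence $\HH(A,\iota,\mu\omega)=\mu^{-(p-1)}\HH(A,\iota,\omega)$. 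We also note that $\HH$ is well defined. First, $e_2\cdot H^1(A,\OO_A)$ is free of rank one with basis $\omega^\vee$; this follows from the $\Mat_2(\fp)$-decomposition and the Hodge decomposition (\cite[Lemma 7]{DT1}). Second, $\fabs^*$ preserves it because $\fabs$ commutes with $\OO_B$, hence with $e_2$, as remarked in the definition. Finally, $\HH$ depends only on the isomorphism class of $(A,\iota,\omega)$, since an isomorphism $\phi$ commutes with $\fabs$ and satisfies $\phi^*(\omega'^\vee)=\omega^\vee$ whenever $\phi^*\omega'=\omega$.

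\emph{Base change.} Let $\phi\st R\to R'$ and set $A'=A\otimes_R R'$. The step we expect to be the main obstacle is the identification $H^1(A',\OO_{A'})\cong H^1(A,\OO_A)\otimes_R R'$. We would justify it by cohomology and base change: $A$ is an abelian scheme, so $R^1s_*\OO_A$ is locally free and its formation commutes with arbitrary base change. We would work Zariski-locally on $\Spec R$, or reduce to the case where $\om$ is trivial. Under this identification, applying $e_2$ commutes with base change, and so does the Serre/Hodge duality of \cite[Lemma 7]{DT1}; hence $(\omega_{R'})^\vee=\omega^\vee\otimes 1$. The absolute Frobenius commutes with every morphism of $\fp$-schemes, so $\fabs^*_{A'}$ corresponds to $\fabs^*_A\otimes\phi$ on $H^1(A,\OO_A)\otimes_R R'$, and $\fabs^*_{A'}(\omega^\vee\otimes 1)=\fabs^*_A(\omega^\vee)\otimes1$. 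Therefore $\HH(A',\iota,\omega_{R'})=\phi(\HH(A,\iota,\omega))$.

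\emph{Global section.} Finally, $\HH$ is a global section of $\om^{\otimes(p-1)}$. By the weight computation, on the universal curve over any open set where $\om$ is trivialised by $\omega$, the element $\HH(\cA,\iota,\omega)\,\omega^{\otimes(p-1)}$ does not depend on the choice of $\omega$. These local sections therefore glue, so $\HH$ lies in $M_{p-1}$ for every level $V$.
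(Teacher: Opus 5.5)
Your weight computation is exactly the paper's proof, which checks only the transformation law $\HH(A,\iota,\mu\omega)=\mu^{1-p}\HH(A,\iota,\omega)$ using the $p$-linearity of $\fabs^*$; your extra checks of well-definedness, base change and gluing are correct supplements that the paper leaves implicit. One small slip: on $H^1(A,\OO_A)\otimes_R R'$ the absolute Frobenius acts by $x\otimes r\mapsto \fabs_A^*(x)\otimes r^p$, not by ``$\fabs^*_A\otimes\phi$'' (which does not type-check), but this does not affect the identity $\fabs^*_{A'}(\omega^\vee\otimes 1)=\fabs^*_A(\omega^\vee)\otimes 1$ that you actually use.
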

\begin{proof}
  We compute for $(A,\iota)$ defined over an $\fpbar$-algebra $R$ and $\mu \in R^\times$,
  \begin{align*}\HH(A,\iota,\mu \omega) \cdot \mu^{-1} \omega^\vee & = \fabs^*(\mu^{-1} \omega^\vee)                  \\
                                                                 & = \mu^{-p} \fabs^*(\omega^\vee)                  \\
                                                                 & = \mu^{-p} \HH(A,\iota,\omega) \cdot \omega^\vee
  \end{align*}
  so $\HH(A,\iota,\mu \omega) = \mu^{1-p}\HH(A,\iota,\omega)$.
\end{proof}

%In the letter \cite{serre-letters} of Serre, a key property of the classical mod $p$ Hasse invariant used is that multiplication by it preserves mod $p$ Hecke eigenforms (nonzero mod $p$ modular forms $f$ that are eigenvectors for the Hecke operators at all primes not dividing $p$ or the level of $f$) and their eigenvalues. One can see this from the fact that the $q$-expansion of the classical Hasse invariant is 1.

We define mod $p$ Hecke eigenforms with respect to $B^\times$ as mod $p$ modular forms with respect to $B^\times$ that are eigenvectors for all the Hecke operators $T_\ell$ we defined above. In the setting of Shimura curves, modular forms do not have $q$-expansions, so we must provide a different proof that multiplication by the Hasse invariant with respect to $B^\times$ preserves Hecke eigenvalues.

\begin{prop}\label{Hasse quotient 2}
  Let $\HH$ be the Hasse invariant mod $p$ with respect to $B^\times$. Let $f$ be a weight $k$ mod $p$ modular form  with respect to $B^\times$ of level $V$. Then $f$ is a Hecke eigenform if and only if $\HH f$ is a Hecke eigenform. In this case, they have the same Hecke eigenvalues.
\end{prop}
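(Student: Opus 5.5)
The plan is to prove the stronger statement that $\HH$ commutes with every Hecke operator. Precisely, I want to show
\begin{equation*}
  T_\ell(\HH f) = \HH \cdot T_\ell f \quad\text{for all } f\in M_k,
\end{equation*}
where on the left $T_\ell$ acts in weight $k+p-1$. Granting this, the proposition follows. If $T_\ell f=a_\ell f$, then $T_\ell(\HH f)=a_\ell \HH f$. Conversely, if $T_\ell(\HH f)=a_\ell\HH f$, then $\HH(T_\ell f-a_\ell f)=0$. Since $\XX$ is a smooth connected curve over $\fpbar$ (or we argue component by component) and $\HH$ is not identically zero (the ordinary locus is dense), multiplication by $\HH$ is injective on $M_k=H^0(\XX,\om^{\otimes k})$. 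Hence $T_\ell f=a_\ell f$. Nonvanishing of $\HH$ is also needed so that $\HH f\neq 0$ whenever $f\neq0$.

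By the definition of $T_\ell$, the commutation reduces to one identity for each $\OO_B$-stable subgroup $G\cong\ZZ/\ell\ZZ\times\ZZ/\ell\ZZ$ with quotient $\eta\colon A\to A/G$:
\begin{equation*}
  \HH\bigl(A/G,\iota,(\eta^t)^*\omega\bigr)=\HH(A,\iota,\omega).
\end{equation*}
With this identity in hand, each summand of $T_\ell(\HH f)(A,\iota,\alpha,\omega)$ equals $\HH(A,\iota,\omega)\, f(A/G,\iota,\alpha_\eta,(\eta^t)^*\omega)$. The factor $\HH(A,\iota,\omega)$ then comes out of the sum, giving $\HH\cdot T_\ell f$.

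To prove the identity, set $\psi=\eta^t\colon A/G\to A$, an isogeny of false elliptic curves of false degree $\ell^2$, prime to $p$. Pullback $\psi^*$ on $H^1(-,\OO)$ commutes with absolute Frobenius, since $\fabs$ commutes with every morphism of schemes. It also commutes with the $\OO_B$-action and hence with $e_2$. Since $\deg\psi$ is prime to $p$, $\psi$ is étale, so $\psi^*$ is an isomorphism on both $e_2\cdot s_*\Omega^1$ and $e_2\cdot H^1(\OO)$. The main obstacle is compatibility with the duality of \cite[Lemma 7]{DT1}: I need $\psi^*(\omega^\vee)=c\,\bigl(\psi^*\omega\bigr)^\vee$ for a constant $c\in\fp^\times$, or at least $c\in R^\times$ with $c^{p-1}=1$. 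My approach is to use the fact that the pairing $e_2 H^0(\Omega^1)\times e_2 H^1(\OO)\to R$ comes from the principal polarisation and Serre duality. Pulling back along $\psi$ multiplies this pairing by the false degree $\ell$ or $\ell^2$, which is a unit in $\fp$. So $\psi^*\omega^\vee=\ell^{\pm1}(\psi^*\omega)^\vee$, and therefore $c\in\fp^\times$.

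With $c$ in hand, I compute
\begin{equation*}
  \fabs^*\bigl((\psi^*\omega)^\vee\bigr)=\fabs^*\bigl(c^{-1}\psi^*\omega^\vee\bigr)=c^{-p}\psi^*\fabs^*\omega^\vee=c^{-p}\HH(A,\iota,\omega)\,\psi^*\omega^\vee=c^{1-p}\HH(A,\iota,\omega)\,(\psi^*\omega)^\vee.
\end{equation*}
Since $c^{1-p}=1$, this gives the desired identity. If the duality compatibility turns out to be delicate, there is a fallback that avoids it. Any linear isomorphism $\psi^*\colon e_2H^1(A,\OO)\to e_2H^1(A/G,\OO)$ commuting with the $p$-linear Frobenius intertwines the Hasse invariants up to a factor $u^{p-1}$, where $u$ is the unit comparing $\psi^*\omega^\vee$ with $(\psi^*\omega)^\vee$. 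I would then compute $u$ directly as a false-degree constant, using functoriality of the Kodaira--Spencer/duality isomorphism of Theorem~\ref{KS2} under étale isogenies.
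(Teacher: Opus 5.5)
Your proposal is correct and follows the paper's route. You reduce to the identity $\HH(A/G,\iota,(\eta^t)^*\omega)=\HH(A,\iota,\omega)$, which holds because absolute Frobenius commutes with the isogeny $\eta^t$, and you get the converse from $\HH$ being nonzero on a dense set. Your extra check that $(\eta^t)^*\omega^\vee$ and $((\eta^t)^*\omega)^\vee$ differ by the false degree of $\eta^t$, a scalar in $\fp^\times$ killed by the $(p-1)$-th power, makes explicit a step the paper's commutative-diagram argument leaves implicit. One small correction: $\eta^t$ has false degree $\ell$, not $\ell^2$, which does not affect the argument.
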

\begin{proof}
  The Hecke operator $T_\ell$ is defined in terms of quadruples $(A,\iota,\alpha,\omega)$ by the formula
  \begin{equation*}
    T_\ell f (A,\iota,\alpha,\omega) = \frac{1}{\ell}\cdot \sum\limits_{\parbox{8em}{\scriptsize$\eta\st A \to A/G$ \\ $G \cong \ZZ/\ell\ZZ \times \ZZ/\ell\ZZ$ \\ as $\OO_B \otimes \ZZ/\ell\ZZ$-modules }} f(A/G, \iota, \alpha_\eta, (\eta^t)^*\omega)
  \end{equation*}
  and so acts on $\HH f$ by
  \begin{equation*}
    T_\ell (\HH f) (A,\iota,\alpha,\omega) = \frac{1}{\ell}\cdot \sum\limits_{\parbox{8em}{\scriptsize$\eta\st A \to A/G$ \\ $G \cong \ZZ/\ell\ZZ \times \ZZ/\ell\ZZ$ \\ as $\OO_B \otimes \ZZ/\ell\ZZ$-modules }} f(A/G, \iota, \alpha_\eta, (\eta^t)^*\omega) \cdot \HH(A/G, \iota, (\eta^t)^*\omega).
  \end{equation*}
  %where we dropped the level structure from $\HH$ because the Hasse invariant is a level 1 modular form, so does not depend on the level.
  We claim that
  \begin{equation}\label{eqn}
    \HH(A/G, \iota, (\eta^t)^*\omega)=\HH(A,\iota,\omega) \text{ for any isogeny $\eta\st A \to A/G$ of the form above. }
  \end{equation}
  Given \eqref{eqn}, if $f$ is an eigenvector for $T_\ell$, we see that $\HH f$ is an eigenvector for $T_\ell$ with the same eigenvalue. For the converse, if $\HH f$ is an eigenvector for $T_\ell$ with eigenvalue $a_\ell$, then for any quadruple $(A,\iota,\alpha,\omega)$ we have, given the claim \eqref{eqn},
  \begin{equation*}
    a_\ell \cdot \HH(A,\iota,\omega)f(A,\iota,\alpha,\omega) = T_\ell(\HH f)(A,\iota,\alpha,\omega) = \HH(A,\iota,\omega) \cdot T_\ell f(A,\iota,\alpha,\omega).
  \end{equation*}
  We deduce that $T_\ell f$ and $a_\ell f$ agree away from the zeros of the Hasse invariant. Since the Hasse invariant is nonzero there can only be finitely many of them, so $T_\ell f$ and $a_\ell f$ agree on a dense subset of the Shimura curve, so must coincide. It remains to prove the claim \eqref{eqn} of the Hasse invariant; we can express it as the requirement that the following diagram commutes
  % $$\xymatrix{e_2 \cdot H^1(A,\OO_{A}) \ar[r]^{(\eta^t)^*} \ar[d]^{F_{abs}^*} & e_2 \cdot H^1(A/G,\OO_{A/G}) \ar[d]^{F_{abs}^*} \\
  %     e_2 \cdot H^1(A,\OO_{A}) \ar[r]^{(\eta^t)^*} & e_2 \cdot H^1(A/G,\OO_{A/G}).}$$
  \begin{center}
    \begin{tikzcd}[column sep=large]
      e_2 \cdot H^1(A,\OO_{A}) \arrow[r, "(\eta^t)^*"] \arrow[d, "\fabs^*"] & e_2 \cdot H^1(A/G,\OO_{A/G}) \arrow[d, "\fabs^*"] \\
      e_2 \cdot H^1(A,\OO_{A}) \arrow[r, "(\eta^t)^*"] & e_2 \cdot H^1(A/G,\OO_{A/G}).
    \end{tikzcd}
  \end{center}

  This follows from the fact that the absolute Frobenius morphisms $\fabs$ commute with any isogeny, including $\eta^t$ (the absolute Frobenius is the identity on points, and the $p$-power map on structure sheaves commutes with morphisms of sheaves).
\end{proof}

The last proposition indicates that, insofar as the study of the Hecke eigenvalues on the spaces $M_k$ is concerned, it suffices to consider the quotients $W_k:=M_k/\HH M_{k-(p-1)}$.
We will do this after a detour through definite quaternion algebras.

\section{Algebraic modular forms for $D^\times$ (mod $p$)}\label{quat_alg}

In this section we switch our focus to the definite quaternion algebra $D$ over $\QQ$ with discriminant $p\delta$.
Following Serre and Gross, we consider algebraic modular forms on the algebraic group $D^\times$, which are locally constant functions on a finite double coset space of the adelic points of $D^\times$.
After defining these spaces, we delve into their structure as Hecke modules, guided by observations made by Serre~\cite{serre-letters} in the case $\delta=1$.

Before looking at the algebraic group $D^\times$ we give a detailed description of the situation in the case of the multiplicative group, as this will be required later.

\subsection{Review of locally constant functions for $\QQ^\times$}

The strong approximation theorem gives a decomposition of the ring of finite adeles over $\QQ$:
\begin{equation*}
  \af^\times = \QQ^\times_{>0} \cdot \prod_\ell \ZZ_\ell^\times\qquad\text{with }\QQ^\times_{>0} \cap\prod_\ell \ZZ_\ell^\times = 1.
\end{equation*}
\begin{lemma}
  For any $M\in\NN$ there is a canonical group homomorphism
  \begin{equation*}
    \theta\st \af^\times/\QQ^\times_{>0} \to \prod_\ell \left(\ZZ_\ell/\ell^{v_\ell(M)}\ZZ_\ell\right)^\times = \big(\ZZ/M\ZZ\big)^\times
  \end{equation*}
  with kernel
  \begin{equation*}
    \ker\theta =\prod_\ell \ZZ_\ell^\times(M)=:U(M),
    \quad\text{where}\quad
    \ZZ_\ell^\times(M)  =\left\{x\in\ZZ_\ell^\times\st x\equiv 1\mod{\ell^{v_\ell(M)}}\right\}.
  \end{equation*}
\end{lemma}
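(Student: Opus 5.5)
We construct $\theta$ via the strong approximation decomposition. Every $x\in\af^\times$ can be written uniquely as $x=q\cdot u$ with $q\in\QQ^\times_{>0}$ and $u=(u_\ell)_\ell\in\prod_\ell\ZZ_\ell^\times$. Concretely, $q=\prod_\ell \ell^{v_\ell(x_\ell)}$, and this product is finite because $x_\ell\in\ZZ_\ell^\times$ for almost all $\ell$. Uniqueness is exactly the statement $\QQ^\times_{>0}\cap\prod_\ell\ZZ_\ell^\times=1$. It follows that the map $\prod_\ell\ZZ_\ell^\times\to\af^\times/\QQ^\times_{>0}$ is a group isomorphism, since $\af^\times$ is abelian. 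We may therefore identify $\af^\times/\QQ^\times_{>0}$ with $\prod_\ell\ZZ_\ell^\times$, and this identification is canonical because the decomposition is unique.

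Next we compose with the reduction maps. For each $\ell$ there is the surjective homomorphism $\ZZ_\ell^\times\to(\ZZ_\ell/\ell^{v_\ell(M)}\ZZ_\ell)^\times$. This target is trivial when $\ell\nmid M$, so the product of these maps gives a homomorphism
\begin{equation*}
  \prod_\ell\ZZ_\ell^\times\to\prod_{\ell\mid M}\left(\ZZ_\ell/\ell^{v_\ell(M)}\ZZ_\ell\right)^\times .
\end{equation*}
The Chinese remainder theorem, together with $\ZZ_\ell/\ell^{n}\ZZ_\ell\cong\ZZ/\ell^n\ZZ$, identifies this product of unit groups with $(\ZZ/M\ZZ)^\times$. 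We define $\theta$ to be the composite of the identification above with this map. Explicitly,
\begin{equation*}
  \theta(x)=\big(u_\ell \bmod \ell^{v_\ell(M)}\big)_\ell, \qquad u_\ell=x_\ell\,\ell^{-v_\ell(x_\ell)}\prod_{\ell'\neq\ell}\ell'^{-v_{\ell'}(x_{\ell'})}.
\end{equation*}
It is a homomorphism because it is a composite of homomorphisms.

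The kernel is computed componentwise. An element $u$ lies in the kernel exactly when $u_\ell\equiv1\pmod{\ell^{v_\ell(M)}}$ for every $\ell$, which is the condition defining $\prod_\ell\ZZ_\ell^\times(M)=U(M)$. For $\ell\nmid M$ the condition is vacuous, so $\ZZ_\ell^\times(M)=\ZZ_\ell^\times$. Here $U(M)$ is viewed inside $\af^\times/\QQ^\times_{>0}$ through the identification of the first step, and it embeds there by the uniqueness part of strong approximation. The map $\theta$ is in fact surjective, because each reduction map is surjective, although the lemma does not ask for this.

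The only point that needs care is the bookkeeping of the identification $\af^\times/\QQ^\times_{>0}\cong\prod_\ell\ZZ_\ell^\times$: we must check that the unit parts multiply correctly, which holds because the decomposition is unique and the group is abelian. Everything else is routine.
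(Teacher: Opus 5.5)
Your proof is correct, but it is organised differently from the paper's. You use the uniqueness of the factorisation $x=qu$ to identify $\af^\times/\QQ^\times_{>0}$ with $\prod_\ell\ZZ_\ell^\times$ and then reduce the unit part. As a result, well-definedness, multiplicativity and the equality $\ker\theta=U(M)$ all come for free.

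The paper instead works on $\af^\times$ itself. It prescribes $\theta$ on the local pieces $\iota_{\ell_j}(x)$, $x\in\ZZ_{\ell_j}^\times$, and on the uniformisers $\iota_\ell(\ell)=\ell\cdot\alpha$. Its actual content is the case table verifying that each diagonally embedded prime $q$ maps to $1$, i.e.\ that $\theta$ descends to the quotient. The kernel assertion is left implicit there.

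The two maps coincide. Your formula gives $\theta(\iota_{\ell_j}(\ell_j))=(1/\ell_j,\dots,1,\dots,1/\ell_j)$, and $\theta(\iota_\ell(\ell))=\ell^{-1}\bmod M$ for $\ell\nmid M$; these are exactly the values recorded in the paper.

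Your route is shorter and proves the kernel statement outright. It also makes precise what the paper's ``determined by'' leaves tacit, namely triviality on $\prod_{\ell\nmid M}\ZZ_\ell^\times$ and the values on $\iota_\ell(\ell)$ for $\ell\nmid M$. The paper's route has the advantage of being phrased as ``local values plus triviality on the global rationals''. That is exactly the form in which $\theta$ is used afterwards, as the Artin map evaluated on $\iota_\ell(\ell)$ and $\iota_p(\Nm_p(\mu))$ in \autoref{hecke_prod} and \autoref{psiD}.
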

\begin{remark}
  Giving $(\ZZ/M\ZZ)^\times$ the discrete topology, any continuous homomorphism $\AA^\times/\QQ^\times \to (\ZZ/M\ZZ)^\times$ factors through $(\{\pm 1\} \times \af^\times)/ \QQ^\times \cong \af^\times/\QQ^\times_{>0}$, where $\RR^\times \to \{\pm 1\}$ is the sign map.
\end{remark}
\begin{proof}
  Let $\iota_\ell\st \QQ_\ell^\times\to\af^\times$ denote the canonical inclusion:
  \begin{equation*}
    \big(\iota_\ell(x)\big)_q =
    \begin{cases}
      1 & \text{if }q\neq\ell \\
      x & \text{if }q=\ell
    \end{cases}
    \qquad \text{for any }x\in\QQ_\ell^\times.
  \end{equation*}
  In the special case $x=\ell$, we write
  \begin{equation*}
    \iota_\ell(\ell)=\ell\cdot\alpha\in \QQ^\times\cdot\prod_q \ZZ_q^\times=\af^\times,
    \qquad\text{where }
    \alpha_q=\begin{cases}
      \frac{1}{\ell} & \text{if }q\neq\ell \\
      1              & \text{if }q=\ell.
    \end{cases}
  \end{equation*}
  Factoring $M=\ell_1^{e_1}\dots\ell_r^{e_r}$, the map $\theta$ is determined by the property that for $j=1,\dots, r$:
  \begin{align*}
    \theta(\iota_{\ell_j}(x))                       & = x\mod{\ell_j^{e_j}}                  &  & = \Big(1,\dots,1,x,1,\dots,1\Big)\qquad\qquad \text{if }x\in\ZZ_{\ell_j}^\times                \\
    \theta(\iota_{\ell_j}(\ell_j)) = \theta(\alpha) & = \frac{1}{\ell_j}\mod{M/\ell_j^{e_j}} &  & = \Big(\frac{1}{\ell_j},\dots,\frac{1}{\ell_j},1,\frac{1}{\ell_j},\dots,\frac{1}{\ell_j}\Big).
  \end{align*}
  To see that the diagonally embedded $\QQ^\times_{>0}\subseteq\af^\times$ is in the kernel of $\theta$, it suffices to consider an arbitrary prime $q$ (since they generate the group $\QQ^\times_{>0}$, and $\theta$ is a group homomorphism).

  \begin{center}
    \setlength{\tabcolsep}{12pt}
    \begin{tabular}{ll}
      \toprule
      $q=\ell_j$, WLOG $j=1$ & $q\neq \ell_1,\dots,\ell_r$                               \\ \midrule
      $\theta(\iota_{\ell_1}(q)) = \Big(1,\frac{1}{q},\frac{1}{q},\dots,\frac{1}{q}\Big)$
                             &
      $\theta(\iota_{\ell_1}(q))  = \Big(q,1,\dots,1\Big)$                               \\
      $\theta(\iota_{\ell_2}(q)) = \Big(1,q,1,\dots,1\Big)$
                             &
      \vdots                                                                             \\
      \vdots
                             &
      $\theta(\iota_{\ell_r}(q))  = \Big(1,\dots,1,q\Big)$                               \\

      $\theta(\iota_{\ell_r}(q))  = \Big(1,1,\dots,1,q\Big)$
                             &
      $\theta(\iota_{q}(q))       = \Big(\frac{1}{q},\dots,\frac{1}{q},\frac{1}{q}\Big)$ \\
      $\theta(\iota_{\ell}(q))    = \Big(1,1\dots,1,1\Big)\quad\text{at all other }\ell$
                             &
      $\theta(\iota_{\ell}(q))    = \Big(1,\dots,1,1\Big)\quad\text{at all other }\ell$. \\
      \bottomrule
    \end{tabular}
  \end{center}

  So in all cases $\theta(q)=1$.
\end{proof}

% Composing $\theta$ with a Dirichlet character $(\ZZ/M\ZZ)^\times \to \fpbar^\times$ gives a locally constant group homomorphism $\af^\times/\QQ^\times_{>0} \to \fpbar^\times$ that extends to a locally constant group homomorphism on $\AA^\times/\QQ^\times$.

We recall how a continuous character $\psi\st\gq\to\fpbar^\times$ gives rise to a Dirichlet character.
Since the image of $\psi$ is abelian, it factors through the abelianisation of $\gq$; since it is continuous, the fixed field of $\ker\psi$ is a finite abelian extension $K/\QQ$.
By the Kronecker--Weber Theorem, $K\subseteq\QQ(\zeta_M)$ for some $M\geq 1$, in which case we say that $\psi$ is defined modulo $M$, and we can think of $\psi$ as being a composition
\begin{equation*}
  \Gal\big(\QQ(\zeta_M)/\QQ\big)\to\Gal(K/\QQ)\to\fpbar^\times.
\end{equation*}
Since the Galois group of $\QQ(\zeta_M)/\QQ$ is isomorphic to $(\ZZ/M\ZZ)^\times$, $\psi$ can be identified with a (not necessarily primitive) Dirichlet character modulo $M$ that we also denote
\begin{equation*}
  \psi\st (\ZZ/M\ZZ)^\times\to\fpbar^\times.
\end{equation*}
The integers $M$ modulo which $\psi$ is defined form an ideal generated by the conductor $\cond(\psi)$.
\begin{lemma}\label{conductor_valuation}
  The conductor of $\psi$ satisfies $v_p(\cond(\psi))\in\{0,1\}$.
\end{lemma}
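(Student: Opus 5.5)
The plan is to show that the $p$-part of the conductor of $\psi$ is at most $p$, by comparing orders: the image of $\psi$ has order prime to $p$, while the kernel of reduction $(\ZZ/p^e\ZZ)^\times\to(\ZZ/p\ZZ)^\times$ is a $p$-group.

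First, the image of $\psi$ is a finite subgroup of $\fpbar^\times$. Every finite subgroup of $\fpbar^\times$ lies in some $\FF_{p^r}^\times$, so it has order dividing $p^r-1$, which is prime to $p$. Hence any element of $p$-power order in the source of $\psi$ lies in $\ker\psi$.

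Next, I regard $\psi$ as a Dirichlet character modulo $M$ and write $M=p^eM'$ with $p\nmid M'$ and $e\geq 2$; if $e\leq 1$ there is nothing to show. By the Chinese remainder theorem,
\begin{equation*}
  (\ZZ/M\ZZ)^\times\cong(\ZZ/p^e\ZZ)^\times\times(\ZZ/M'\ZZ)^\times .
\end{equation*}
The kernel $K_e$ of the reduction $(\ZZ/p^e\ZZ)^\times\to(\ZZ/p\ZZ)^\times$ consists of the classes $\equiv 1\pmod p$, so it has order $p^{e-1}$. By the first step, $\psi$ is trivial on $K_e\times\{1\}$.

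It follows that $\psi$ factors through the quotient $(\ZZ/p\ZZ)^\times\times(\ZZ/M'\ZZ)^\times\cong(\ZZ/pM'\ZZ)^\times$. So $\psi$ is defined modulo $pM'$, and hence $\cond(\psi)$ divides $pM'$, giving $v_p(\cond(\psi))\leq 1$.

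There is also an equivalent Galois-theoretic phrasing. The fixed field of $\ker\psi$ has degree prime to $p$ over $\QQ$. The subextension $\QQ(\zeta_{p^e})/\QQ(\zeta_p)$ has degree $p^{e-1}$. So the fixed field of $\ker\psi$ is contained in $\QQ(\zeta_{pM'})$, which is the fixed field of $\Gal(\QQ(\zeta_M)/\QQ(\zeta_{pM'}))\cong K_e$.

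I expect no serious obstacle. The only care needed is the identification of the CRT quotient with $(\ZZ/pM'\ZZ)^\times$ compatibly with the identification $\Gal(\QQ(\zeta_M)/\QQ)\cong(\ZZ/M\ZZ)^\times$. This holds because restriction from $\QQ(\zeta_M)$ to $\QQ(\zeta_{pM'})$ corresponds to reduction mod $pM'$.
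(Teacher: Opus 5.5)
Your proof is correct and is essentially the paper's argument. Both rest on the fact that the image of $\psi$ in $\fpbar^\times$ has order prime to $p$, so $\psi$ kills the $p$-group $\ker\big((\ZZ/p^n\ZZ)^\times\to(\ZZ/p\ZZ)^\times\big)$ and is therefore defined modulo $p$ times the prime-to-$p$ part of the modulus. The paper gets there via a primitive root modulo $p^n$ (using $p>2$) and the computation $\gcd\big((p-1)p^{n-1},p^m-1\big)=p-1$, whereas you use directly that this kernel has order $p^{n-1}$, which is slightly cleaner and does not need cyclicity.
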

\begin{proof}
  Suppose $v_p(\cond(\psi))>0$ and write $\cond(\psi)=p^n M_0$ with $\gcd(p,M_0)=1$.
  The Chinese Remainder Theorem gives two group homomorphisms
  \begin{equation*}
    \varphi\st (\ZZ/p^n\ZZ)^\times\to\fpbar^\times\qquad\text{and}\qquad
    \varepsilon_0\st (\ZZ/M_0\ZZ)^\times\to\fpbar^\times.
  \end{equation*}
  Since $n>0$, $\varphi$ is non-trivial.
  As $p>2$, letting $g$ be a primitive root mod $p^n$, we have $\varphi(g)\in\FF_{p^m}^\times$ for some $m\in\NN$, so the order of $\varphi(g)$ must divide $(p-1)=\gcd\big((p-1)p^{n-1},p^m-1\big)$.
  Therefore $\varphi$ factors through $(\ZZ/p\ZZ)^\times$, so $\psi$ is defined modulo $pM_0$, and the minimality of the conductor gives $n=1$.
\end{proof}
For any integer $N$ coprime to $p$, $\psi$ is defined modulo $pN$ if and only if the prime-to-$p$ part of $\cond(\psi)$ divides $N$.
We can write $\psi=\chi^h\varepsilon$, where
\begin{equation*}
  \chi\st (\ZZ/p\ZZ)^\times\to\fpbar^\times,\quad \chi(x)=x,\quad h\in \ZZ/(p-1)\ZZ,\qquad\text{and}\qquad
  \varepsilon\st (\ZZ/N\ZZ)^\times\to\fpbar^\times.
\end{equation*}
Here $\chi$ corresponds to the mod $p$ cyclotomic character, and $\varepsilon$ is a (not necessarily primitive) Dirichlet character modulo $N$.

\begin{prop}\label{galois_char_loc_const}
  Any continuous character $\psi\st\gq\to\fpbar^\times$ gives rise to a locally constant group homomorphism $\psi'\st \af^\times/\QQ^\times_{>0}\to\fpbar^\times$ such that $U(M)\subseteq\ker\psi'$ for every $M$ modulo which $\psi$ is defined.
\end{prop}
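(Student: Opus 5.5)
The plan is to build $\psi'$ by composing the map $\theta$ of the preceding lemma with the Dirichlet character attached to $\psi$, and then to check that the result does not depend on the modulus chosen.

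\textbf{Construction.} Fix some $M$ modulo which $\psi$ is defined; for instance $M=\cond(\psi)$. As recalled above, $\psi$ then factors through $\Gal(\QQ(\zeta_M)/\QQ)\cong(\ZZ/M\ZZ)^\times$ and yields a Dirichlet character $\psi_M\st(\ZZ/M\ZZ)^\times\to\fpbar^\times$. Set
\begin{equation*}
  \psi'_M:=\psi_M\circ\theta_M\st \af^\times/\QQ^\times_{>0}\to(\ZZ/M\ZZ)^\times\to\fpbar^\times,
\end{equation*}
where $\theta_M$ is the homomorphism of the lemma. It is a group homomorphism because both factors are. It is locally constant because its kernel contains $U(M)=\ker\theta_M$, which is an open subgroup of $\af^\times$: it is a product of open subgroups $\ZZ_\ell^\times(M)$ that equal $\ZZ_\ell^\times$ for almost all $\ell$. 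Hence $\psi'_M$ is constant on the cosets of an open subgroup.

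\textbf{Independence of $M$.} Suppose $M\mid M'$. We need two compatibilities.
\begin{itemize}
  \item The maps $\theta_{M'}$ and $\theta_M$ are compatible with the reduction map $(\ZZ/M'\ZZ)^\times\to(\ZZ/M\ZZ)^\times$. This is checked on the generators $\iota_\ell(x)$ and $\iota_\ell(\ell)$ using the explicit formulas in the proof of the lemma, since reducing each of those formulas modulo a divisor gives the corresponding formula for the divisor.
  \item The Dirichlet characters satisfy $\psi_{M'}=\psi_M\circ(\text{reduction})$. This holds because the restriction $\Gal(\QQ(\zeta_{M'})/\QQ)\to\Gal(\QQ(\zeta_M)/\QQ)$ corresponds to reduction mod $M$ under the cyclotomic isomorphisms.
\end{itemize}
Together these give $\psi'_{M'}=\psi'_M$. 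Any two admissible moduli are multiples of $\cond(\psi)$, so all the $\psi'_M$ agree with $\psi'_{\cond(\psi)}$; call this common map $\psi'$. For every admissible $M$ we then have $U(M)=\ker\theta_M\subseteq\ker\psi'_M=\ker\psi'$, which is the required inclusion.

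\textbf{Main obstacle.} The only real care needed is the compatibility of $\theta$ with change of modulus. The formula for $\theta(\iota_{\ell_j}(\ell_j))$ involves $1/\ell_j$ in the other components, so reducing from $M'$ to $M$ needs attention when a prime divides $M'$ but not $M$. Both definitions send $\iota_q(q)$ for such a prime to the class of $1/q$ in every component coprime to $q$, so they match after reduction. It is cleanest to verify this on the generators $\ZZ_\ell^\times$ and $\iota_\ell(\ell)$, together with $\QQ^\times_{>0}$, which the lemma already shows lies in the kernel.
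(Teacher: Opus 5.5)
Your proposal is correct and is essentially the paper's proof: compose $\theta$ with the Dirichlet character, use $U(M)=\ker\theta$ for the kernel inclusion, and get independence of $M$ from the compatibility of $\theta$ with reduction $(\ZZ/M'\ZZ)^\times\to(\ZZ/M\ZZ)^\times$ (you also spell out local constancy and the compatibility of the Dirichlet characters under restriction, which the paper leaves implicit). The one difference is a normalization: the paper composes $\theta$ with $\psi^{-1}$ rather than $\psi$, so that $\psi'(\iota_\ell(\ell))=\psi^{-1}(\ell^{-1})=\psi(\ell)$; this does not matter for the present statement, but with your choice the factor $\psi(\ell)$ in \autoref{hecke_prod} would become $\psi(\ell)^{-1}$ and the weight $h(p+1)$ in \autoref{psiD} would become $-h(p+1)$.
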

\begin{proof}
  Identifying $(\ZZ/M\ZZ)^\times$ with $\Gal(\QQ(\zeta_M)/\QQ)$, the map $\AA^\times/\QQ^\times \to (\ZZ/M\ZZ)^\times$ factoring through $\theta$ becomes the Artin reciprocity map.
  Composing $\theta$ with $\psi^{-1}$ we get
  \begin{equation*}
    \psi'\st \af^\times/\QQ^\times_{>0} \xrightarrow{\theta} \big(\ZZ/M\ZZ\big)^\times \xrightarrow{\psi^{-1}} \fpbar^\times.
  \end{equation*}
  The fact that $\ker\theta=U(M)$ allows us to conclude that $U(M)\subseteq\ker \psi'$.

  If $M\mid M'$, then $\theta$ for $M$ is the composition of $\theta$ for $M'$ with the map $(\ZZ/M'\ZZ)^\times\to (\ZZ/M\ZZ)^\times$, so $\psi'$ is independent of the choice of $M$.
\end{proof}

\subsection{The quaternion algebra $D$}

We turn now to the unique \emph{definite} quaternion algebra $D$ over $\QQ$ with discriminant $p\delta$.
The units of $D$ define an algebraic group $D^\times$ over $\QQ$:
\begin{equation*}
  D^\times(R)=\big(D\otimes_\QQ R\big)^\times\qquad\text{for any $\QQ$-algebra $R$}.
\end{equation*}
Fix a maximal order $\oh$ of $D$.
(For the purposes of comparison with the Shimura curve setup, we will later need to choose a particular maximal order $\oh$ as the endomorphism ring of a distinguished object, see the proof of \autoref{main thm 2}.)

For a prime $\ell$ we write $\oh_\ell:=\oh \otimes \ZZ_\ell$ and $D_\ell :=D \otimes \QQ_\ell$.
For $\ell\nmid p\delta$, we identify $\oh_\ell\cong\Mat_2(\ZZ_\ell)$ via a fixed isomorphism; the reduced norm $\Nm_\ell\st D_\ell\to \QQ_\ell$ is simply $\Nm_\ell(x)=\det(x)$ for $x\in D_\ell\cong \Mat_2(\QQ_\ell)$.

The division algebra $D_p$ can be presented as $D_p=\QQ_p\oplus \QQ_p i\oplus \QQ_p \pi\oplus \QQ_p i\pi$, where
\begin{equation*}
  i^2=e=\text{a quadratic non-residue mod $p$},\qquad \pi^2=p,\qquad \pi i=-i\pi.
\end{equation*}
The maximal order is then
\begin{equation*}
  \oh_p=\ZZ_p\oplus \ZZ_p i\oplus \ZZ_p \pi\oplus \ZZ_p i\pi,
\end{equation*}
and $\pi$ is a uniformiser for the unique maximal ideal of $\oh_p$.

The reduced norm $\Nm_p\st D_p\to \QQ_p$ is given by
\begin{equation*}
  \Nm_p(a+bi+c\pi+di\pi)=a^2-eb^2-pc^2+epd^2.
\end{equation*}
For $x\in D_p$, we have $x\in\oh_p^\times$ if and only if $\Nm_p(x)\in\ZZ_p^\times$.

(Proofs of the above facts, as well as more details about the division algebra $D_p$ can be found in \cite[Section II.1]{Vigneras} or \cite[Theorem 13.1.6]{Voi1}.)

For any integer $N$ coprime to $p\delta$ and any $\ell\nmid p\delta$ define
\begin{align*}
  \oh_\ell^\times(N)     & =\oh_{\ell,1}^\times(N)=\oh_\ell^\times                                                                                                                                                            & \text{if }\ell\nmid N \\
  \oh_\ell^\times(N)     & = \left\{\begin{pmatrix}a & b\\c & d\end{pmatrix}\in\GL_2(\ZZ_\ell)\st \begin{pmatrix}a & b\\c & d\end{pmatrix}\equiv\begin{pmatrix}1 & 0\\0 & 1\end{pmatrix}\mod{\ell^{v_\ell(N)}}\right\}        & \text{if }\ell\mid N  \\
  \oh_{\ell,1}^\times(N) & = \left\{\begin{pmatrix}a & b\\c & d\end{pmatrix}\in\GL_2(\ZZ_\ell)\st \begin{pmatrix}a & b\\c & d\end{pmatrix}\equiv\begin{pmatrix}1 & \ast \\0 & \ast\end{pmatrix}\mod{\ell^{v_\ell(N)}}\right\} & \text{if }\ell\mid N  \\
  \oh_p^\times(\pi)      & = \left\{x\in\oh_p^\times\st x\equiv 1\mod{\pi}\right\}.
\end{align*}

\begin{lemma}\label{norm_at_p}
  The reduced norm $\Nm_p\st D_p\to \QQ_p$ maps $\oh_p^\times(\pi)$ to $\ZZ_p^\times(p)$.
\end{lemma}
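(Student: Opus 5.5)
We need to show that if $x\in\oh_p^\times$ with $x\equiv 1\pmod{\pi}$, then $\Nm_p(x)\equiv 1\pmod p$, i.e. $\Nm_p(x)\in\ZZ_p^\times(p)$. The plan is a direct computation with the explicit presentation of $\oh_p$ and the explicit norm formula recalled above.

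\textbf{Step 1: describe $\pi\oh_p$.} Multiplying a general element $a+bi+c\pi+di\pi\in\oh_p$ on the left by $\pi$ gives
\begin{equation*}
  \pi(a+bi+c\pi+di\pi)=a\pi-bi\pi+cp+dp\,i \cdot(-1)\cdot(-1)^{0},
\end{equation*}
where we use $\pi i=-i\pi$ and $\pi^2=p$. The exact signs do not matter for the argument. What matters is that $\pi\oh_p=p\ZZ_p\oplus p\ZZ_p i\oplus\ZZ_p\pi\oplus\ZZ_p i\pi$; the reverse inclusion follows by running the same computation backwards. Consequently, $x=a+bi+c\pi+di\pi$ satisfies $x\equiv 1\pmod{\pi}$ if and only if $a\equiv 1\pmod p$ and $b\equiv 0\pmod p$, with $c,d\in\ZZ_p$ arbitrary.

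\textbf{Step 2: evaluate the norm.} By the formula $\Nm_p(x)=a^2-eb^2-pc^2+epd^2$, the last two terms are divisible by $p$. The term $eb^2$ is divisible by $p^2$. Finally $a^2\equiv 1\pmod p$. Hence $\Nm_p(x)\equiv 1\pmod p$. In particular $\Nm_p(x)$ is a unit, consistent with $x\in\oh_p^\times$, and it lies in $\ZZ_p^\times(p)=\{y\in\ZZ_p^\times\st y\equiv 1\bmod p\}$, as required.

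\textbf{Main obstacle.} The only real step is the identification of the two-sided ideal $\pi\oh_p$ in coordinates, which rests on the commutation relation $\pi i=-i\pi$. The alternative route avoids coordinates entirely. Reduction modulo $\pi$ gives $\oh_p/\pi\oh_p\cong\fps$, generated by the image of $i$. Under this identification the reduced norm mod $p$ becomes the field norm $\fps\to\fp$, because $\Nm_p(a+bi)=a^2-eb^2=N_{\fps/\fp}(\bar a+\bar b\bar i)$ and the $\pi$-terms vanish mod $p$. The kernel of reduction therefore maps into norm $1$. Either route gives the lemma.
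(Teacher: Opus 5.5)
Your proof is correct and takes essentially the same approach as the paper: the paper writes $x=1+y\pi$ with $y=a+bi+c\pi+di\pi$, expands in the basis $1,i,\pi,i\pi$, and reads off $\Nm_p(x)=(1+pc)^2-ep^2d^2-pa^2+epb^2\equiv 1\pmod{p}$. This is the same coordinate computation you carry out after first identifying $\pi\oh_p=p\ZZ_p\oplus p\ZZ_p i\oplus\ZZ_p\pi\oplus\ZZ_p i\pi$, and your alternative route through the field norm of $\fps$ is the observation the paper itself uses later, in the proof of Proposition~\ref{psiD}, to get $[\Nm_p(\mu)]=[\mu]^{p+1}$.
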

\begin{proof}
  If $x\in\oh_p^\times(\pi)$ then $x=1+y\pi$ for some $y\in\oh_p$; writing $y=a+bi+c\pi+di\pi$ with $a,b,c,d\in\ZZ_p$, we have
  \begin{equation*}
    \Nm_p(x)=\Nm_p\big((1+pc)-pdi+a\pi-bi\pi\big)=(1+pc)^2-ep^2d^2-pa^2+peb^2\equiv 1\pmod{p\ZZ_p},
  \end{equation*}
  hence $\Nm_p(x)\in\ZZ_p^\times(p)$.
\end{proof}

\begin{lemma}\label{dihedral}
  For any $\mu\in\oh_p^\times$, we have
  \begin{equation*}
    \pi^{-1}\mu\pi\in\oh_p^\times\qquad\text{and}\qquad \pi^{-1}\mu\pi\mu^{-p}\in\oh_p^\times(\pi).
  \end{equation*}
\end{lemma}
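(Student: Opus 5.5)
The first claim follows from the facts that $\pi$ normalises $\oh_p$ and that the reduced norm is multiplicative. Since $\pi$ generates the unique maximal ideal $\pi\oh_p=\oh_p\pi$, conjugation by $\pi$ preserves $\oh_p$. Concretely, $\pi^{-1}\mu\pi\in\oh_p$, and $\Nm_p(\pi^{-1}\mu\pi)=\Nm_p(\mu)\in\ZZ_p^\times$. The criterion recalled above ($x\in\oh_p^\times$ iff $\Nm_p(x)\in\ZZ_p^\times$) then gives $\pi^{-1}\mu\pi\in\oh_p^\times$. One can also check this directly with the presentation: for $\mu=a+bi+c\pi+di\pi$, the relations $\pi^{-1}i\pi=-i$ and $\pi^{-1}\pi\pi=\pi$ give
\begin{equation*}
\pi^{-1}\mu\pi=a-bi+c\pi-di\pi,
\end{equation*}
which visibly lies in $\oh_p$.

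For the second claim we work in the residue field $\oh_p/\pi\oh_p$. Since $\pi\oh_p$ is a two-sided ideal and $\oh_p/\pi\oh_p\cong\ZZ_p[i]/(p)\cong\FF_p(\sqrt e)=\fps$, the reduction of $\mu$ is $\bar\mu=\bar a+\bar b\bar i$. The congruence $\pi^{-1}\mu\pi\mu^{-p}\equiv 1\pmod\pi$ is equivalent to $\overline{\pi^{-1}\mu\pi}=\bar\mu^{p}$ in $\fps$. This uses that reduction mod $\pi$ is a ring homomorphism and that $\bar\mu$ is invertible, so $\overline{\mu^{-p}}=\bar\mu^{-p}$.

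By the formula above, $\overline{\pi^{-1}\mu\pi}=\bar a-\bar b\bar i$. On the other side, $\bar\mu^p=\bar a^p+\bar b^p\bar i^p=\bar a+\bar b\,\bar i^{\,p}$ because $\bar a,\bar b\in\FF_p$. Finally $\bar i^{\,p}=\bar i\cdot\bar e^{(p-1)/2}=-\bar i$, since $e$ is a non-residue (Euler's criterion). So the two sides agree. The product $\pi^{-1}\mu\pi\mu^{-p}$ is a unit, being a product of units by the first part, so it lies in $\oh_p^\times(\pi)$.

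The only step needing care is identifying conjugation by $\pi$, reduced mod $\pi$, with the Frobenius of $\fps$. This reduces to the sign computation $\bar i^{\,p}=-\bar i$ together with the anticommutation relation $\pi i=-i\pi$, and there is no real obstacle beyond writing it out.
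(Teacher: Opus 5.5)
Your proof is correct and follows essentially the same route as the paper: the first claim comes from $\Nm_p(\pi^{-1}\mu\pi)=\Nm_p(\mu)$, and the second from computing $\pi^{-1}\mu\pi=a-bi+c\pi-di\pi$ and comparing its reduction modulo $\pi$ in $\fps$ with that of $\mu$. The only difference is cosmetic: the paper simply notes that the conjugation $a+bi\mapsto a-bi$ of $\fps$ is the $p$-power Frobenius, whereas you check $\bar i^{\,p}=-\bar i$ explicitly via Euler's criterion.
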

\begin{proof}
  The first statement is clear since $\Nm_p(\pi^{-1}\mu\pi)=\Nm_p(\mu)$.

  For the second, write $\varphi\st\oh_p^\times\to\fps^\times$ for the reduction modulo $\pi$.
  We want to show that
  \begin{equation*}
    \varphi(\pi^{-1}\mu\pi)=\varphi(\mu)^p\qquad\text{in }\fps.
  \end{equation*}
  In the above basis, we have
  \begin{equation*}
    \mu=a+bi+c\pi+di\pi,\qquad a,b,c,d\in\ZZ_p,
  \end{equation*}
  therefore
  \begin{equation*}
    \pi^{-1}\mu\pi=a-bi+c\pi-di\pi.
  \end{equation*}
  After reduction modulo $\pi$, we have
  \begin{equation*}
    \varphi(\mu)=a+bi\qquad\text{and}\qquad\varphi(\pi^{-1}\mu\pi)=a-bi,
  \end{equation*}
  where $a,b\in\FF_p$ and $\pm i$ are the two roots of the irreducible quadratic polynomial $x^2-e$.
  Hence $\varphi(\mu)$ and $\varphi(\pi^{-1}\mu\pi)$ are nontrivial Galois conjugates in $\fps$, but the Galois conjugation $i\mapsto -i$ is raising to the power $p$ in $\fps$.
\end{proof}

\subsection{Algebraic modular forms for $D^\times$}

For any prime $\ell$, we denote $\iota_\ell\st D_\ell^\times\to D^\times(\af)$ the canonical inclusion.
%(There should be no confusion with the analogous map $\iota_\ell\st\QQ_\ell^\times\to\af^\times$.)
Set
\begin{align*}
  U^D(N)        & =\oh_p^\times(\pi)\times\prod_{\ell\neq p} \oh_\ell^\times(N)                          \\
  U^D_1(N)      & =\oh_p^\times(\pi)\times\prod_{\ell\neq p} \oh_{\ell,1}^\times(N)                      \\
  \Omega^D(N)   & =U^D(N)\backslash D^\times(\af)/D^\times(\QQ)                                          \\
  \Omega^D_1(N) & =U^D_1(N)\backslash D^\times(\af)/D^\times(\QQ)                                        \\
  \MM(\Omega)   & =\left\{f\st \Omega\to\fpbar\right\}\qquad\text{for }\Omega=\Omega^D(N),\Omega^D_1(N).
  %M(\Omega)    & =\left\{f\st D^\times(\af)/D^\times(\QQ)\to\fpbar\st f\text{ locally constant}\right\}.
\end{align*}
%(We might want to continue to require $\oh_p^\times(1)$-invariance in $M(\Omega)$, or simply work with $M(\Omega(N))$.)
The latter $\fpbar$-vector space $\MM(\Omega)$ is called the space of algebraic modular forms of level $\Omega$ on $D^\times$.
It is equipped with Hecke operators $T_\ell$ (recall, for $\ell \nmid Np\delta$):
\begin{equation*}
  T_\ell f (x) = \frac{1}{\ell} \sum_g f\big(\iota_\ell(g)x\big),\qquad\text{where}\qquad
  \GL_2(\ZZ_\ell)\begin{pmatrix}1 & 0 \\ 0 & \ell\end{pmatrix}\GL_2(\ZZ_\ell)=\bigsqcup_g \GL_2(\ZZ_\ell)g.
\end{equation*}

There is a left action of the group $D^\times(\af)$ on the space $\MM(\Omega)$ by
\begin{equation*}
  (g\cdot f)(x)=f\big(g^{-1}x\big).
\end{equation*}
% Can also do $(g\cdot f)(x)=f(xg)$.
As Serre notes in~\cite{serre-letters}, this action has some very interesting properties.

To see this, consider the subspace of $\MM(\Omega)$ of functions of weight $k\in\ZZ$, given by
\begin{equation*}
  \MM_k(\Omega)=\left\{f\st\Omega\to\fpbar\st f\big(\iota_p(\mu)x\big)=[\mu]^{-k}f(x)\text{ for all }[\mu]\in\fps^\times\right\},
\end{equation*}
where $\mu\in\oh_p^\times$ is any representative of $[\mu]\in\fps^\times=\oh_p^\times/\oh_p^\times(\pi)$.
Since $[\mu]^{p^2-1}=1\in\fps^\times$, we have
\begin{prop}\label{MM_periodic}
  $\MM_{k+p^2-1}(\Omega)=\MM_k(\Omega)$ for all $k\in\ZZ$.
\end{prop}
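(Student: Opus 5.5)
The plan is to compare the two defining conditions directly, since both subspaces sit inside the same ambient space $\MM(\Omega)$ of functions $\Omega\to\fpbar$. First I will check that the weight condition is well defined. The quotient $\oh_p^\times/\oh_p^\times(\pi)$ is identified with $\fps^\times$ via reduction modulo $\pi$, as in the proof of \autoref{dihedral}. For $\mu\in\oh_p^\times$ and $x\in D^\times(\af)$, the value $f(\iota_p(\mu)x)$ depends only on the class $[\mu]$. Indeed, $\iota_p(\oh_p^\times(\pi))\subseteq U^D(N),U^D_1(N)$, and $f$ is left invariant under these groups. It follows that, for each $k\in\ZZ$, the condition $f(\iota_p(\mu)x)=[\mu]^{-k}f(x)$ is a meaningful condition on $f$.

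The key step is the identity $[\mu]^{-(k+p^2-1)}=[\mu]^{-k}\cdot\big([\mu]^{p^2-1}\big)^{-1}=[\mu]^{-k}$. This holds because $\fps^\times$ is a cyclic group of order $p^2-1$, so Lagrange's theorem gives $[\mu]^{p^2-1}=1$. I also need to regard $[\mu]$ as an element of $\fpbar^\times$ through a fixed embedding $\fps\hookrightarrow\fpbar$; this is implicit in the definition of $\MM_k(\Omega)$. With that identification, the defining conditions for $\MM_k(\Omega)$ and for $\MM_{k+p^2-1}(\Omega)$ agree for every representative $\mu$ and every $x$, so the two subspaces coincide as sets of functions.

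Both inclusions therefore follow at once from this single identity, and no step poses a real obstacle. The only point that needs care is the well-definedness check above, which relies on $\oh_p^\times(\pi)$ being the $p$-component of the level group. An induction on $k$ is not needed, since the identity holds for all $k\in\ZZ$ simultaneously, negative $k$ included.
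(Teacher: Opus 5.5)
Your proposal is correct and is the same argument as the paper, which deduces the proposition directly from $[\mu]^{p^2-1}=1$ in $\fps^\times$. Your well-definedness check is extra care the paper leaves implicit. It is fine because $\oh_p^\times(\pi)$ is normal in $\oh_p^\times$, being the kernel of reduction mod $\pi$, so you can write any representative as $\kappa\mu'$ with $\kappa$ on the left and use left $U^D$-invariance.
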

Note that the Hecke operators $T_\ell$ send functions of weight $k$ to functions of weight $k$ as multiplication by $\iota_\ell(g)$ does not change the component at $p$.

The following is a generalisation of~\cite[Comment (11)]{serre-letters}:
\begin{prop}\label{MM_dihedral}
  The map $f\mapsto \iota_p(\pi)\cdot f$ defines a Hecke-equivariant isomorphism
  \begin{equation*}
    \MM_k(\Omega)\to \MM_{pk}(\Omega).
  \end{equation*}
\end{prop}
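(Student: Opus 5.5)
We have $(\iota_p(\pi)\cdot f)(x)=f(\iota_p(\pi)^{-1}x)$, and the plan is to check four things in turn: that this is well defined on $\Omega$, that it carries weight $k$ to weight $pk$, that it commutes with the Hecke operators, and that it is bijective.

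\textbf{Well-definedness.} We need $x\mapsto f(\iota_p(\pi)^{-1}x)$ to be left $U^D$-invariant, where $U^D=U^D(N)$ or $U^D_1(N)$. Since $\iota_p(\pi)$ is concentrated at $p$, only the $p$-component needs checking, and it suffices that $\pi$ normalises $\oh_p^\times(\pi)$. For $u\in\oh_p^\times(\pi)$, \autoref{dihedral} gives $\pi^{-1}u\pi\in\oh_p^\times$. Reducing modulo $\pi$, this element maps to $\varphi(u)^p=1$, so $\pi^{-1}u\pi\in\oh_p^\times(\pi)$. Hence for $u\in U^D$ we get
\begin{equation*}
  f\big(\iota_p(\pi)^{-1}ux\big)=f\big(u'\,\iota_p(\pi)^{-1}x\big)=f\big(\iota_p(\pi)^{-1}x\big)
\end{equation*}
with $u'=\iota_p(\pi)^{-1}u\,\iota_p(\pi)\in U^D$. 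Right $D^\times(\QQ)$-invariance is automatic, since we act on the left.

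\textbf{Weight.} Let $f\in\MM_k(\Omega)$, $\mu\in\oh_p^\times$ and $g=\iota_p(\pi)\cdot f$. Then
\begin{equation*}
  g\big(\iota_p(\mu)x\big)=f\big(\iota_p(\pi^{-1}\mu\pi)\,\iota_p(\pi)^{-1}x\big).
\end{equation*}
By \autoref{dihedral}, $\pi^{-1}\mu\pi\in\oh_p^\times$ and its class in $\fps^\times$ is $[\mu]^p$. Applying the weight-$k$ transformation rule for $f$ gives $g(\iota_p(\mu)x)=[\mu]^{-pk}g(x)$, so $g\in\MM_{pk}(\Omega)$.

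\textbf{Hecke-equivariance.} For $\ell\nmid Np\delta$, the elements $\iota_\ell(g)$ and $\iota_p(\pi)^{-1}$ are supported at different places, so they commute in $D^\times(\af)$. Therefore
\begin{equation*}
  T_\ell(\iota_p(\pi)\cdot f)(x)=\tfrac1\ell\textstyle\sum_g f\big(\iota_p(\pi)^{-1}\iota_\ell(g)x\big)=\big(\iota_p(\pi)\cdot T_\ell f\big)(x).
\end{equation*}

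\textbf{Bijectivity.} The inverse is the action of $\iota_p(\pi)^{-1}$, which is well defined by the same normalisation argument. It maps $\MM_{pk}(\Omega)$ into $\MM_{p^2k}(\Omega)$, and $\MM_{p^2k}(\Omega)=\MM_k(\Omega)$ by \autoref{MM_periodic}, since $p^2k\equiv k\pmod{p^2-1}$. Alternatively, $\iota_p(\pi)^2=\iota_p(p)=p\cdot\alpha$ with $p\in D^\times(\QQ)$, and one can check that $\alpha$ acts trivially up to units in $U^D$. The cleanest route is simply that the group action is invertible and both directions preserve the weight subspaces as shown.

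\textbf{Main obstacle.} The only delicate point is the well-definedness, namely that $\pi$ normalises $\oh_p^\times(\pi)$. This follows directly from \autoref{dihedral}.
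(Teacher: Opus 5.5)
Your proof is correct, and its core matches the paper's. The weight shift comes from $\pi^{-1}\mu\pi\equiv\mu^p \pmod{\pi}$ (\autoref{dihedral}), and Hecke-equivariance comes from $\iota_p(\pi)$ and $\iota_\ell(g)$ living at different places.

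You differ from the paper in two minor respects.

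\textbf{Well-definedness.} You check that $\pi$ normalises $\oh_p^\times(\pi)$, so that $\iota_p(\pi)\cdot f$ is again left $U^D$-invariant. The paper takes this for granted when it asserts that $D^\times(\af)$ acts on $\MM(\Omega)$; strictly, only the normaliser of $U^D$ acts. Its weight computation, specialised to $\mu\in\oh_p^\times(\pi)$, yields the same fact anyway.

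\textbf{Bijectivity.} You exhibit the explicit inverse $\iota_p(\pi)^{-1}$. The paper instead notes that the map is injective, being the restriction of a bijection of $\MM(\Omega)$. It then observes that the composite $\MM_k(\Omega)\to\MM_{pk}(\Omega)\to\MM_{p^2k}(\Omega)=\MM_k(\Omega)$ is an injective endomorphism of a finite-dimensional space. Your route avoids finite-dimensionality, but you should justify that $\iota_p(\pi)^{-1}$ also multiplies weights by $p$. This holds because $\pi^2=p$ is central, so $\pi\mu\pi^{-1}=\pi^{-1}\mu\pi$ and the same computation applies.

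Finally, drop the aside about $\iota_p(\pi)^2$, because it is wrong. One has $(\iota_p(p)\cdot f)(x)=f(\beta x)$, where $\beta$ is the central element with component $p$ at every $\ell\neq p$ and $1$ at $p$. This $\beta$ lies in $\prod_\ell\oh_\ell^\times$ but not in $U^D$ in general: at $\ell\mid N$ it is the scalar $p$. It therefore acts as an invertible diamond-type operator, not as the identity. For example, for $\psi=\varepsilon$ a character modulo $N$, it scales $\psi_D$ by $\varepsilon(p)^{-2}$. Since your main argument never uses this aside, nothing else breaks.
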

\begin{proof}
  Let $f\in \MM_k(\Omega)$ and let $g=\iota_p(\pi)\cdot f$.
  For any $\mu\in\oh_p^\times$, let $\kappa=\pi^{-1}\mu\pi\mu^{-p}$, then \autoref{dihedral} tells us that $\kappa\in\oh_p^\times(\pi)$, so
  \begin{align*}
    g(\iota_p(\mu)x) & =f\big(\iota_p(\pi^{-1}\mu)x\big)                           \\
                     & =f\big(\iota_p(\kappa\mu^p\pi^{-1})x\big)                   \\
                     & =f\big(\iota_p(\kappa)\iota_p(\mu^p)\iota_p(\pi)^{-1}x\big) \\
                     & =[\mu]^{-pk}f\big(\iota_p(\pi)^{-1}x\big)                   \\
                     & =[\mu]^{-pk}g(x),
  \end{align*}
  where we used the $\oh_p^\times(\pi)$-invariance of $f$, and the fact that $f$ has weight $k$.

  The linearity of the map is clear.
  For the bijectivity, note first that the map is injective, since it is the restriction of a bijective map $\MM(\Omega)\to \MM(\Omega)$ to the subspace $\MM_k(\Omega)$.
  Therefore the composition
  \begin{equation*}
    \MM_k(\Omega)\to \MM_{pk}(\Omega)\to \MM_{p^2k}(\Omega)=\MM_k(\Omega)
  \end{equation*}
  is an injective endomorphism of a finite-dimensional vector space, hence bijective.

  The Hecke equivariance is simply due to the fact that for any prime $\ell\nmid p\delta$, the action of the Hecke operator $T_\ell$ is defined by working entirely in the $\QQ_\ell$ component of $\af$ via $\iota_\ell$, and the action of $\pi$ is defined by working entirely in the $\QQ_p$ component via $\iota_p$, hence these two actions commute.
\end{proof}

% The centre $Z=\af^\times$ of $D^\times(\af)$ acts on $\MM(\Omega)$ by restricting the action of $D^\times(\af)$.
% Given $\omega\st \af^\times\to\fpbar^\times$, the subspace of functions with central character $\omega$ is defined to be
% \begin{equation*}
%   \MM(\Omega;\omega)=\left\{f\st\Omega\to\fpbar\st f\big(z^{-1}x\big)=\omega(z)f(x)\text{ for all }z\in Z\right\}.
% \end{equation*}

\subsection{Twisting Hecke eigensystems}

It remains to discuss one more structural relation between spaces of algebraic modular forms, given by twisting by a Galois character.
This generalises~\cite[Comment (13)]{serre-letters}.

We first describe how a Galois character gives an algebraic modular form on $D^\times$.
We package the local reduced norms $\Nm_\ell\st D^\times_\ell\to\QQ^\times_\ell$ into an adelic map $\Nm\st D^\times(\af)/D^\times(\QQ)\to\af^\times/\QQ^\times_{>0}$.

Let $\psi\st\gq\to\fpbar^\times$ be a Galois character defined modulo $pN$; by \autoref{conductor_valuation}, this means that the prime-to-$p$ part of $\cond(\psi)$ divides $N$.
Write $\psi=\chi^h\varepsilon$, with $\varepsilon$ a character modulo $N$, and let $\psi'\st\af^\times/\QQ^\times_{>0}\to\fpbar^\times$ be the corresponding locally constant homomorphism from \autoref{galois_char_loc_const}, so that $U(pN)\subseteq\ker\psi'$.

Consider the composition
\begin{equation*}
  \psi_D\st D^\times(\af)/D^\times(\QQ)\xrightarrow{\Nm} \af^\times/\QQ^\times_{>0} \xrightarrow{\psi'} \fpbar^\times.
\end{equation*}
Note that $\psi_D$ is multiplicative:
\begin{equation*}
  \psi_D(xy)=\psi_D(x)\psi_D(y)\qquad\text{for all }x,y\in D^\times(\af).
\end{equation*}

\begin{prop}\label{psiD}
  We have $\Nm\big(U^D(N)\big)\subseteq U(pN)$ and $\psi_D\in \MM_{h(p+1)}\big(\Omega^D(N)\big)$.

  If $\varepsilon=\mathbf{1}$ then $\psi_D\in \MM_{h(p+1)}\big(\Omega^D_1(N)\big)$.
\end{prop}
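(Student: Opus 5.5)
The plan is to check the norm inclusion place by place, and then read off the three properties needed for membership in $\MM_{h(p+1)}$: well-definedness on $\Omega$, invariance under $U^D(N)$, and the weight transformation law.

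\textbf{Norm inclusion.} Recall that $U(pN)=\prod_\ell \ZZ_\ell^\times(pN)$. It suffices to show that $\Nm_\ell$ maps the $\ell$-component of $U^D(N)$ into $\ZZ_\ell^\times(pN)$.
\begin{itemize}
  \item At $\ell=p$, this is exactly \autoref{norm_at_p}, since $v_p(pN)=1$.
  \item At $\ell\mid\delta$, $\oh_\ell$ is the maximal order of a division algebra, so its units have unit reduced norm. The condition $\ZZ_\ell^\times(pN)=\ZZ_\ell^\times$ is vacuous there, because $\ell\nmid pN$.
  \item At $\ell\nmid p\delta N$, the determinant of an element of $\GL_2(\ZZ_\ell)$ lies in $\ZZ_\ell^\times$.
  \item At $\ell\mid N$, a matrix congruent to the identity mod $\ell^{v_\ell(N)}$ has determinant $\equiv 1$ mod $\ell^{v_\ell(N)}$.
\end{itemize}

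\textbf{Invariance and well-definedness.} Well-definedness on $D^\times(\af)/D^\times(\QQ)$ holds by construction. Indeed, $\Nm(D^\times(\QQ))\subseteq\QQ^\times_{>0}$ because $D$ is definite, so reduced norms of nonzero elements are positive; this is already implicit in the definition of the adelic map $\Nm$. By multiplicativity, $\psi_D(ux)=\psi'(\Nm u)\psi_D(x)$. Since $U(pN)\subseteq\ker\psi'$ by \autoref{galois_char_loc_const}, the inclusion just proved gives $U^D(N)$-invariance, so $\psi_D$ is a function on $\Omega^D(N)$.

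\textbf{Weight.} Take $\mu\in\oh_p^\times$. Multiplicativity gives $\psi_D(\iota_p(\mu)x)=\psi'(\iota_p(\Nm_p\mu))\psi_D(x)$. From the proof of the lemma defining $\theta$, $\theta(\iota_p(y))=y \bmod p$ in the $p$-component and $1$ in the others, for $y\in\ZZ_p^\times$. Hence
\[
\psi'(\iota_p(y))=\psi^{-1}(y)=\chi(y)^{-h}=\bar y^{-h},
\]
where $\varepsilon$ contributes nothing because its component is trivial. Reducing modulo $\pi$, $\Nm_p(\mu)$ becomes the norm from $\fps$ to $\fp$ of $[\mu]$, which is $[\mu]^{p+1}$. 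This follows from the explicit formula $\Nm_p(a+bi+\dots)\equiv a^2-eb^2=(a+bi)(a-bi)$ together with the conjugation computation in \autoref{dihedral}. Therefore $\psi_D(\iota_p(\mu)x)=[\mu]^{-h(p+1)}\psi_D(x)$. Note that $h$ is only defined mod $p-1$, but $h(p+1)$ is then well defined mod $p^2-1$, which is consistent with \autoref{MM_periodic}.

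\textbf{Level $V_1(N)$.} When $\varepsilon=\mathbf 1$, $\psi$ is defined modulo $p$, so $U(p)\subseteq\ker\psi'$. It then suffices to know that $\Nm(U^D_1(N))\subseteq U(p)$. This holds because at $\ell\neq p$ the norms lie in $\ZZ_\ell^\times=\ZZ_\ell^\times(p)$, and the $p$-component is the same as before.

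The main obstacle is the weight step. There one has to pin down exactly how $\theta$, and hence $\psi'=\psi^{-1}\circ\theta$, acts on $\iota_p(\ZZ_p^\times)$, including the sign coming from the inverse, and identify the reduced norm mod $\pi$ with the $\fps/\fp$ norm.
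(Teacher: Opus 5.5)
Your proposal is correct and follows essentially the same route as the paper. You check the norm inclusion place by place, using \autoref{norm_at_p} at $p$ and the determinant congruence at $\ell\mid N$. You then get the weight from $[\Nm_p(\mu)]=[\mu]^{p+1}$ together with $\psi'=\psi^{-1}\circ\theta$ on $\iota_p(\ZZ_p^\times)$, and you also make explicit a few points the paper leaves implicit: the places $\ell\mid\delta$, the positivity of norms on $D^\times(\QQ)$, and the fact that $h(p+1)$ is well defined modulo $p^2-1$.
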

\begin{proof}
  If $\ell\nmid pN$ then $\ZZ_{\ell}^\times(pN)=\ZZ_{\ell}^\times$ and there is nothing to check.

  If $\ell\mid N$ then $\oh_\ell^\times\cong\GL_2(\ZZ_\ell)$ and the local reduced norm map $\Nm_\ell$ is the determinant, so
  \begin{align*}
    x_\ell=\begin{pmatrix} a_\ell & b_\ell \\ c_\ell & d_\ell\end{pmatrix}\in\oh_\ell^\times(N)
     & \Rightarrow \begin{pmatrix} a_\ell & b_\ell \\ c_\ell & d_\ell\end{pmatrix}\equiv\begin{pmatrix} 1 & 0 \\ 0 & 1\end{pmatrix}\pmod{\ell^{v_\ell(N)}} \\
     & \Rightarrow a_\ell d_\ell - b_\ell c_\ell \equiv 1\pmod{\ell^{v_\ell(N)}}                                                                           \\
     & \Rightarrow \Nm_\ell(x_\ell)=\det(x_\ell)\in \ZZ_\ell^\times(pN).
  \end{align*}
  Finally, if $\ell=p$ we use \autoref{norm_at_p} to get that $\Nm_p$ maps $\oh_p^\times(\pi)$ to $\ZZ_p^\times(p)\subseteq\ZZ_p^\times(pN)$.

  We conclude that $\Nm\big(U^D(N)\big)\subseteq U(pN)\subseteq\ker\psi'$.

  Since $\ZZ_{\ell}^\times(p)=\ZZ_{\ell}^\times$ for all $\ell\neq p$, we get that $\Nm\big(U_1^D(N)\big)\subseteq U(p)$.
  If $\varepsilon=\mathbf{1}$ then $\psi$ is defined modulo $p$, so $U(p)\subseteq\ker\psi'$.

  Since $\psi_D$ is multiplicative, to find its weight it suffices to compute for $\mu\in\oh_p^\times$
  \begin{equation*}
    \psi_D\big(\iota_p(\mu)\big)=\psi^{-1}\left(\theta\big(\iota_p(\Nm_p(\mu))\big)\right).
  \end{equation*}
  As before, we write $[\mu]$ for the image of $\mu\in\oh_p^\times$ in $\fps^\times=\oh_p^\times/\oh_p^\times(\pi)$.
  If $\mu=a+bi+c\pi+di\pi$, then $[\mu]=[a+bi]$.

  Let us write $[\Nm_p(\mu)]$ for the image of $\Nm_p(\mu)\in\ZZ_p^\times$ in $\ZZ_p^\times/\ZZ_p^\times(p)=\fp^\times\subseteq\fps^\times$.
  Then
  \begin{align*}
    \Nm_p(\mu) & =a^2-eb^2-pc^2+epd^2                                                                           \\
               & \Rightarrow\quad [\Nm_p(\mu)]=[a^2-eb^2]=[a-bi][a+bi]=[a+bi]^p[a+bi]=[a+bi]^{p+1}=[\mu]^{p+1},
  \end{align*}
  since the nontrivial Galois conjugation $[a+bi]\mapsto [a-bi]$ of $\fps^\times$ is given by raising to the power $p$ Frobenius.

  Going back to $\psi_D$, we observe that
  \begin{equation*}
    \psi_D\big(\iota_p(\mu)\big)=\psi^{-1}\left(\theta\big(\iota_p(\Nm_p(\mu))\big)\right)
    =\chi^{-h}\big([\Nm_p(\mu)]\big)=[\mu]^{-h(p+1)},
  \end{equation*}
  hence $\psi_D$ has weight $h(p+1)$.
\end{proof}

% This is now subsumed by the previous result.
% \begin{lemma}
%   If the conductor $M=N$ of $\psi'$ is coprime to $p$, then $\psi_D$ has weight $0$:
%   \begin{equation*}
%     \psi_D\in M_0\big(\Omega^D(N)\big).
%   \end{equation*}
% \end{lemma}
% \begin{proof}
%   If $p\nmid N$ then $\Nm_p(\oh_p^\times)\subseteq \ZZ_p^\times\subseteq\ker\psi'$, so
%   \begin{equation*}
%     \psi_D(\iota_p(\mu)x)=\psi_D(\iota_p(\mu))\psi_D(x)=\psi_D(x)\qquad\text{for all }\mu\in\oh_p^\times.\qedhere
%   \end{equation*}
% \end{proof}

% The central character of $\psi_D$ is $\psi^2$.

\begin{lemma}\label{hecke_prod}
  If $f$ is a locally constant function on $D^\times(\af)/D^\times(\QQ)$, then
  \begin{equation*}
    T_\ell\big(f \psi_D\big) = \psi(\ell)  \psi_D T_\ell(f)
  \end{equation*}
  for all primes $\ell$ coprime to $p\delta N$ and the level of $f$.

  In particular, $\psi_D$ is an eigenfunction of $T_\ell$ for all primes coprime to $p\delta N$:
  \begin{equation*}
    T_\ell \psi_D = \big(1+\ell^{-1}\big)\psi(\ell) \psi_D.
  \end{equation*}
\end{lemma}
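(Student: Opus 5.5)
The plan is to use the multiplicativity of $\psi_D$ together with the definition of $T_\ell$ as a sum over coset representatives of the double coset $\GL_2(\ZZ_\ell)\begin{pmatrix}1&0\\0&\ell\end{pmatrix}\GL_2(\ZZ_\ell)$. For each representative $g$ we have $\psi_D(\iota_\ell(g)x)=\psi_D(\iota_\ell(g))\psi_D(x)$. Therefore
\begin{equation*}
  T_\ell(f\psi_D)(x)=\frac{1}{\ell}\sum_g f\big(\iota_\ell(g)x\big)\,\psi_D\big(\iota_\ell(g)\big)\,\psi_D(x).
\end{equation*}
The whole statement then reduces to showing that $\psi_D(\iota_\ell(g))$ equals $\psi(\ell)$, independently of the representative $g$. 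Once this is known, the constant factors out of the sum, and the right-hand side becomes $\psi(\ell)\psi_D(x)T_\ell f(x)$. The hypothesis that $\ell$ is coprime to the level of $f$ is only needed so that $T_\ell f$ makes sense with the same double coset decomposition.

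To compute $\psi_D(\iota_\ell(g))$, I will first note that every $g$ in the double coset has $\det g\in\ell\ZZ_\ell^\times$. Hence $\Nm(\iota_\ell(g))=\iota_\ell(\det g)=\iota_\ell(\ell u)$ for some $u\in\ZZ_\ell^\times$. Since $\ell\nmid pN$, the element $\iota_\ell(u)$ lies in $U(pN)\subseteq\ker\psi'$, so $\psi_D(\iota_\ell(g))=\psi'(\iota_\ell(\ell))$.

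Next I use the decomposition from the proof of the lemma on $\theta$: $\iota_\ell(\ell)=\ell\cdot\alpha$, with $\alpha_q=1/\ell$ for $q\ne\ell$. Since $\ell\in\QQ^\times_{>0}$ is killed, $\theta(\iota_\ell(\ell))=\theta(\alpha)=\ell^{-1}\bmod pN$. Then, by the construction $\psi'=\psi^{-1}\circ\theta$ in \autoref{galois_char_loc_const}, we get $\psi'(\iota_\ell(\ell))=\psi^{-1}(\ell^{-1})=\psi(\ell)$. This sign and inverse bookkeeping is the main point to check carefully. It is essentially the statement that the Artin map sends the uniformiser at $\ell$ to $\mathrm{Frob}_\ell$ (up to the inverse convention chosen), and I expect it to be the only delicate step.

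For the second statement, I take $f=\mathbf 1$, the constant function. Then $T_\ell\mathbf 1=\frac{1}{\ell}\cdot\#\{g\}$, and the double coset has $\ell+1$ right cosets. This gives $T_\ell\mathbf 1=(1+\ell^{-1})\mathbf 1$, and applying the first formula yields $T_\ell\psi_D=(1+\ell^{-1})\psi(\ell)\psi_D$.
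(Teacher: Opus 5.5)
Your proposal is correct and follows the same approach as the paper: multiplicativity of $\psi_D$, then $\psi_D(\iota_\ell(g))=\psi^{-1}(\theta(\iota_\ell(\ell)))=\psi^{-1}(\ell^{-1})=\psi(\ell)$, which factors out of the sum. You are slightly more careful than the paper in two places. You allow $\det g\in\ell\ZZ_\ell^\times$ and absorb the unit into $U(pN)\subseteq\ker\psi'$, whereas the paper simply asserts $\det g=\ell$. You also spell out the ``in particular'' statement by taking $f=\mathbf 1$ and using the $\ell+1$ cosets.
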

\begin{proof}
  Recall that the definition of $T_\ell$ is a sum over representatives $g\in\GL_2(\QQ_\ell)$ given by
  \begin{equation*}
    \GL_2(\ZZ_\ell)\begin{pmatrix}1 & 0 \\ 0 & \ell\end{pmatrix}\GL_2(\ZZ_\ell)=\bigsqcup_g \GL_2(\ZZ_\ell)g,
  \end{equation*}
  in particular all such $g$ have $\Nm_\ell(g)=\det(g)=\ell$.
  Therefore
  \begin{equation*}
    \psi_D\big(\iota_\ell(g)\big)=\psi^{-1}\left(\theta\big(\iota_\ell(\Nm_\ell(g))\big)\right)=\psi^{-1}\left(\theta(\iota_\ell(\ell))\right)=\psi^{-1}(\ell^{-1})=\psi(\ell).
  \end{equation*}
  We have
  \begin{align*}
    (T_\ell (\psi_D f))(x)
     & =\frac{1}{\ell}\sum_g f\big(\iota_\ell(g)x\big)\psi_D\big(\iota_\ell(g)x\big)         \\
     & =\frac{1}{\ell}\sum_g f\big(\iota_\ell(g)x\big)\psi_D\big(\iota_\ell(g)\big)\psi_D(x) \\
     & =\frac{1}{\ell}\psi(\ell)\psi_D(x)\sum_g f\big(\iota_\ell(g)x\big)                    \\
     & =\psi(\ell)\psi_D(x) (T_\ell f)(x).\qedhere
  \end{align*}
\end{proof}

% There is a reducible Galois representation corresponding to the Hecke eigensystem of $\psi_D$, namely $\chi^{-1}\psi\oplus\psi$.

Given a Hecke module $\MM$ and a character $\psi$, we denote by $\MM[\psi]$ the twist of $\MM$ by $\psi$: this is the same as $\MM$ as a vector space, but the action of $T_\ell$ is replaced by $T_\ell \psi(\ell)$.

\begin{thm}\label{twist_iso}
  Let $\psi=\chi^h\varepsilon$ be a character defined modulo $pN$.
  Multiplication by $\psi_D$ is a Hecke-equivariant isomorphism
  \begin{equation*}
    \MM_k\big(\Omega^D(N)\big)[\psi]\to \MM_{k+h(p+1)}\big(\Omega^D(N)\big).
  \end{equation*}

  If $\varepsilon=\mathbf{1}$, then multiplication by $\psi_D$ is a Hecke-equivariant isomorphism
  \begin{equation*}
    \MM_k\big(\Omega_1^D(N)\big)[\psi]\to \MM_{k+h(p+1)}\big(\Omega_1^D(N)\big).
  \end{equation*}
\end{thm}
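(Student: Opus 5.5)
The plan is to check three things for the multiplication map $f\mapsto f\psi_D$ on $\MM_k(\Omega)[\psi]$, with $\Omega=\Omega^D(N)$ (or $\Omega_1^D(N)$ when $\varepsilon=\mathbf{1}$). First, that it is well defined with values in $\MM_{k+h(p+1)}(\Omega)$. Second, that it is bijective. Third, that it intertwines the twisted Hecke action with the usual one.

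\emph{Well-definedness.} By \autoref{psiD}, $\psi_D$ is a function on $\Omega$, i.e.\ it is left $U^D(N)$-invariant (respectively $U_1^D(N)$-invariant when $\varepsilon=\mathbf{1}$), and it has weight $h(p+1)$. Let $f\in\MM_k(\Omega)$ and $u$ be in the relevant level group. Since $\psi_D$ is multiplicative and $\psi_D(u)=1$, we get $(f\psi_D)(ux)=f(x)\psi_D(u)\psi_D(x)=(f\psi_D)(x)$, so $f\psi_D$ descends to $\Omega$. For the weight, take $\mu\in\oh_p^\times$ and use multiplicativity again:
\begin{equation*}
  (f\psi_D)(\iota_p(\mu)x)=[\mu]^{-k}f(x)\,\psi_D(\iota_p(\mu))\psi_D(x)=[\mu]^{-k-h(p+1)}(f\psi_D)(x),
\end{equation*}
where $\psi_D(\iota_p(\mu))=[\mu]^{-h(p+1)}$ was computed in the proof of \autoref{psiD}. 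Hence $f\psi_D\in\MM_{k+h(p+1)}(\Omega)$.

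\emph{Bijectivity.} The function $\psi_D$ takes values in $\fpbar^\times$, so it is nowhere vanishing. The character $\psi^{-1}=\chi^{-h}\varepsilon^{-1}$ is again defined modulo $pN$, and $(\psi^{-1})_D=\psi_D^{-1}$, because $(\psi^{-1})'=(\psi')^{-1}$ by construction in \autoref{galois_char_loc_const}. Applying the previous paragraph to $\psi^{-1}$ shows that multiplication by $\psi_D^{-1}$ maps $\MM_{k+h(p+1)}(\Omega)$ into $\MM_k(\Omega)$. These two maps are mutually inverse linear maps of vector spaces.

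\emph{Hecke equivariance.} This is \autoref{hecke_prod}, which gives $T_\ell(f\psi_D)=\psi(\ell)\psi_D T_\ell(f)$. The right-hand side is exactly the image under multiplication by $\psi_D$ of the twisted operator $\psi(\ell)T_\ell$ applied to $f$, which is the action of $T_\ell$ on $\MM_k(\Omega)[\psi]$. The lemma holds for all $\ell\nmid p\delta N$, since $f$ has level $N$.

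The only genuinely delicate point is the level invariance in the $\Omega_1^D(N)$ case. There $\Nm(U_1^D(N))$ lands only in $U(p)$, not in $U(pN)$, and this is precisely why the hypothesis $\varepsilon=\mathbf{1}$ is needed; \autoref{psiD} already handles it. Everything else is formal from multiplicativity of $\psi_D$.
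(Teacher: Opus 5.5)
Your proposal is correct and follows the paper's proof. Well-definedness and Hecke-equivariance come from \autoref{psiD} and \autoref{hecke_prod}, and bijectivity from applying the same results to $\psi^{-1}=\chi^{-h}\varepsilon^{-1}$, whose $(\psi^{-1})_D=\psi_D^{-1}$ gives the inverse map; you simply write out the multiplicativity checks for level invariance and weight that the paper leaves implicit.
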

\begin{proof}
  Let $\Omega=\Omega^D(N)$, or $\Omega=\Omega_1^D(N)$ if $\varepsilon=\mathbf{1}$.

  % We start by noting that $\psi_D$ takes only nonzero values, so multiplication by $\psi_D$ is injective.
  % The Hecke equivariance is given by \autoref{hecke_prod}, so it remains to prove surjectivity.

  % Let $a\geq 1$ be such that $ah$ is divisible by $p-1$, so that $k+ah(p+1)\equiv k\mod{p^2-1}$.
  % Taking the corresponding $a$-fold composition of maps we get
  % \begin{equation*}
  %   \MM_k(\Omega)[\psi^a]\to \MM_{k+ah(p+1)}(\Omega)=\MM_k(\Omega),
  % \end{equation*}
  % which is a bijection, so our original map is a bijection as well.

  From \autoref{psiD} and \autoref{hecke_prod} we know that multiplication by $\psi_D$ is a Hecke-equivariant homomorphism $\MM_k(\Omega)[\psi]\to \MM_{k+h(p+1)}(\Omega)$.

  Noting that we may apply the same results to $\psi^{-1}=\chi^{-h}\varepsilon^{-1}$, we get that multiplication by $(\psi^{-1})_D$ is the inverse to multiplication by $\psi_D$.
\end{proof}

A crucial special case is that of the mod $p$ cyclotomic character $\chi$; it is defined modulo $p$, hence modulo $pN$ for every $N$, with $\varepsilon=\mathbf{1}$.
Therefore
\begin{cor}\label{twist_by_cyclo}
  Let $\Omega=\Omega^D(N),\Omega_1^D(N)$.
  Multiplication by $\chi_D\in\MM_{p+1}(\Omega)$ is a Hecke-equivariant isomorphism
  \begin{equation*}
    \MM_k(\Omega)[\chi]\to \MM_{k+p+1}(\Omega).
  \end{equation*}
\end{cor}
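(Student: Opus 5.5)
This is a direct specialisation of \autoref{twist_iso} to $\psi=\chi$, so I will check that the cyclotomic character satisfies the hypotheses of that theorem in both level settings and then read off the weight shift.

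First, $\chi\st(\ZZ/p\ZZ)^\times\to\fpbar^\times$, $\chi(x)=x$, cuts out $\QQ(\zeta_p)$. So it is defined modulo $p$, and hence modulo $pN$ for every $N$, since $p\mid pN$. In the decomposition $\psi=\chi^h\varepsilon$ used before \autoref{psiD}, we take $h=1$ and $\varepsilon=\mathbf{1}$, the trivial character modulo $N$.

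Next, I apply \autoref{twist_iso} with these data. The first part of the theorem gives the Hecke-equivariant isomorphism $\MM_k(\Omega^D(N))[\chi]\to\MM_{k+(p+1)}(\Omega^D(N))$, given by multiplication by $\chi_D$. Because $\varepsilon=\mathbf{1}$, the second part applies as well and gives the same statement for $\Omega_1^D(N)$. In both cases the weight shift is $h(p+1)=p+1$. Finally, \autoref{psiD} with $h=1$ places $\chi_D$ in $\MM_{p+1}(\Omega^D(N))$, and in $\MM_{p+1}(\Omega_1^D(N))$ because $\varepsilon=\mathbf{1}$. This is exactly the membership $\chi_D\in\MM_{p+1}(\Omega)$ asserted in the corollary.

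There is no real obstacle here. The only point to watch is that the $\Omega_1^D(N)$ case genuinely requires $\varepsilon$ to be trivial. This holds because the conductor of $\chi$ is $p$, with prime-to-$p$ part equal to $1$.
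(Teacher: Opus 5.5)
Your proposal is correct and follows the paper's own argument: the corollary is the specialisation of \autoref{twist_iso} to $\psi=\chi$. Since $\chi$ is defined modulo $p$ (hence modulo $pN$) with $h=1$ and $\varepsilon=\mathbf{1}$, both the $\Omega^D(N)$ and $\Omega^D_1(N)$ cases apply with weight shift $p+1$, and \autoref{psiD} gives $\chi_D\in\MM_{p+1}(\Omega)$.
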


% \begin{remark}\label{twist_by_cyclo_level_1}
%   As $\chi_D$ has level $1$, we also have $\chi_D\in \MM_{p+1}(\Omega^D_1(1))$ and obtain, as above, a Hecke-equivariant isomorphism $\MM_k(\Omega^D_1(N))[\chi]\to \MM_{k+p+1}(\Omega^D_1(N))$.
% \end{remark}

Combining \autoref{MM_periodic}, \autoref{MM_dihedral}, and \autoref{twist_by_cyclo}, we obtain
\begin{thmb}
  Let $\Omega=\Omega^D(N),\Omega_1^D(N)$.
  For every $k\in\ZZ$ there are explicit Hecke-equivariant isomorphisms
  \begin{equation*}
    \MM_k(\Omega)\cong \MM_{k+p^2-1}(\Omega),\quad
    \MM_k(\Omega)\cong \MM_{pk}(\Omega),\quad
    \MM_k(\Omega)[\chi]\cong \MM_{k+p+1}(\Omega),
  \end{equation*}
  where $\chi$ denotes the mod $p$ cyclotomic character.
\end{thmb}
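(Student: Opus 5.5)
The plan is to assemble Theorem B directly from the three structural results already established in this section; each of the three isomorphisms comes from one earlier result.

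\emph{First isomorphism.} I will invoke \autoref{MM_periodic}. The space $\MM_k(\Omega)$ is defined by the condition $f(\iota_p(\mu)x)=[\mu]^{-k}f(x)$ for $[\mu]\in\fps^\times$. Since $[\mu]^{p^2-1}=1$, this condition depends only on $k$ modulo $p^2-1$. So $\MM_k(\Omega)$ and $\MM_{k+p^2-1}(\Omega)$ are literally the same subspace of $\MM(\Omega)$, with the same Hecke operators, and the identity map is the required Hecke-equivariant isomorphism.

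\emph{Second isomorphism.} I will use \autoref{MM_dihedral}: the map $f\mapsto\iota_p(\pi)\cdot f$ sends $\MM_k(\Omega)$ into $\MM_{pk}(\Omega)$. This rests on \autoref{dihedral}, which says that conjugation by $\pi$ acts as Frobenius on $\oh_p^\times/\oh_p^\times(\pi)\cong\fps^\times$.
\begin{itemize}
\item Injectivity holds because translation by $\iota_p(\pi)$ is a bijection of $\MM(\Omega)$.
\item Surjectivity follows by composing twice: this gives an injective endomorphism of the finite-dimensional space $\MM_{p^2k}(\Omega)=\MM_k(\Omega)$, using the first isomorphism, and such an endomorphism is bijective.
\item Hecke equivariance holds because $T_\ell$ acts only at the place $\ell$ while $\pi$ acts only at $p$, so the two actions commute.
\end{itemize}
One point needs checking: $\iota_p(\pi)\cdot f$ must still be left-invariant under $U^D(N)$ (respectively $U^D_1(N)$). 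This reduces to $\pi$ normalising $\oh_p^\times(\pi)$, which is the first half of \autoref{dihedral} applied modulo $\pi$.

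\emph{Third isomorphism.} I will apply \autoref{twist_by_cyclo}, which is the special case $\psi=\chi$ (so $h=1$ and $\varepsilon=\mathbf{1}$) of \autoref{twist_iso}. This case applies to both $\Omega^D(N)$ and $\Omega^D_1(N)$, since $\chi$ is defined modulo $p$. The ingredients are:
\begin{itemize}
\item $\chi_D$ lies in $\MM_{p+1}(\Omega)$, by \autoref{psiD}.
\item $\chi_D$ is multiplicative, so the product of a weight-$k$ form with $\chi_D$ has weight $k+p+1$.
\item Multiplication by $\chi_D$ intertwines $T_\ell$ with $\chi(\ell)T_\ell$, by \autoref{hecke_prod}.
\item Multiplication by $(\chi^{-1})_D$ is a two-sided inverse, since $\chi_D(\chi^{-1})_D=1$ pointwise.
\end{itemize}

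No step is a genuine obstacle, since each is a citation. The only place I expect to need care is matching the twist convention. $\MM[\chi]$ replaces $T_\ell$ by $T_\ell\chi(\ell)$, and \autoref{hecke_prod} gives $T_\ell(f\chi_D)=\chi(\ell)\chi_D T_\ell f$. These two match, so the map in the third isomorphism is $\MM_k(\Omega)[\chi]\to\MM_{k+p+1}(\Omega)$ as stated, not its inverse.
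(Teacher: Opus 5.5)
Your proposal is correct and takes the same route as the paper, which proves Theorem B simply by combining \autoref{MM_periodic}, \autoref{MM_dihedral}, and \autoref{twist_by_cyclo}; the arguments you recap (the identity map, translation by $\iota_p(\pi)$ with the finite-dimensionality argument, and multiplication by $\chi_D$ with inverse $(\chi^{-1})_D$) are exactly those behind the cited results. Your explicit check that $\pi$ normalises $\oh_p^\times(\pi)$, so that $\iota_p(\pi)\cdot f$ is again a function on $\Omega$, is a point the paper leaves implicit. Your reading of the twist convention is also the correct one.
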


\section{A generalised Deuring correspondence}

%In the previous section, we defined systems of Hecke eigenvalues associated to automorphic forms for the quaternion algebra $B^\times$ in a geometric way - by associating to $B^\times$ a Shimura curve $\XX$. On the other hand, our study of locally constant functions for $D^\times$ has been purely algebraic. Here we give a geometric interpretation of these locally constant functions for $D^\times$ compatible with that for $B^\times$. More precisely, our mod $p$ locally constant functions for $D^\times$ can be viewed as mod $p$ automorphic forms on a certain Shimura subvariety of $\XX$, its supersingular locus. We begin by describing properties of supersingular false elliptic curves, then prove a Deuring correspondence relating the sets $\Omega^D(N), \Omega^D_1(N)$ to the supersingular locus of $\XX(N),\XX_1(N)$.

Following the ideas of \cite{serre-letters}, we consider the restrictions of the mod $p$ modular forms with respect to $B^\times$ to the supersingular locus of the Shimura curve $\XX$. This supersingular locus is a Shimura subvariety closed under the Hecke operators on $\XX$. The restrictions of the mod $p$ modular forms with respect to $B^\times$ may then be viewed as mod $p$ automorphic forms for $D^\times$; these are locally constant functions defined on $D^\times(\AA)$. As $D^\times(\RR)$ is connected, these locally constant functions factor through $D^\times(\AA_f)$ and may be viewed as functions in $\MM(\Omega)$ for an appropriate $\Omega$. This is explicitly realised through a generalisation of the classical Deuring correspondence for supersingular elliptic curves.

The supersingular false elliptic curves are defined as the zero locus of the Hasse invariant for $B^\times$. For a more elementary definition, we make use of \cite[Proposition 5.2]{Mil1}:

\begin{prop}
  Any false elliptic curve over $\fpbar$ is isogenous to the square of an elliptic curve over $\fpbar$.
\end{prop}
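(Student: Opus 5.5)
The plan is to use the $\OO_B$-action to split the false elliptic curve into two isomorphic pieces, each an elliptic curve, and conclude that $A$ is isomorphic, hence isogenous, to $E\times E$. Let $(A,\iota)$ be a false elliptic curve over $\fpbar$. Since $p\nmid\delta$, we have $\OO_B\otimes\ZZ_p\cong\Mat_2(\ZZ_p)$, but the idempotents $e_1,e_2$ only live in $\OO_B\otimes\ZZ_p$ (or mod $p$), not in $\OO_B$. So we cannot split $A$ directly with them. Instead we work up to isogeny: $\End^0(A)=\End(A)\otimes\QQ$ contains $B$, and we look for a splitting there.

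The first step is to use the $p$-divisible group $A[p^\infty]$, which carries an action of $\OO_B\otimes\ZZ_p\cong\Mat_2(\ZZ_p)$. The idempotents $e_1,e_2$ therefore decompose it as $A[p^\infty]\cong G\times G$, where $G=e_2A[p^\infty]$ has height $2$ and dimension $1$. This controls the Newton polygon of $A$: it is either that of an ordinary elliptic curve doubled, or a supersingular one doubled.

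The second step is to apply Tate's isogeny theorem over a finite field. Descend $(A,\iota)$ to some $\FF_q$; this is possible since $A$ and its finitely many generators of $\iota(\OO_B)$ are defined over a finite field. By Honda--Tate, $A$ is isogenous to a product of simple abelian varieties. The centraliser of $B$ in $\End^0(A)$, together with $B$, forces the following. Let $\pi_A$ be the Frobenius; it commutes with $B$, so $\QQ(\pi_A)\otimes B\hookrightarrow\End^0(A)$. Since $\dim A=2$, we have $[\QQ(\pi_A):\QQ]\le 4$, and a dimension count in $\End^0(A)$ (of dimension $\le 16$) shows that $\QQ(\pi_A)$ is either $\QQ$ or imaginary quadratic. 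In the first case $\pi_A=\pm\sqrt q$, and after extending the field $A$ is isogenous to $E^2$ with $E$ supersingular. In the second case the characteristic polynomial of $\pi_A$ on $V_\ell A$ is the square of a quadratic, so by Tate $A\sim E^2$ with $E$ the elliptic curve with that Frobenius, which exists by Honda--Tate.

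The main obstacle is excluding the case where $A$ is simple with $\QQ(\pi_A)$ a quartic CM field. For this I would argue that $B$ (a division algebra, noncommutative) embeds into the commutant of $\pi_A$. If $\QQ(\pi_A)$ had degree $4$, then $\End^0(A)$ would be $\QQ(\pi_A)$ itself, which is commutative. That is a contradiction, since it must contain the quaternion algebra $B$. With the quartic case ruled out, we are left with the cases where $\QQ(\pi_A)$ is $\QQ$ or quadratic, both handled in the second step, and in each $A$ is isogenous to $E\times E$ over $\fpbar$. This matches \cite[Proposition 5.2]{Mil1}, which we may in any case cite.
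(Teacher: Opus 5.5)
Your route (descend to $\FF_q$, then use Tate's theorem and Honda--Tate) differs from the paper, which gives no argument and just cites \cite[Proposition 5.2]{Mil1}. As written, though, your case analysis has a genuine gap.

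\textbf{The dimension count does not give your dichotomy.} It only gives $4[\QQ(\pi_A):\QQ]\leq\dim_\QQ\End^0(A)$, and for some Weil numbers this is an equality. Take $q=p$ and $\pi_A=\sqrt p$. The Honda--Tate object is a simple surface whose endomorphism algebra is the quaternion algebra over $\QQ(\sqrt p)$ ramified exactly at the two real places. Its $\QQ$-dimension is $8=\dim_\QQ\big(\QQ(\sqrt p)\otimes B\big)$, so nothing you wrote excludes a real quadratic $\QQ(\pi_A)$.

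\textbf{Honda--Tate need not supply your $E$.} In the imaginary quadratic case you assume there is an elliptic curve over $\FF_q$ with Frobenius $\pi_A$. This fails whenever the invariants of $\QQ(\pi_A)$ at the places above $p$ are $1/2$. For example, take $q=p^2$, $p\equiv 1\pmod 4$, $\pi_A=p\sqrt{-1}$. There is no elliptic curve over $\FF_{p^2}$ of trace $0$. The simple object is instead a surface whose endomorphism algebra is a quaternion algebra over $\QQ(\sqrt{-1})$ ramified at the two primes above $p$, again of $\QQ$-dimension $8$.

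The quartic case you call the main obstacle is in fact the easy one. You also skip the non-isotypic case. That one is harmless: $B$ would have to embed into the endomorphism algebra of an elliptic curve factor, which is a field or a definite quaternion algebra.

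\textbf{How to repair it.} Both bad cases are supersingular, so your first step, which you never use, supplies the missing input. Since $A[p^\infty]\cong G\times G$ with $G$ of height $2$ and dimension $1$, all slopes of $A$ are $1/2$, or else $A$ is ordinary.
\begin{itemize}
\item \emph{Supersingular case.} $\pi_A^2/q$ is an algebraic integer (its valuations above $p$ vanish) all of whose conjugates have absolute value $1$, so it is a root of unity. After a finite extension of $\FF_q$ you get $\pi_A\in\QQ$. This is harmless because the statement is over $\fpbar$, and you are then in your first case.
\item \emph{Ordinary case.} Every simple isogeny factor has trivial Brauer invariants, hence commutative endomorphism algebra $\QQ(\pi)$. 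Since $B$ is simple and noncommutative, it embeds into every factor $\Mat_{n_i}(K_i)$ of $\End^0(A)$. So every $n_i\geq 2$, and $\dim A=2$ forces $A\sim E^2$ with $E$ ordinary.
\end{itemize}
Alternatively, keep your split over the original $\FF_q$ and kill the bad cases by local invariants. There $B\otimes\QQ(\pi_A)\to\End^0(A)$ is an isomorphism of quaternion algebras over $\QQ(\pi_A)$. But $B\otimes\QQ(\pi_A)$ is split at the real places, since $B$ is indefinite, and at the places above $p$, since $p\nmid\delta$. In the two examples above, $\End^0(A)$ is ramified at exactly those places, so neither can occur.
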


\begin{defn}
  We say that a false elliptic curve $(A,\iota)$ over $\fpbar$ is supersingular if $A$ is isogenous to the square of a supersingular elliptic curve.
\end{defn}

\begin{prop}\label{super false prop}
  We have the following properties:
  \begin{enumerate}[label=(\roman*)]
    \item The isogeny class of any supersingular false elliptic curve consists exactly of the supersingular false elliptic curves.
    \item The endomorphism algebra $\End_{\OO_B}(A) \otimes \QQ$ of a supersingular false elliptic curve $(A,\iota)$ is isomorphic to the quaternion algebra $D$.
    \item Every supersingular false elliptic curve $(A,\iota)$ has a canonical $\fps$-structure where the $p^2$ Frobenius endomorphism acts by multiplication by $[-p]$. In other words, $(A,\iota)$ is isomorphic (over $\fpbar$) to (the base change to $\fpbar$ of) a supersingular false elliptic curve $(A',\iota')$ defined over $\fps$ on which the $p^2$ Frobenius endomorphism acts by $[-p]$.
    \item The Hasse invariant $\HH$ with respect to $B^\times$ has (simple) zeros exactly at the supersingular false elliptic curves.
  \end{enumerate}
\end{prop}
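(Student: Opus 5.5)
We prove the four properties in order, reducing each to the corresponding facts about supersingular elliptic curves through the isogeny $A\sim E^2$ of \cite[Proposition 5.2]{Mil1}.

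For (i), isogeny is an equivalence relation on false elliptic curves. If $A\sim E^2$ with $E$ supersingular and $A'\sim A$, then $A'\sim E^2$, so $A'$ is supersingular. Conversely, if $A'$ is supersingular then $A'\sim E'^2$. Since all supersingular elliptic curves over $\fpbar$ are isogenous, $E'^2\sim E^2$, and hence $A'\sim A$ as abelian surfaces. To upgrade this to an $\OO_B$-equivariant isogeny, note that the $\OO_B$-equivariant homomorphisms $A\to A'$ tensored with $\QQ$ form $\Hom_{B}(B\otimes V,B\otimes V')$, where $\End(E^2)\otimes\QQ=\Mat_2(D_{p,\infty})$. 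Both $A$ and $A'$ are then simple modules of rank one over the simple algebra $\Mat_2(D_{p,\infty})\otimes B^{\op}$, so they are isomorphic up to isogeny. Concretely, we would invoke Milne's structure result that $\End(A)\otimes\QQ\cong \Mat_2(D_{p,\infty})$ and apply the Skolem--Noether theorem to the two embeddings of $B$.

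For (ii), $\End_{\OO_B}(A)\otimes\QQ$ is the centraliser of $B$ in $\End(A)\otimes\QQ\cong \Mat_2(D_{p,\infty})$, which is a central simple $\QQ$-algebra of dimension $16$. By the double centraliser theorem, the centraliser $C$ is a quaternion algebra with $[C]=[\Mat_2(D_{p,\infty})]-[B]$ in $\Br(\QQ)$. The algebra $D_{p,\infty}$ is ramified exactly at $\{p,\infty\}$ and $B$ exactly at the primes dividing $\delta$. Since $\gcd(p,\delta)=1$, the algebra $C$ is ramified exactly at $\infty$, at $p$, and at the primes dividing $\delta$. The number of ramified places is even, so $C$ is definite of discriminant $p\delta$, and therefore $C\cong D$.

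For (iii), choose a supersingular elliptic curve $E_0/\fps$ whose $p^2$-Frobenius is $[-p]$; such a curve exists by Deuring/Honda--Tate, since every endomorphism of $E_0$ over $\fpbar$ is then defined over $\fps$. Put $A_0=E_0^2$ and equip it with an $\OO_B$-action defined over $\fps$. This is possible because $\OO_B\hookrightarrow \Mat_2(\End(E_0))$ with $\End(E_0)$ a maximal order in $D_{p,\infty}$: an embedding of $B$ into $\Mat_2(D_{p,\infty})$ exists because the latter is split at every place where $B$ ramifies, and it can be arranged integrally. Every supersingular $(A,\iota)$ is then $\OO_B$-isogenous to $(A_0,\iota_0)$ by (i), say via $A=A_0/G$. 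The $p^2$-Frobenius of $A_0$ is the central element $[-p]$, which maps $G$ onto itself. Hence $G$ is defined over $\fps$, and the descended Frobenius on $A_0/G$ is again $[-p]$. Canonicity follows because an $\fps$-structure is determined by its Frobenius: it is the one for which Frobenius equals $[-p]$.

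For (iv), which we expect to be the main obstacle, we compute the Hasse invariant via the $p$-divisible group. Since $\OO_B\otimes\ZZ_p\cong\Mat_2(\ZZ_p)$, we have $A[p^\infty]\cong \mathcal{G}^2$ via $e_2$, and $\fabs^*$ on $e_2H^1(A,\OO_A)$ is the Hasse invariant of the one-dimensional height-two group $\mathcal{G}$. That invariant vanishes exactly when $\mathcal{G}$ is connected, i.e.\ supersingular, and this happens exactly when $A$ is supersingular, because ordinary false elliptic curves have étale part. For simplicity of the zeros, we would mimic the classical argument: by Theorem~\ref{KS2} and Serre--Tate deformation theory, the universal deformation of $\mathcal{G}$ over $\fpbar[[t]]$ has Hasse invariant vanishing to order exactly one, because the local deformation ring of $\XX$ at a point agrees with that of $\mathcal{G}$. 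The corresponding lemma is \cite[Lemma 7]{DT1}.
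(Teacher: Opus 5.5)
Your parts (i) and (ii) follow the paper's argument: Skolem--Noether promotes an isogeny of the underlying surfaces to an $\OO_B$-linear one (the paper's appeal to \cite[page 179]{Mil1}), and the double centraliser theorem plus the Brauer class computation gives (ii). Part (iii) also follows the paper's route (after \cite[Lemma 3.2.1]{BGJGP1}), but one step fails as written.

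\textbf{The gap in (iii).} You take an arbitrary $\OO_B$-isogeny $A_0\to A$ with kernel $G$, and deduce from ``$[-p]$ maps $G$ onto itself'' that $G$ is defined over $\fps$. There are two problems.
\begin{enumerate}
\item If $p$ divides the order of $G$, then $[-p]$ is not onto $G$: it kills the nonzero group $G\cap A_0[p]$.
\item More importantly, stability under the Frobenius endomorphism gives no descent information for the infinitesimal part. Every copy of $\alpha_p$ in $E_0^2$ is killed by $[-p]$, yet only finitely many of them are defined over $\fps$. The criterion you are implicitly using is that the arithmetic $p^2$-Frobenius acts on $A_0(\fpbar)$ through the Frobenius endomorphism. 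That criterion only sees $\fpbar$-points, so it applies only to \'etale $G$.
\end{enumerate}
The missing step is the one the paper takes from the proof of \cite[Lemma 9]{DT1}. An $\OO_B$-stable connected subgroup scheme of $A_0$ is a Frobenius kernel; your own Morita reduction shows this, since the connected finite subgroup schemes of the one-dimensional formal group $\mathcal{G}$ are the $\mathcal{G}[F^r]$. This is exactly where $\OO_B$-stability enters: the stray copies of $\alpha_p$ above are not $\OO_B$-stable. So you may factor powers of Frobenius out of the isogeny and assume its degree is prime to $p$. The source may then become the twist $A_0^{(p)}$, which is again defined over $\fps$ with $p^2$-Frobenius $[-p]$. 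Once $G$ is \'etale of order prime to $p$, $[-p]$ permutes $G(\fpbar)$, and the rest of your argument goes through, including that the Frobenius of $A_0/G$ is $[-p]$.

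\textbf{Part (iv) takes a different route.} The paper argues in three steps. First, $\fabs^*$ vanishes on $e_2H^1(A,\OO_A)$ iff it vanishes on all of $H^1(A,\OO_A)$, because $e_1,e_2$ are conjugate. Second, it transfers this to $E^2$ along an isogeny. Third, it concludes by K\"unneth, quoting \cite[Lemma 5.2]{Kas1} for simplicity of the zeros.

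You instead use Morita equivalence for $\OO_B\otimes\ZZ_p\cong\Mat_2(\ZZ_p)$ to reduce to the one-dimensional height-two group $\mathcal{G}=e_2A[p^\infty]$. For $\mathcal{G}$, vanishing of the Hasse invariant means connectedness, i.e.\ $p$-rank $0$, and you finish with isogeny invariance of the $p$-rank. This costs some Dieudonn\'e theory, to identify $\fabs^*$ on $e_2H^1(A,\OO_A)$ with the Hasse invariant of $\mathcal{G}$. But it is more robust. Pulling $\fabs^*$ back along an inseparable isogeny can fail to be injective on $H^1(\OO)$, so the paper's transfer step tacitly needs an isogeny inducing an isomorphism on $H^1(\OO)$. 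Your $p$-rank argument needs nothing of the kind.

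Your Serre--Tate sketch for simple zeros is the standard argument behind the cited lemma and is fine as a plan: deformations of $(A,\iota)$ are those of $\mathcal{G}$, whose universal deformation over $\fpbar[[t]]$ has Hasse invariant equal to $t$ times a unit. One correction: \cite[Lemma 7]{DT1} is the Kodaira--Spencer isomorphism, not the simplicity statement.
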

\begin{proof}
  \begin{enumerate}[label=(\roman*)]
    \item This follows from the analogous fact for supersingular elliptic curves, as well as the fact that being isogenous is an equivalence relation on false elliptic curves.
      %(it is an equivalence relation on abelian varieties, and by an application of the Skolem-Noether theorem, if the underlying abelian varieties of the false elliptic curves are isogenous then there exists an isogeny of false elliptic curves - see \cite{Mil1} page 179).
    \item In more generality see \cite[Proposition 5.2]{Mil1}. Suppose $(A,\iota)$ is isogenous to $E^2$ for some supersingular elliptic curve $E$. Then $E$ has endomorphism algebra $D'$, where $D'$ is the unique quaternion algebra over $\QQ$ ramified at $\{p,\infty\}$, and so we have $\End(A) \otimes \QQ \cong \Mat_2(D')$. Applying the double centraliser theorem to $B \subset \End(A) \otimes \QQ$ and its centraliser $\End_{\OO_B}(A) \otimes \QQ$, we deduce that
      \begin{equation*}
        B \otimes_{\QQ} (\End_{\OO_B}(A) \otimes \QQ) \cong \Mat_2(D').
      \end{equation*}
      In other words, in the Brauer group $\Br(\QQ)$ we have $[B] \cdot [\End_{\OO_B}(A) \otimes \QQ] = [D']$. Thus $\End_{\OO_B}(A) \otimes \QQ$ is the unique definite quaternion algebra over $\QQ$ ramified at the primes dividing $p\delta $.
    \item Our argument is based on the proof of \cite[Lemma 3.2.1]{BGJGP1}. Let $E$ be any supersingular elliptic curve with canonical $\fps$-structure (in fact by Honda-Tate theory, one can find such an $E$ defined over $\fp$). Then $E_{\fpbar}^2$ is a supersingular false elliptic curve under any choice of embedding $\OO_B \hookrightarrow \Mat_2(\OO_D)$. There exists an isogeny $\phi\st E_{\fpbar}^2 \to A$ of supersingular false elliptic curves by part (i). By the proof of \cite[Lemma 9]{DT1} we can assume that $\phi$ has degree coprime to $p$. Then the kernel $K$ of $\phi$ is stable both under the action of $\OO_B \subset \End(E_{\fpbar}^2)$ (by definition of $\phi$ as an isogeny of false elliptic curves) and also under $[-p]$, the square of the Frobenius. Hence $K$ is defined over $\fps$ and $(A' = E^2/K,\iota)$ is a false elliptic curve over $\fps$ with $A_{\fpbar}' \cong A$ an isomorphism of supersingular false elliptic curves (we require $\phi$ to be separable for the induced map $A_{\fpbar}' \to A$ to be an isomorphism). The $p^2$ Frobenius acts on $A'$ by $[-p]$.

    \item By definition, the Hasse invariant with respect to $B^\times$ vanishes at a false elliptic curve $(A,\iota)$ if and only if $F_{abs}^*$ is the zero map $e_2 \cdot H^1(A,\OO_A) \to e_2 \cdot H^1(A,\OO_A)$. Since the idempotents $e_1,e_2$ are conjugate, this occurs if and only if $F_{abs}^*$ is zero on $H^1(A,\OO_A)$. Now $A$ is isogenous to $E^2$ for some elliptic curve $E$, and since the absolute Frobenius commutes with isogenies, we see that $F_{abs}^*$ is zero on $H^1(A,\OO_A)$ if and only if it is zero on $H^1(E^2,\OO_{E^2})$. Then this is zero if and only if $E$ is supersingular by the Kunneth formula. The zeros are simple by \cite[Lemma 5.2]{Kas1}.\qedhere
      % stated for V_1 structure but reference in Diamond Taylor works for any U-structure.
  \end{enumerate}
\end{proof}

\begin{remark}
  Since multiplication by $[-p]$ commutes with any isogeny, (iii) tells us that any isogeny of supersingular false elliptic curves with canonical $\fps$-structure is defined over $\fps$.
\end{remark}

Something stronger than part (iii) above holds:

\begin{prop}
  Any supersingular false elliptic curve $(A,\iota)$ over $\fpbar$ is superspecial, meaning it is \textbf{isomorphic} to the square of a supersingular elliptic curve.
\end{prop}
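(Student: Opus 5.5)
Throughout, let $(A,\iota)$ be a supersingular false elliptic curve over $\fpbar$. The plan is to show that $A$ is superspecial, i.e.\ that the $a$-number of $A$ equals $2=\dim A$. Once that is known, the classical theorem of Oort (a superspecial abelian variety of dimension $g\geq 2$ over an algebraically closed field is isomorphic to $E^g$ for any supersingular elliptic curve $E$) gives $A\cong E^2$ as abelian surfaces. So everything reduces to showing that $\alpha_p$-rank is maximal, or equivalently that the Frobenius $F$ acting on the Dieudonn\'e module $M=\mathbb{D}(A[p])$ (a $4$-dimensional $\fpbar$-space) satisfies $FM=VM$ with $\dim_{\fpbar}(M/FM)=2$ equal to $a(A)$.

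\textbf{Step 1: exploit the $\OO_B$-action.} Since $p\nmid\delta$, the fixed isomorphism $\OO_B\otimes\ZZ_p\cong\Mat_2(\ZZ_p)$ makes $M$ a module over $\Mat_2(\fp)\otimes\fpbar=\Mat_2(\fpbar)$, commuting with $F$ and $V$. By Morita equivalence, using the conjugate idempotents $e_1,e_2$ exactly as in the construction of $\om$, we get $M\cong N\oplus N$ with $N=e_2M$, a $2$-dimensional Dieudonn\'e module, i.e.\ a ``Dieudonn\'e module of a $p$-divisible group of height $2$, dimension $1$''. More precisely, the $p$-divisible group decomposes as $A[p^\infty]\cong G\times G$ with $G=e_2A[p^\infty]$ of height $2$ and dimension $1$.

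\textbf{Step 2: identify $G$.} Supersingularity of $A$ (isogenous to $E^2$) makes $A[p^\infty]$ isoclinic of slope $1/2$, so $G$ is a connected height-$2$, dimension-$1$ $p$-divisible group of slope $1/2$. Over $\fpbar$ such a group is unique up to isomorphism, namely $E[p^\infty]$ for any supersingular elliptic curve $E$. Its Dieudonn\'e module $N$ has $FN=VN$ of codimension $1$. Hence $M=N\oplus N$ has $a(A)=2$, and $A[p^\infty]\cong E[p^\infty]^2$ directly. As an alternative, one can use \autoref{super false prop}(iv): the vanishing of $\fabs^*$ on all of $H^1(A,\OO_A)$ shows that $F$ kills the Lie/$H^1$ part, which is precisely $a(A)=2$.

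\textbf{Step 3: conclude.} Apply Oort's theorem (or Deligne's / Shioda's result that products of supersingular elliptic curves are all isomorphic for $g\geq 2$) to deduce $A\cong E^2$.

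The main obstacle is Step 3 if one wants to avoid quoting Oort. To prove it by hand, I would take an isogeny $\phi\colon E^2\to A$ with $p$-power kernel; the prime-to-$p$ part can be absorbed into $\End(E^2)=\Mat_2(\OO_{D'})$ as in \cite[Lemma 9]{DT1}. I would then show that the kernel corresponds to a sub-Dieudonn\'e module of the form $F^j(N\oplus N)$ up to an $\OO_{D'}$-linear automorphism. Using the isomorphism $A[p^\infty]\cong E[p^\infty]^2$ and Tate's theorem to lift it to a quasi-isogeny, the claim is that the composite $E^2\to A$ can be modified by an element of $\GL_2(D')$ into an isomorphism. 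This step requires class-number-type bookkeeping, since every left $\Mat_2(\OO_{D'})$-lattice of rank one is principal, by strong approximation for $\mathrm{SL}_2(D')$, which is simply connected and noncompact at $p$. I expect this to be the most delicate part, so I would cite Oort's theorem rather than reprove it.
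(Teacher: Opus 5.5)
Your proof is correct. It has the same skeleton as the paper's: both reduce the statement to $a(A)=2$ and then rely on Oort's theorem, together with the Deligne--Shioda fact that for $g\geq 2$ all products of supersingular elliptic curves are isomorphic. The paper uses this implicitly when it writes ``it is equivalent to show that the $a$-number is $2$'', so your ``by hand'' sketch in Step 3 is not needed.

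The difference is in how you obtain $a(A)=2$. The paper uses only two facts:
\begin{itemize}
  \item the bound $1\leq a(A)\leq 2$, where the lower bound comes from $p$-rank $0$;
  \item $\Hom(\alpha_p,A)$ is a module over $\OO_B\otimes\fpbar\cong\Mat_2(\fpbar)$, so its dimension is even.
\end{itemize}
You instead lift the idempotents to $\OO_B\otimes\ZZ_p\cong\Mat_2(\ZZ_p)$ and split $A[p^\infty]\cong G\times G$, with $G$ of height $2$ and dimension $1$. You then identify $G\cong E[p^\infty]$ using the classification of such $p$-divisible groups over $\fpbar$ (Dieudonn\'e--Manin, or Lazard for one-dimensional formal groups).

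This is the same Morita phenomenon as in the paper. Applied to $\Hom(\alpha_p,-)$, it just says $\Hom(\alpha_p,A)\cong\Hom(\alpha_p,G)^{\oplus 2}$. You carry it out on the whole $p$-divisible group instead. That costs a genuine classification theorem. In return you get the stronger local statement $A[p^\infty]\cong E[p^\infty]^2$, and supersingularity enters only through the slope of $G$ rather than through the separate bound $a(A)\geq 1$.

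Your alternative via part (iv) of the preceding proposition is also valid, using $a(A)=g-\rank\bigl(\fabs^*\mid H^1(A,\OO_A)\bigr)$. Note, though, that the real content then sits in (iv). As written, the proof of (iv) transfers the vanishing of $\fabs^*$ along an isogeny, which is legitimate for isogenies inducing isomorphisms on $H^1(\cdot,\OO)$, such as the prime-to-$p$ ones arranged in (iii). So that route is less self-contained than your Step 2.
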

\begin{proof}
  This is based on arguments in \cite{Ghitza2} and \cite{Phi1}. It is equivalent to show that the $a$-number $a(A)=\dim_{\fpbar}\Hom(\alpha_p,A)$ is $2$, where $\alpha_p$ is the finite group scheme $\Spec \fpbar[x]/(x^p)$. Since $A$ is supersingular, we have $a(A) \geq 1$, and since $A$ is of dimension $2$, $a(A) \leq 2$. But $\Hom(\alpha_p,A)$ is a module under $\End(A) \otimes \fpbar$, and hence under $\OO_B \otimes \fpbar \cong \Mat_2(\fpbar)$. But for any field $k$, $k$ is not a $\Mat_2(k)$-module.
\end{proof}
% Hence there is only 1 A up to isomorphism, but still have different choices of iota

We recall the Isogeny Theorem of Tate for abelian varieties.
\begin{defn}
  Let $A$ be an abelian variety over a field $K$ of characteristic prime to $\ell$. We define the $\ell$-adic Tate module of $A$ to be $\Ta_\ell(A) := \varprojlim\limits_n A[\ell^n](\overline{K})$.
\end{defn}

\begin{thm}
  Let $A,A_0$ be abelian varieties over a finite field $K$ of characteristic prime to $\ell$. The natural map
  \begin{equation*}
    \Hom_K(A,A_0) \otimes \ZZ_\ell \to \Hom_{\Gal(\overline{K}/K)}(\Ta_\ell(A),\Ta_\ell(A_0))
  \end{equation*}
  is an isomorphism.
\end{thm}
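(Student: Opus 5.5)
This is Tate's isogeny theorem; the paper quotes it rather than proving it, so the following is a sketch of the standard argument (Tate, 1966). The first step is to show the natural map is injective with torsion-free cokernel. Injectivity follows from the fact that $\Hom(A,A_0)$ is a finitely generated free $\ZZ$-module. More precisely, if $\phi\in\Hom(A,A_0)$ kills $\Ta_\ell(A)$, then $\phi$ kills $A[\ell^n]$ for all $n$. Hence $\phi$ factors through $[\ell^n]$ for every $n$, which forces $\phi=0$. The same factoring argument shows that $\Hom(A,A_0)\otimes\ZZ_\ell$ is saturated in $\Hom(\Ta_\ell A,\Ta_\ell A_0)$. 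This uses the standard lemma that $\Hom\otimes\ZZ_\ell\to\Hom(\Ta_\ell,\Ta_\ell)$ is injective with torsion-free cokernel, proved via the finite-dimensionality argument with degrees as a quadratic form.

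Next, I will reduce to endomorphisms. Replacing $A$ by $A\times A_0$, the statement for $\Hom(A,A_0)$ is a direct summand of the statement for $\End(A\times A_0)$. So it suffices to prove that $\End_K(A)\otimes\QQ_\ell\to\End_{G}(V_\ell A)$ is surjective, where $G=\Gal(\overline K/K)$ and $V_\ell=\Ta_\ell\otimes\QQ$. Saturation then upgrades this rational statement to the integral one.

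The key step, and the main obstacle, is the following. For every $G$-stable $\QQ_\ell$-subspace $W\subseteq V_\ell A$, I want to produce $u\in\End(A)\otimes\QQ_\ell$ with $u(V_\ell A)=W$. The double centralizer theorem then gives surjectivity, once one knows $\End(A)\otimes\QQ_\ell$ is semisimple and $V_\ell$ is a semisimple $G$-module; semisimplicity comes along with the argument below. To construct $u$, consider the lattices $\Ta_\ell A\cap W+\ell^n\Ta_\ell A$. These define finite $G$-stable subgroups $K_n\subseteq A[\ell^n]$, and hence isogenous abelian varieties $B_n=A/K_n$ over $K$. Using polarisations, each $B_n$ admits a polarisation of bounded degree. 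Here I would invoke Zarhin's trick ($(B\times B^\vee)^4$ is principally polarised) to make the degree fixed. There are only finitely many isomorphism classes of principally polarised abelian varieties of fixed dimension over the finite field $K$, because they correspond to finitely many points of a moduli space of finite type. Hence infinitely many $B_n$ are isomorphic to a single $B_m$. Composing the isogenies $A\to B_n\cong B_m\to A$ gives endomorphisms $u_n$ with $u_n(\Ta_\ell A)$ close to the lattice defining $W$. A compactness argument in $\End(A)\otimes\ZZ_\ell$ then gives a limit $u$ with image exactly $W$.

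The finiteness step over a finite field is where the hypothesis on $K$ is used, and it is the part needing the most care. Tate's original proof avoids Zarhin and instead works with $A^{2}$-type tricks together with Mumford's finiteness result.
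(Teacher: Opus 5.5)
The paper gives no argument for this statement beyond the citation \cite{Tat1}, so the only question is whether your outline of Tate's proof holds up. Most of it does: injectivity and saturation, passing to $\End(A\times A_0)$, rational surjectivity via a double-centraliser argument, and the compactness limit producing $u$. The finiteness step, which you correctly flag as the crux, does not work as written.

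First, the claim that each $B_n$ carries a polarisation of bounded degree is unsupported. Pulling a polarisation $\lambda$ of $A$ back along $B_n\to A$ gives degrees that grow with $n$. Pushing $\ell^n\lambda$ down along $A\to B_n=A/K_n$ for all $n$ forces $W$ to be isotropic for the Riemann form of $\lambda$, and even then the degree stays bounded only when $W$ is maximal isotropic.

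Second, Zarhin's trick does not supply this. It puts a principal polarisation on $(B_n\times B_n^\vee)^4$, not on $B_n$. So finiteness of principally polarised abelian varieties of dimension $8\dim A$ over $K$ only shows that the varieties $(B_n\times B_n^\vee)^4$ fall into finitely many isomorphism classes. Abelian varieties do not satisfy cancellation, so the conclusion that infinitely many $B_n$ are isomorphic to a single $B_m$ does not follow.

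You can supply the missing input in either of two ways.

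\begin{enumerate}
\item Invoke the theorem that an abelian variety has only finitely many direct factors up to isomorphism. Direct factors correspond to idempotents of the endomorphism ring, and finiteness follows from Jordan--Zassenhaus. Combined with Zarhin's trick, this gives finiteness of the isomorphism classes in the isogeny class of $A$ over $K$, which is exactly what your compactness argument needs.
\item Run your construction on $Z=(A\times A^\vee)^4$ with the $G$-stable subspace $(W\oplus W^\perp)^4$, where $W^\perp\subseteq V_\ell A^\vee$ is the annihilator of $W$ under the Weil pairing. One checks that the corresponding quotients of $Z$ are precisely $(B_n\times B_n^\vee)^4$, hence principally polarised. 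Project the resulting $u_Z$ to a factor $A$, composing with a polarisation on the $A^\vee$-components. This gives $u_1,\dots,u_8\in\End(A)\otimes\QQ_\ell$ with $\sum_j u_j(V_\ell A)=W$. Replace these by a single idempotent generating the right ideal $\sum_j u_j\,(\End(A)\otimes\QQ_\ell)$.
\end{enumerate}

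Two smaller points. Semisimplicity of $\End(A)\otimes\QQ_\ell$ comes from Poincar\'e reducibility, not from the later argument. The double-centraliser step using only subspaces of $V_\ell A$ works because the Frobenius of $A$ lies in $\End_K(A)$; without that, you must apply the key step to $A\times A$ and the graph of an element of $\End_G(V_\ell A)$.
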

\begin{proof}
  \cite{Tat1}.
\end{proof}

\begin{cor}\label{isog thm false}
  Let $(A,\iota), (A_0,\iota_0)$ be supersingular false elliptic curves with canonical structure over $\fps$. On the underlying abelian varieties we have, for $\ell \neq p$,
  \begin{equation*}
    \Hom_{\fps}(A,A_0) \otimes \ZZ_\ell \cong \Hom_{\Gal(\fpbar/\fps)}(\Ta_\ell(A), \Ta_\ell(A_0)) \cong \Mat_4(\ZZ_\ell).
  \end{equation*}
\end{cor}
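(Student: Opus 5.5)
The first isomorphism is Tate's theorem, quoted just above, applied over $K=\fps$ to the underlying abelian surfaces of $A$ and $A_0$ (recall $\ell\neq p$). So the real content is the second isomorphism: the Galois-equivariant homomorphisms $\Ta_\ell(A)\to\Ta_\ell(A_0)$ make up all of $\Hom_{\ZZ_\ell}(\Ta_\ell(A),\Ta_\ell(A_0))$, and this module is free of rank $16$.

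\textbf{Step 1: the Galois action.} The group $\Gal(\fpbar/\fps)$ is procyclic, topologically generated by the $p^2$-power Frobenius $\sigma$. The action of $\sigma$ on $\Ta_\ell(A)=\varprojlim A[\ell^n](\fpbar)$ equals the action of the relative $p^2$-Frobenius endomorphism of $A/\fps$ on $\fpbar$-points. By \autoref{super false prop}(iii), this endomorphism is multiplication by $[-p]$. Hence $\sigma$ acts on $\Ta_\ell(A)$ as the scalar $-p\in\ZZ_\ell$, and likewise on $\Ta_\ell(A_0)$.

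\textbf{Step 2: everything is equivariant.} Any $\ZZ_\ell$-linear map $\Ta_\ell(A)\to\Ta_\ell(A_0)$ commutes with scalar multiplication by $-p$, hence with $\sigma$. By continuity it then commutes with the whole closure of $\langle\sigma\rangle$, which is the full Galois group. Therefore
\begin{equation*}
  \Hom_{\Gal(\fpbar/\fps)}(\Ta_\ell(A),\Ta_\ell(A_0))=\Hom_{\ZZ_\ell}(\Ta_\ell(A),\Ta_\ell(A_0)).
\end{equation*}

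\textbf{Step 3: rank count.} Since $A$ and $A_0$ are abelian surfaces and $\ell\neq p$, each Tate module is free of rank $4$ over $\ZZ_\ell$. Choosing bases identifies the Hom module with $\Mat_4(\ZZ_\ell)$.

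The only point needing care is Step 1. One must identify the arithmetic Frobenius in $\Gal(\fpbar/\fps)$, acting on $\fpbar$-points, with the geometric $p^2$-Frobenius endomorphism of $A$; this is standard for varieties over finite fields. One should also note that the argument uses only the underlying abelian surfaces, as the statement says, and not the $\OO_B$-actions. The remark that every isogeny between such curves is defined over $\fps$ is consistent with this: both the Galois side and the Hom side impose no extra constraint.
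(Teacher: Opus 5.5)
Your argument is correct and is exactly the reasoning the paper leaves implicit: it states this corollary without proof right after quoting Tate's theorem. The first isomorphism is Tate's theorem over $\fps$, and the second holds because, by \autoref{super false prop}(iii), the $p^2$-Frobenius generating $\Gal(\fpbar/\fps)$ acts on each free rank-$4$ Tate module as the scalar $-p$, so every $\ZZ_\ell$-linear map between them is Galois-equivariant.
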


%%%%%%%%%%%%%%%%%%%%%%%%%%%%

\begin{cor}
  Let $(A_0,\iota_0)$ be a supersingular false elliptic curve with canonical structure over $\fps$, and let $\OO= \End_{\OO_B}(A_0)$. For any $\ell \nmid p\delta$, fix an isomorphism $\Ta_\ell(A_0) \cong \Mat_2(\ZZ_\ell)$ such that $\OO_B \otimes \ZZ_\ell \cong \Mat_2(\ZZ_\ell)$ acts by left multiplication. Then we have an isomorphism
  \begin{equation*}
    \OO \otimes \ZZ_\ell \cong \Mat_2(\ZZ_\ell)^{\op}
  \end{equation*}
  induced by right multiplication (to commute with the left action of $\OO_B$) by $\Mat_2(\ZZ_\ell)$ on $\Ta_\ell(A_0)$. We will identify $\Mat_2(\ZZ_\ell)^{\op} \cong \Mat_2(\ZZ_\ell)$ by taking the transpose $B \mapsto B^T$.
\end{cor}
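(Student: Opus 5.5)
The plan is to deduce the statement from \autoref{isog thm false}, applied with $A=A_0$, by passing to the centraliser of $\OO_B\otimes\ZZ_\ell$ on both sides. Tate's theorem gives
\begin{equation*}
  \End_{\fps}(A_0)\otimes\ZZ_\ell \cong \End_{\Gal(\fpbar/\fps)}\big(\Ta_\ell(A_0)\big).
\end{equation*}
By part (iii) of \autoref{super false prop}, the $p^2$-Frobenius acts on $A_0$ as $[-p]$. So the Galois action on $\Ta_\ell(A_0)$ is by the scalar $-p$, and every $\ZZ_\ell$-linear endomorphism of $\Ta_\ell(A_0)\cong\ZZ_\ell^4$ commutes with it. Also, by the remark following \autoref{super false prop}, every endomorphism of $A_0$ over $\fpbar$ is already defined over $\fps$, so the left-hand side is $\End(A_0)\otimes\ZZ_\ell$. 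The natural map is therefore an isomorphism $\End(A_0)\otimes\ZZ_\ell\cong\End_{\ZZ_\ell}(\Ta_\ell(A_0))$, which is $\Mat_4(\ZZ_\ell)$, as in \autoref{isog thm false}.

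Next I take $\OO_B$-linear parts. The subring $\OO=\End_{\OO_B}(A_0)$ is the commutant of $\iota_0(\OO_B)$ in $\End(A_0)$. Since $\ZZ_\ell$ is flat over $\ZZ$, forming commutants commutes with $\otimes\ZZ_\ell$: the commutant is the kernel of the map $\phi\mapsto(\phi\iota_0(b_j)-\iota_0(b_j)\phi)_j$ for a $\ZZ$-basis $b_j$ of $\OO_B$, and kernels are preserved by flat base change. Hence $\OO\otimes\ZZ_\ell$ is identified with the commutant of $\OO_B\otimes\ZZ_\ell\cong\Mat_2(\ZZ_\ell)$ inside $\End_{\ZZ_\ell}(\Ta_\ell(A_0))$. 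This identification is compatible with the Tate-module map, because that map is a ring homomorphism sending $\iota_0(b)$ to the action of $b$ on $\Ta_\ell(A_0)$.

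It remains to compute this commutant. Under the fixed identification $\Ta_\ell(A_0)\cong\Mat_2(\ZZ_\ell)$, $\OO_B\otimes\ZZ_\ell$ acts by left multiplication. A $\ZZ_\ell$-linear map $\phi$ that commutes with all left multiplications is determined by $c=\phi(1)$, since $\phi(m)=\phi(m\cdot1)=m\,\phi(1)=mc$. So $\phi$ is right multiplication by $c$. Conversely, every right multiplication commutes with left multiplications. Composition reverses order, because right multiplication by $c$ followed by right multiplication by $c'$ is right multiplication by $cc'$. This gives the ring isomorphism $\OO\otimes\ZZ_\ell\cong\Mat_2(\ZZ_\ell)^{\op}$ induced by right multiplication. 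Finally, transpose is an anti-automorphism of $\Mat_2(\ZZ_\ell)$, so $c\mapsto c^T$ identifies $\Mat_2(\ZZ_\ell)^{\op}$ with $\Mat_2(\ZZ_\ell)$.

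The main obstacle is justifying the identification $\Ta_\ell(A_0)\cong\Mat_2(\ZZ_\ell)$ as a left $\OO_B\otimes\ZZ_\ell$-module, that is, showing $\Ta_\ell(A_0)$ is free of rank one over $\Mat_2(\ZZ_\ell)$. I would argue this by Morita equivalence. Any $\Mat_2(\ZZ_\ell)$-module has the form $\ZZ_\ell^2\otimes_{\ZZ_\ell} L$ with $L=e_1\Ta_\ell(A_0)$. Since $\Ta_\ell(A_0)$ is free of rank $4$ over $\ZZ_\ell$, $L$ is torsion-free of rank $2$, hence free of rank $2$. This gives $\Ta_\ell(A_0)\cong\Mat_2(\ZZ_\ell)$ as left modules, so the identification fixed in the statement exists. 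The remaining steps are formal.
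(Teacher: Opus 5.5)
Your argument is correct and is essentially the paper's route: the paper states this as an immediate corollary of \autoref{isog thm false}, where Tate's theorem with Frobenius acting as the scalar $[-p]$ gives $\End(A_0)\otimes\ZZ_\ell\cong\Mat_4(\ZZ_\ell)$. Passing to the commutant of the left $\OO_B\otimes\ZZ_\ell$-action on $\Ta_\ell(A_0)\cong\Mat_2(\ZZ_\ell)$ then yields exactly the right multiplications. Your additional justifications fill in steps the paper leaves implicit: flat base change to commute the centraliser with $\otimes\ZZ_\ell$, and the Morita argument that $\Ta_\ell(A_0)$ is free of rank one over $\Mat_2(\ZZ_\ell)$.
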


\begin{cor}\label{isog thm false 2}
  Let $(A,\iota), (A_0,\iota_0)$ be supersingular false elliptic curves with canonical structure over $\fps$. For any $\ell \nmid p\delta$, fix isomorphisms $\Ta_\ell(A) \cong \Mat_2(\ZZ_\ell)$ and $\Ta_\ell(A_0) \cong \Mat_2(\ZZ_\ell)$ such that $\OO_B \otimes \ZZ_\ell \cong \Mat_2(\ZZ_\ell)$ acts on each by left multiplication. Then, as a $\ZZ_\ell$-module, we have an isomorphism
  \begin{equation*}
    \Hom_{\OO_B}(A,A_0) \otimes \ZZ_\ell \cong \Mat_2(\ZZ_\ell)^{\op}
  \end{equation*}
  given by right multiplication $\Ta_\ell(A) \cong \Mat_2(\ZZ_\ell) \to \Ta_\ell(A_0) \cong \Mat_2(\ZZ_\ell)$. In view of the previous Corollary, we instead identify $\Hom_{\OO_B}(A,A_0) \otimes \ZZ_\ell \cong \Mat_2(\ZZ_\ell)$ by composing the above isomorphism with the transpose map.
\end{cor}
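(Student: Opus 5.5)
We deduce the statement from \autoref{isog thm false}, tensored with $\ZZ_\ell$, by isolating the $\OO_B$-equivariant homomorphisms on both sides.

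\textbf{Step 1: restrict the Tate isomorphism to $\OO_B$-linear maps.} By \autoref{isog thm false}, the natural map
\begin{equation*}
  \Hom_{\fps}(A,A_0)\otimes\ZZ_\ell\to\Hom_{\Gal(\fpbar/\fps)}\big(\Ta_\ell(A),\Ta_\ell(A_0)\big)
\end{equation*}
is an isomorphism. It is equivariant for the actions of $\OO_B$ by pre- and post-composition, via $\iota$ and $\iota_0$. Taking $\OO_B$-invariants commutes with the flat base change $-\otimes\ZZ_\ell$, since the invariants are the kernel of finitely many maps $\phi\mapsto \iota_0(b)\phi-\phi\iota(b)$ with $b$ running over a $\ZZ$-basis of $\OO_B$. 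It follows that $\Hom_{\OO_B}(A,A_0)\otimes\ZZ_\ell$ is isomorphic to the module of $\Gal$-equivariant, $\OO_B\otimes\ZZ_\ell$-linear maps $\Ta_\ell(A)\to\Ta_\ell(A_0)$. Here every isogeny of false elliptic curves between curves with canonical $\fps$-structure is defined over $\fps$, by the Remark following \autoref{super false prop}, so $\Hom_{\OO_B}$ may be taken over $\fps$.

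\textbf{Step 2: the Galois condition is automatic.} The $p^2$-Frobenius generates $\Gal(\fpbar/\fps)$ topologically. On both $A$ and $A_0$ it acts as $[-p]$, so on both Tate modules it acts by the scalar $-p$. Every $\ZZ_\ell$-linear map therefore commutes with it. Hence $\Hom_{\OO_B}(A,A_0)\otimes\ZZ_\ell\cong\Hom_{\Mat_2(\ZZ_\ell)}\big(\Ta_\ell(A),\Ta_\ell(A_0)\big)$, where both modules are identified with $\Mat_2(\ZZ_\ell)$ carrying the left multiplication action.

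\textbf{Step 3: compute the endomorphisms of the free module.} A left $\Mat_2(\ZZ_\ell)$-linear map $\varphi\colon\Mat_2(\ZZ_\ell)\to\Mat_2(\ZZ_\ell)$ is determined by $m=\varphi(1)$, through $\varphi(x)=x\varphi(1)=xm$. So $\varphi$ is right multiplication by $m$, and every $m$ arises. Composition reverses order, which gives the opposite ring structure when $A=A_0$; as $\ZZ_\ell$-modules we get $\Mat_2(\ZZ_\ell)^{\op}$. Composing with the transpose yields the stated identification with $\Mat_2(\ZZ_\ell)$, compatibly with the previous corollary.

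\textbf{Main obstacle.} The only delicate point is Step 1. We must check that $\OO_B$-invariants commute with $\otimes\ZZ_\ell$, and that the Tate map respects the $\OO_B$-actions on both sides. Both follow from flatness of $\ZZ_\ell$ over $\ZZ$ and from naturality of $\Ta_\ell$. We also need the chosen trivialisations of the Tate modules to exist, that is, $\Ta_\ell(A)$ must be free of rank one over $\OO_B\otimes\ZZ_\ell$. This holds because $\Ta_\ell(A)$ is a rank-four $\ZZ_\ell$-module with a faithful $\Mat_2(\ZZ_\ell)$-action, and by Morita theory any such module is isomorphic to $(\ZZ_\ell^2)^2\cong\Mat_2(\ZZ_\ell)$.
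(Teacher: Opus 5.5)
Your proposal is correct and follows the argument the paper leaves implicit: the paper gives no separate proof, treating the statement as a direct consequence of \autoref{isog thm false}, where the Galois condition has already been discarded because the $p^2$-Frobenius acts as $[-p]$. One then restricts to $\OO_B\otimes\ZZ_\ell$-linear maps, which on the free rank-one module $\Mat_2(\ZZ_\ell)$ are exactly right multiplications. Your extra checks fill in details the paper takes for granted, namely that $\OO_B$-invariants commute with the flat base change to $\ZZ_\ell$ and that the trivialisations exist by Morita theory.
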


The goal of this section is to produce a Deuring correspondence for supersingular false elliptic curves. We already know from \autoref{super false prop} that the endomorphism ring of a supersingular false elliptic curve is an order in the quaternion algebra $D$. We now show that it is a maximal order.

\begin{prop}
  The endomorphism ring $\OO = \End_{\OO_B}(A_0)$ of a supersingular false elliptic curve $(A_0,\iota_0)$ is a maximal order in the quaternion algebra $D$.
\end{prop}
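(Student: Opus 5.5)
The strategy is to check maximality locally at every prime. An order in $D$ is maximal if and only if $\OO\otimes\ZZ_\ell$ is a maximal order of $D_\ell$ for every prime $\ell$. By \autoref{super false prop}(ii), $\OO\otimes\QQ\cong D$, and $\OO$ is a finitely generated $\ZZ$-module, being a subring of $\End(A_0)$. So it is an order, and only local maximality remains. We may take $A_0$ with its canonical $\fps$-structure, since all endomorphisms are then defined over $\fps$ by the Remark following \autoref{super false prop}. I treat three kinds of primes separately: $\ell\nmid p\delta$, $\ell\mid\delta$, and $\ell=p$.

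\emph{Primes $\ell\nmid p\delta$.} The Corollary preceding \autoref{isog thm false 2} gives $\OO\otimes\ZZ_\ell\cong\Mat_2(\ZZ_\ell)^{\op}$. The point to justify is that taking $\OO_B$-commutants commutes with $\otimes\ZZ_\ell$. By \autoref{isog thm false}, $\End(A_0)\otimes\ZZ_\ell=\End_{\mathrm{Gal}}(\Ta_\ell A_0)$, and the $\OO_B$-commuting part is cut out by linear conditions, which are preserved under the flat extension $\ZZ\to\ZZ_\ell$. Since $\Ta_\ell A_0\cong\Mat_2(\ZZ_\ell)$ is free of rank one over $\OO_B\otimes\ZZ_\ell$, its $\OO_B$-linear endomorphisms are right multiplications. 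Hence $\OO\otimes\ZZ_\ell\cong\Mat_2(\ZZ_\ell)$, which is maximal.

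\emph{Primes $\ell\mid\delta$.} The same Tate argument gives $\OO\otimes\ZZ_\ell\cong\End_{\OO_{B,\ell}}(\Ta_\ell A_0)$, where $\OO_{B,\ell}$ is the maximal order of the division algebra $B_\ell$. Here $\Ta_\ell A_0$ is a free $\ZZ_\ell$-module of rank $4$ with a faithful $\OO_{B,\ell}$-action. Every such module is free of rank one over $\OO_{B,\ell}$: a torsion-free module over the local hereditary order $\OO_{B,\ell}$ is free, and the rank count forces rank one. Its endomorphism ring is therefore $\OO_{B,\ell}^{\op}$, which is the unique maximal order of the division algebra $B_\ell^{\op}\cong D_\ell$. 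I also need to check the Galois-equivariance condition. The $p^2$-Frobenius acts as $[-p]$, which is central, so this condition is automatic, and the identification is the full commutant.

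\emph{The prime $\ell=p$.} This is where I expect the main difficulty, because Tate modules are unavailable and one must use Dieudonné theory or superspeciality instead. I would use the fact, proved above, that $A_0$ is superspecial: $A_0\cong E^2$ with $E$ supersingular over $\fps$ and Frobenius equal to $[-p]$. Then $\End(A_0)\otimes\ZZ_p\cong\Mat_2(\End(E)\otimes\ZZ_p)=\Mat_2(\OO_{D',p})$, where $\OO_{D',p}$ is the maximal order of the ramified quaternion algebra over $\QQ_p$. This follows from Deuring's theorem that $\End(E)$ is a maximal order. Since $p\nmid\delta$, we have $\OO_B\otimes\ZZ_p\cong\Mat_2(\ZZ_p)$, and this embeds into $\Mat_2(\OO_{D',p})$. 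By Skolem--Noether I conjugate the embedding to the standard one; this step needs care over $\ZZ_p$ rather than $\QQ_p$, and I would appeal to the uniqueness up to conjugacy of embeddings of $\Mat_2(\ZZ_p)$ into the maximal order. Alternatively, I would use the idempotents $e_1,e_2$ to split $A_0[p^\infty]\cong G\oplus G$, where $G$ is the $p$-divisible group of $E$, with $\Mat_2(\ZZ_p)$ acting on the factors. The commutant of $\Mat_2(\ZZ_p)$ in $\Mat_2(\OO_{D',p})$ is then $\OO_{D',p}$, the maximal order of $D_p$.

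Combining the three cases shows that $\OO\otimes\ZZ_\ell$ is maximal for every prime $\ell$, so $\OO$ is a maximal order in $D$.
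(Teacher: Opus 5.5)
Your proof is correct, but at the prime $p$ it takes a genuinely different route from the paper's.

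\textbf{At $\ell=p$.} The paper stays inside the false-elliptic setting. It defines $v_p$ as half the $p$-adic valuation of the false degree. It then uses \cite[Lemma 9]{DT1} (an endomorphism with $v_p\geq r/2$ factors through the $r$-th power of Frobenius) together with $\mathrm{Frob}^2=[-p]$ to show that an element of $\OO$ not divisible by $p$ has $v_p\leq\frac{1}{2}$. Hence $\OO\otimes\ZZ_p$ is the full valuation ring of $D_p$.

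You instead use superspeciality, which is proved just before this Proposition, so there is no circularity. This reduces you to Deuring's theorem for $\End(E)$, and you compute $\OO\otimes\ZZ_p$ as a commutant in $\Mat_2(R)$, where $R=\End(E)\otimes\ZZ_p$.

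The conjugacy step you flagged is routine; it is your idempotent alternative carried out on $R^2$ instead of on $A_0[p^\infty]$.
\begin{enumerate}
\item The images of $e_{11},e_{22}$ split the right $R$-module $R^2$ into two summands, exchanged by the images of $e_{12},e_{21}$.
\item Each summand is projective over the local ring $R$, hence free, necessarily of rank one.
\item The basis $v_1$, $v_2=j(e_{21})v_1$ then conjugates the embedding $j$ to the standard one, whose commutant is the scalars $R$.
\end{enumerate}
Working on $R^2$ avoids having to identify the summands of $A_0[p^\infty]$ with $E[p^\infty]$.

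Your route gives the explicit identification $\OO\otimes\ZZ_p\cong\End(E)\otimes\ZZ_p$, at the cost of importing Deuring's theorem. The paper's route is self-contained for false elliptic curves, at the cost of the factorisation lemma from \cite{DT1}.

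\textbf{At $\ell\neq p$.} The paper argues uniformly through $C\otimes C^{\op}\cong\End_F(C)$ and a transpose on $\Mat_4(\QQ_\ell)$. You directly identify the commutant of $\OO_B\otimes\ZZ_\ell$ on the rank-one free module $\Ta_\ell A_0$ with $(\OO_B\otimes\ZZ_\ell)^{\op}$. Your version makes the integral structure explicit, which the paper's transpose argument leaves implicit.
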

\begin{proof}
  We may assume that $(A_0,\iota_0)$ is given by its canonical $\fps$-structure. It is equivalent to prove that $\End_{\OO_B}(A_0) \otimes \ZZ_\ell$ is a maximal order in $D \otimes \QQ_\ell$ for all finite primes $\ell$.

  We first consider the case where $\ell \neq p$, in which case \autoref{isog thm false} tells us that $\End(A_0) \otimes \ZZ_\ell \cong \Mat_4(\ZZ_\ell)$, and we fix this isomorphism.

  It is well known (see for example \cite[Lemma 7.5.4]{Voi1}) that if $C$ is a central simple $F$-algebra of dimension $d$, then $C \otimes C^{\op} \cong \End_{F}(C) \cong \Mat_d(F)$ via the isomorphism sending $\sum \alpha_i \otimes \beta_i \mapsto (\mu \mapsto \sum\alpha_i\mu\beta_i)$. Under this isomorphism, the transpose on $\End_F(C)$ sending $(\mu \mapsto \sum\alpha_i\mu\beta_i) \mapsto (\mu \mapsto \sum\beta_i\mu\alpha_i)$ induces an isomorphism of algebras $C \otimes 1 \leftrightarrow 1 \otimes C^{\op}$.

  When $\ell \neq p$, we can apply this to the quaternion algebra $B \otimes \QQ_\ell$ over $\QQ_\ell$, where $D \otimes \QQ_\ell \cong B \otimes \QQ_\ell$ by definition of the ramification in $D$, and $B \otimes \QQ_\ell \cong (B \otimes \QQ_\ell)^{\op}$ under the involution map (quaternionic conjugate). So we have an isomorphism $(B \otimes \QQ_\ell) \otimes (D \otimes \QQ_\ell) \cong M_4(\QQ_\ell)$ such that the transpose on $M_4(\QQ_\ell)$ induces an isomorphism $(B \otimes \QQ_\ell) \otimes 1 \cong 1 \otimes (D \otimes \QQ_\ell)$. Because the transpose on $M_4(\QQ_\ell)$ stabilises $M_4(\ZZ_\ell)$, we have an isomorphism
  \begin{equation*}
    \big((B \otimes \QQ_\ell) \otimes 1\big) \bigcap M_4(\ZZ_\ell)\quad \cong \quad \big(1 \otimes (D \otimes \QQ_\ell)\big) \bigcap M_4(\ZZ_\ell).
  \end{equation*}
  Consequently, if the left hand side is a maximal order in $B\otimes \QQ_\ell$ then the right hand side is a maximal order in $D \otimes \QQ_\ell$. Because $(A_0,\iota_0)$ is a false elliptic curve, we have an inclusion $\OO_B \otimes \ZZ_\ell \subset \End(A_0) \otimes \ZZ_\ell \cong \Mat_4(\ZZ_\ell)$, so that the left hand side is indeed a maximal order. The right hand side can be identified with
  \begin{equation*}
    \big(\End_{\OO_B}(A_0) \otimes \QQ_\ell\big) \bigcap \big( \End(A_0) \otimes \ZZ_\ell\big)  = \OO \otimes \ZZ_\ell.
  \end{equation*}
  Hence $\OO \otimes \ZZ_\ell$ is a maximal order in $D \otimes \QQ_\ell$ for $\ell \neq p$.

  At the prime $\ell = p$, we need to show that $\OO \otimes \ZZ_p$ is the valuation ring of the division algebra $D \otimes \QQ_p$. Here the valuation $v_p$ of an endomorphism of $(A_0,\iota_0)$ is defined to be half the $p$-adic valuation of its false degree (so the valuation of $[p]$ is 1) - one sees that this extends the usual $p$-adic valuation on $\QQ_p$ to $ D \otimes \QQ_p$ and hence is the unique such valuation. It is equivalent to show that $\OO_{(p)}$ consists exactly of the rational endomorphisms of $(A_0,\iota_0)$ whose false degree has nonnegative $p$-adic valuation. Indeed, we can write any rational endomorphism of $(A_0,\iota_0)$ as $a \phi$ for $a \in \QQ$ and $\phi \in \OO$, and moreover assume that $\phi$ is not an integer multiple of an endomorphism of $(A_0,\iota_0)$ (else change $a$ and $\phi$). By \cite[Lemma 9]{DT1}, $\phi$ has positive valuation $v_p(\phi) \geq r/2$ if and only if $\phi$ factors through the $r$-th power of the Frobenius morphism. Since the square of the Frobenius morphism is $[-p]$, our assumption on $\phi$ implies that $0 \leq v_p(\phi) \leq \frac{1}{2}$. Thus $v_p(a\phi) \geq 0$ implies $v_p(a) \geq -\frac{1}{2}$, and so $a \in \ZZ_p$. This completes the proof that $\OO$ is a maximal order at all primes, and so is a maximal order in $D$.
\end{proof}

\begin{remark}
  By \cite[Proposition 16.6.15]{Voi1}, any nonzero fractional ideal of a maximal order of a quaternion algebra is invertible.
\end{remark}

In order to establish a Deuring correspondence for supersingular false elliptic curves, we need the following definitions and results from \cite{Wat1} specialised to the case of false elliptic curves. Although the results in \cite{Wat1} are described in terms of abelian varieties over a finite field, we can apply them to the case of false elliptic curves (abelian surfaces with quaternionic multiplication) using the fact that if the underlying abelian surfaces of false elliptic curves are isogenous, then they are isogenous as false elliptic curves (see \cite[page 179]{Mil1}). Fix a supersingular false elliptic curve $(A_0,\iota_0)$ with canonical structure over $\fps$ and let $\OO= \End_{\OO_B}(A_0)$, a maximal order in the quaternion algebra $D$.

\begin{defn}
  Let $I$ be an invertible left $\OO$-ideal, and let $H(I)$ be the scheme-theoretic intersection
  \begin{equation*}
    H(I):= \bigcap\limits_{\alpha \in I} \ker\alpha,
  \end{equation*}
  which is a finite subgroup scheme of $A_0$ stable under $\OO_B$. We say that $I$ is a kernel ideal if we have the equality $I = \{ \alpha \in \OO \mid \alpha(H(I))=0\}$.
\end{defn}

\begin{thm}\label{wat1}
  Every invertible left $\OO$-ideal $I$ is a kernel ideal, and $\rank_{\OO_B} (H(I)) = \Nm(I)$ where $\rank_{\OO_B} (H(I))$ is the rank of $H(I)$ as an $\OO_B$-module. Alternatively we can write this as $\rank (H(I)) = \Nm(I)^2$.
\end{thm}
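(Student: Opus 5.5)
The plan is to prove the statement prime by prime, localising both the ideal and the group scheme. Since $\OO$ is maximal, the Remark above shows that every invertible left $\OO$-ideal is locally principal. For each prime $\ell$ we can therefore write $I\otimes\ZZ_\ell=\OO_\ell\alpha_\ell$ with $\alpha_\ell\in\OO_\ell$. The finite group scheme $H(I)$ splits as a product of its $\ell$-primary parts $H(I)_\ell$, and $H(I)_\ell$ is determined by $I\otimes\ZZ_\ell$. Indeed, choosing an integer $m$ with $m\OO\subseteq I$, the subgroup $H(I)$ lies in $A_0[m]$, and $I\otimes\ZZ_\ell$ acts on $A_0[\ell^{v_\ell(m)}]$ through $I/m\OO$. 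So both the kernel-ideal property and the rank formula reduce to a local statement at each $\ell$.

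For $\ell\neq p$ we use \autoref{isog thm false 2}, and its preceding Corollary, to identify $\OO\otimes\ZZ_\ell\cong\Mat_2(\ZZ_\ell)$ with the right action on $\Ta_\ell(A_0)\cong\Mat_2(\ZZ_\ell)$. The $\ell$-part of $H(I)$ is then the kernel of right multiplication by $\alpha_\ell$ on $\Ta_\ell\otimes\QQ_\ell/\ZZ_\ell\cong\Mat_2(\QQ_\ell/\ZZ_\ell)$. By elementary divisors we may take $\alpha_\ell=\mathrm{diag}(\ell^a,\ell^b)$. The kernel is then $\Mat_2(\QQ_\ell/\ZZ_\ell)[\,\cdot\,\alpha_\ell]$, which has order $\ell^{2(a+b)}=|\det\alpha_\ell|_\ell^{-2}$. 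This equals $\Nm(I)_\ell^2$, and its $\OO_B$-rank is $\ell^{a+b}$, because $\Mat_2$ acting on the left makes the module two copies of a column space. For the kernel-ideal property we need that the endomorphisms $\beta\in\OO_\ell$ killing this kernel are exactly $\OO_\ell\alpha_\ell$. This is the statement that right multiplication by $\beta$ kills $\ker(\cdot\alpha_\ell)$ iff $\beta\in\Mat_2(\ZZ_\ell)\alpha_\ell$, i.e. iff $\alpha_\ell^{-1}$ composed with $\beta$ is integral. It follows by reading the matrix $\beta\alpha_\ell^{-1}$ through its action on the Tate module, via Tate's theorem.

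At $\ell=p$ the local ring $\OO_p$ is the valuation ring of the division algebra $D_p$, so $I\otimes\ZZ_p=\OO_p\pi^r$ for a uniformiser $\pi$. By the preceding Proposition we may take $\pi$ to be the Frobenius $F$ of the canonical $\fps$-structure; its square is $[-p]$ and it has valuation $1/2$. Then $H(I)_p=\ker F^r$, which has rank $p^{2r}$ since $F$ is purely inseparable of degree $p^2$ on an abelian surface. This gives $\OO_B$-rank $p^r=\Nm(\pi^r)$, as $\Nm_p(\pi)=\pm p$ and the false degree of $F$ is $p$. The kernel-ideal property at $p$ is the statement that $\phi$ kills $\ker F^r$ iff $\phi$ factors through $F^r$. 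That is exactly \cite[Lemma 9]{DT1}, as used in the proof of maximality.

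I expect the main obstacle to be the globalisation: showing that an endomorphism killing $H(I)$ lies in $I$ itself and not merely in each $I\otimes\ZZ_\ell$. The plan is to use $I=\bigcap_\ell(I\otimes\ZZ_\ell\cap D)$ inside $\OO$, together with the local equivalences established above. Along the way we must also check that the $\ell$-primary part of $H(I)$ really is the common kernel of the local generator $\alpha_\ell$. This follows because $I+\ell^n\OO=\OO\alpha+\ell^n\OO$ for a global $\alpha\in I$ approximating $\alpha_\ell$, with $\ell^n$ large enough to kill $H(I)_\ell$. Multiplying the local ranks over all $\ell$ then gives $\rank(H(I))=\Nm(I)^2$ and $\rank_{\OO_B}(H(I))=\Nm(I)$.
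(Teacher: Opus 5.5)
Your argument is correct in outline, but it takes a genuinely different route from the paper. The paper gives no argument of its own here. It invokes \cite[Theorem 3.15]{Wat1} for the maximal order $\OO$ and adds only that the false degree is the square root of the degree.

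You instead prove the statement directly by localising at each prime. Away from $p$ you use Tate's theorem, in the form $\End(A_0)\otimes\ZZ_\ell=\End_{\ZZ_\ell}(\Ta_\ell A_0)$ (the $p^2$-Frobenius being the scalar $-p$), followed by taking $\OO_B$-commutants. This shows that $\beta$ kills $\ker(\alpha)[\ell^\infty]$ iff $\beta\alpha^{-1}$ is integral. At $p$ you use the factorisation-through-Frobenius lemma of \cite{DT1}, which the paper already uses to prove maximality of $\OO$.

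The citation is shorter, but it relies on transporting Waterhouse's results, stated for the full endomorphism ring of an abelian variety, to the centraliser $\End_{\OO_B}(A_0)$, and the paper justifies this only briefly. Your proof works with $\End_{\OO_B}(A_0)$ directly and uses only ingredients already in the paper. It also describes $H(I)$ explicitly: an \'etale part at each $\ell\neq p$ and the infinitesimal part $\ker F^r$. This fits the connected--\'etale splitting in the proof of \autoref{HI inverse}.

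Two local steps need repair.

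\emph{Primes $\ell\mid\delta$.} Here both $\OO_B\otimes\ZZ_\ell$ and $\OO\otimes\ZZ_\ell$ are maximal orders in division algebras. So the identification $\OO\otimes\ZZ_\ell\cong\Mat_2(\ZZ_\ell)$ and the elementary-divisor count are unavailable; the paper's Corollaries are stated only for $\ell\nmid p\delta$. Your $\beta\alpha^{-1}$ argument never really uses matrices, so the kernel-ideal part survives. For the rank, argue uniformly: if $\alpha\in I$ satisfies $\OO_\ell\alpha=I\otimes\ZZ_\ell$, then $H(I)_\ell$ is the $\ell$-primary part of $\ker\alpha$. Its order is the $\ell$-part of $\deg\alpha$. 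To see $\deg\alpha=\Nm(\alpha)^2$, compute $\deg\alpha$ as the determinant of $\alpha$ on $V_{\ell'}(A_0)$ for any $\ell'\neq p$; this is a free rank-one $B\otimes\QQ_{\ell'}$-module on which $\alpha$ acts by right multiplication. This count works at every prime, including $p$.

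\emph{The prime $p$.} You cannot take the uniformiser to be the Frobenius. $F$ is an isogeny $A_0\to A_0^{(p)}$, and this Galois twist need not be isomorphic to $(A_0,\iota_0)$ (compare a supersingular $E$ with $j(E)\notin\fp$). So in general $F\notin\OO$. What you actually need is $H(I)_p=\ker F^r$, and this follows from the same global $\alpha$ with $v_p(\alpha)=r/2$. The lemma of \cite{DT1} gives $\ker F^r\subseteq\ker\alpha$, and both $p$-primary parts have order $p^{2r}$, so they coincide.

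With these fixes, your criterion at $p$ goes through: $\beta$ kills $\ker F^r$ iff $v_p(\beta)\geq r/2$ iff $\beta\in\OO_p\pi^r$. Your globalisation via $I=\bigcap_\ell(I\otimes\ZZ_\ell\cap D)$ also goes through.
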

\begin{proof}
  This is \cite[Theorem 3.15]{Wat1}, where we use the fact that $\OO$ is a maximal order in $D$. The reason for the squaring of $\Nm(I)^2$ is that the false degree of an isogeny of false elliptic curves is the square root of the degree of the underlying isogeny of abelian varieties.
\end{proof}

\begin{thm}\label{wat2}
  Let $I$ and $J$ be kernel ideals. Then $A_0/H(I) \cong A_0/H(J)$ (as false elliptic curves) if and only if $I$ and $J$ are isomorphic as $\OO$-modules.
\end{thm}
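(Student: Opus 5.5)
The plan is to adapt Waterhouse's argument \cite[Theorem 3.11]{Wat1} to the $\OO_B$-equivariant setting, using \autoref{wat1} to control the quotient maps. Write $\pi_I\st A_0\to A_0/H(I)$ for the quotient isogeny; it commutes with $\OO_B$ because $H(I)$ is $\OO_B$-stable. The central object is the lattice
\begin{equation*}
  \Hom_{\OO_B}(A_0/H(I),A_0)\circ\pi_I\subseteq\OO .
\end{equation*}

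\textbf{Step 1: recovering $I$ from the quotient.} We show
\begin{equation*}
  \Hom_{\OO_B}(A_0/H(I),A_0)\circ\pi_I = \{\alpha\in\OO \st \alpha(H(I))=0\}.
\end{equation*}
The left side is contained in the right since every element kills $H(I)=\ker\pi_I$. Conversely, any $\alpha\in\OO$ with $\alpha(H(I))=0$ factors uniquely as $\beta\circ\pi_I$ by the universal property of the quotient. The factor $\beta$ is $\OO_B$-linear because $\pi_I$ is an epimorphism commuting with $\OO_B$. Since $I$ is a kernel ideal (\autoref{wat1}), the right side equals $I$. Hence $\phi\mapsto\phi\circ\pi_I$ gives an isomorphism of left $\OO$-modules
\begin{equation*}
  \Hom_{\OO_B}(A_0/H(I),A_0)\cong I.
\end{equation*}
Here $\OO$ acts by post-composition.

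\textbf{Step 2: the forward direction.} Suppose $\lambda\st A_0/H(I)\to A_0/H(J)$ is an isomorphism of false elliptic curves. Then precomposition by $\lambda$ identifies $\Hom_{\OO_B}(A_0/H(J),A_0)$ with $\Hom_{\OO_B}(A_0/H(I),A_0)$, and this identification is $\OO$-linear. Combined with Step 1, this gives $I\cong J$ as $\OO$-modules.

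\textbf{Step 3: the converse.} Suppose $I\cong J$ as left $\OO$-modules. Any such isomorphism of lattices in $D$ extends to $D$ and is right multiplication by some $x\in D^\times$, so $J=Ix$. Clearing denominators, replace $x$ by $nx$ with $n\in\ZZ_{>0}$. This changes $J$ to $nJ$, and $H(nJ)=[n]^{-1}H(J)$, so $A_0/H(nJ)\cong A_0/H(J)$ via the map induced by $[n]$. We may therefore assume $x\in\OO$, so that $J=Ix\subseteq I$.

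Next we show $H(J)=x^{-1}(H(I))$ as schemes. Every element of $J$ has the form $\alpha x$ with $\alpha\in I$, so $\alpha x$ kills $x^{-1}(H(I))$, giving $x^{-1}(H(I))\subseteq H(J)$. Equality follows from orders: by \autoref{wat1},
\begin{equation*}
  \rank H(J)=\Nm(J)^2=\Nm(I)^2\,\Nm(x)^2=\rank H(I)\cdot\deg x=\rank x^{-1}(H(I)).
\end{equation*}
This uses that $x$ is an isogeny, so $x^{-1}(H(I))$ has rank $\deg(x)\cdot\rank H(I)$. Then $x$ induces an $\OO_B$-equivariant map $A_0/H(J)\to A_0/H(I)$ whose kernel is $x^{-1}(H(I))/H(J)=0$. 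It is therefore an isomorphism of false elliptic curves.

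\textbf{Main obstacle.} The delicate point is the scheme-theoretic equality $H(J)=x^{-1}(H(I))$ when $p\mid \Nm(x)$, where kernels are non-reduced. For this I would rely entirely on the rank formula of \autoref{wat1} together with the containment, rather than on any point-counting argument. One also has to confirm that $\deg(x)=\Nm(x)^2$, that is, that false degree equals reduced norm on $\OO$. This holds because $x\mapsto x^t x$ is the reduced norm on $D$, since the Rosati involution restricts to the canonical involution on $\End_{\OO_B}(A_0)$.
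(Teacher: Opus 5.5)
Your argument is correct, but it is not the paper's route. The paper gives no argument of its own here. It simply cites \cite[Theorem 3.11]{Wat1}, relying on the informal remark that Waterhouse's results, which are stated for the full endomorphism ring of an abelian variety, transfer to false elliptic curves.

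You instead rerun Waterhouse's argument directly in the $\OO_B$-equivariant setting. For the forward direction you recover $I$ as $\Hom_{\OO_B}(A_0/H(I),A_0)\circ\pi_I$. For the converse you realise $J=Ix$ by an isogeny. This is precisely what justifies the transfer the paper glosses over, since here the ideals are ideals of $\End_{\OO_B}(A_0)$ and the isomorphisms must commute with $\OO_B$.

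One simplification: the point you flag as the main obstacle is not actually an obstacle. You have $\ker(\alpha x)=x^{-1}(\ker\alpha)$, and scheme-theoretic preimage commutes with scheme-theoretic intersection. This is the same fact you already use to get $H(nJ)=[n]^{-1}H(J)$. So $H(Ix)=x^{-1}(H(I))$ holds on the nose, even when $p\mid \Nm(x)$, with no rank count.

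Consequently the converse does not need the rank formula of \autoref{wat1}, the identification of false degree with reduced norm, or the maximality of $\OO$. Your argument then proves the statement for arbitrary kernel ideals, exactly as in Waterhouse.
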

\begin{proof}
  \cite[Theorem 3.11]{Wat1}.
\end{proof}

\begin{defn}
  For $H$ a finite subgroup scheme of $A_0$ stable under $\OO_B$, define
  \begin{equation*}
    I(H) := \{\alpha \in \OO \mid \alpha (H)=0\},
  \end{equation*}
  a left $\OO$-ideal, nonzero (and hence invertible) since $\#H \in I(H)$.
\end{defn}
So $I$ being a kernel ideal says that $I(H(I))=I$, justifying our notation.

\begin{lemma}\label{HI inverse}
  If $H$ is any finite subgroup scheme of $A_0$ stable under $\OO_B$, then $H(I(H))=H$.
\end{lemma}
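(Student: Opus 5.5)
We need to show that $H(I(H))=H$ for any finite $\OO_B$-stable subgroup scheme $H\subseteq A_0$. One inclusion is formal: by definition every $\alpha\in I(H)$ kills $H$, so $H\subseteq\bigcap_{\alpha\in I(H)}\ker\alpha=H(I(H))$. The content is therefore the reverse inclusion. We will get it from a rank count, combining \autoref{wat1} with a norm computation for $I(H)$.

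\textbf{Step 1: construct $A_0\to A_0$ with kernel containing $H$.} Form the quotient $\pi\st A_0\to A_1:=A_0/H$. This is a false elliptic curve, and it is supersingular by \autoref{super false prop}(i). Since $A_0$ is superspecial and all supersingular false elliptic curves are isogenous, $\Hom_{\OO_B}(A_1,A_0)$ is a nonzero lattice of rank $4$. The set
\begin{equation*}
  J:=\{\beta\circ\pi\st \beta\in\Hom_{\OO_B}(A_1,A_0)\}
\end{equation*}
is a left $\OO$-ideal: left $\OO$-stability holds because $\OO$ acts by postcomposition. Moreover $J$ is contained in $I(H)$, and conversely every $\alpha\in I(H)$ factors uniquely as $\beta\circ\pi$. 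Hence $I(H)=\Hom_{\OO_B}(A_1,A_0)\circ\pi$.

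\textbf{Step 2: compute $\Nm(I(H))$ locally, prime by prime.} At $\ell\neq p$, we use \autoref{isog thm false 2} together with the Tate module description. Then $I(H)\otimes\ZZ_\ell$ becomes the set of right-multiplication maps $\Mat_2(\ZZ_\ell)\to\Mat_2(\ZZ_\ell)$ that factor through $\Ta_\ell(A_1)\supseteq\Ta_\ell(A_0)$. This is a principal ideal $\OO_\ell\, g_\ell$, where $g_\ell$ realises the lattice inclusion. Its reduced norm is $\det g_\ell$, and this is exactly the $\OO_B$-rank of the $\ell$-part of $H$.

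At $p$, $\OO_p$ is the valuation ring, so every left ideal is a power $\OO_p\pi^r$. Here we use \cite[Lemma 9]{DT1}: the $p$-part of $H$ is a finite $\OO_B$-stable subgroup of a superspecial surface, hence of the form $\ker F^r$. Then $I(H)_p=\OO_p\pi^r$, whose norm $p^r$ matches $\rank_{\OO_B}$ of the $p$-part of $H$.

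Multiplying the local norms gives $\Nm(I(H))=\rank_{\OO_B}(H)$.

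\textbf{Step 3: conclude.} By \autoref{wat1}, $\rank_{\OO_B}H(I(H))=\Nm(I(H))=\rank_{\OO_B}(H)$. We already have $H\subseteq H(I(H))$, and both are finite flat group schemes of the same rank, so they are equal.

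\textbf{Main obstacle.} The hardest part is the $p$-primary component in Step 2. There one must know that $\OO_B$-stable subgroup schemes of $A_0[p^\infty]$ are precisely the kernels of powers of Frobenius. The $\ell$-adic computation is routine linear algebra over $\Mat_2(\ZZ_\ell)$. For the $p$-part, I would first try to reduce, via the Dieudonn\'e module with its $\OO_B\otimes\ZZ_p\cong\Mat_2(\ZZ_p)$-action, to a one-dimensional superspecial problem. In that setting, the only subgroup schemes are the Frobenius kernels, since the Dieudonn\'e module of a supersingular formal group of height 2 is uniserial.
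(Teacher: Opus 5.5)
Your argument is correct, but it follows a different route from the paper's.

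The paper splits $H=H^0\times H^{\text{\'{e}t}}$ and treats the two parts separately. For $H^0$, it uses that a connected $\OO_B$-stable subgroup is a Frobenius kernel, so $I(H^0)$ is generated by a power of Frobenius. For $H^{\text{\'{e}t}}$ it argues by \emph{separation}. If $H\subsetneq H(I(H))$ inside $A_0[N]$, the isomorphism $\OO/N\OO\cong\End_{\OO_B}(A_0[N])$ gives an element of $\OO$ killing $H$ but not $H(I(H))$. This is absurd, because every element of $I(H)$ kills $H(I(H))$.

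You argue by \emph{counting} instead. You compute $\Nm(I(H))$ prime by prime, and the rank formula of \autoref{wat1} then forces the trivial inclusion $H\subseteq H(I(H))$ to be an equality.

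What each route buys:
\begin{itemize}
\item Your route treats all primes uniformly and makes real use of the norm formula in \autoref{wat1}. The paper cites that theorem only for the identity $I(H)=I(H(I(H)))$, which is in fact formal.
\item The paper's route never computes $\Nm(I(H))$. However, it silently needs that $\OO_B$-stable subgroups of $A_0[N]\cong\Mat_2(\ZZ/N\ZZ)$ are determined by their annihilators in $\Mat_2(\ZZ/N\ZZ)$. This is a double-annihilator property of left ideals, true because $\ZZ/N\ZZ$ is self-injective.
\item The $p$-adic input is the same in both: $\OO_B$-stable connected subgroup schemes of $A_0$ are Frobenius kernels. The paper simply asserts this. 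Your Morita sketch justifies it: cut down by an idempotent of $\OO_B\otimes\ZZ_p\cong\Mat_2(\ZZ_p)$ to a one-dimensional formal group of height $2$, whose finite subgroup schemes are its Frobenius kernels.
\end{itemize}

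Two small points need tidying up. First, your local computation at $\ell\neq p$ is phrased via $\Mat_2(\ZZ_\ell)$, so it covers only $\ell\nmid\delta$. At $\ell\mid\delta$, both $\OO_B\otimes\ZZ_\ell$ and $\OO_\ell$ are maximal orders in local division algebras, and $\Ta_\ell(A_0)$ is free of rank one over $\OO_B\otimes\ZZ_\ell$. The same lattice count goes through there, since every one-sided ideal is a power of the uniformiser. Second, to apply \autoref{isog thm false 2} to $A_1=A_0/H$, note that $H$ is stable under $[-p]$ and hence defined over $\fps$. So $A_1$ inherits the canonical $\fps$-structure.
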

\begin{proof}
  By the connected-\'{e}tale exact sequence, $H$ factors as $H^0 \times H^{\text{\'{e}t}}$ where $H^0$ is the connected component of the identity and $H^{\text{\'{e}t}}$ is an \'{e}tale finite group scheme. The subgroup scheme $H^0$ can be interpreted as the kernel of a certain power of the absolute Frobenius morphism. Since the Frobenius commutes with $\OO_B$, the decomposition is compatible with the action of $\OO_B$, and we have $H(I(H)) = H(I(H^0)) \times H(I(H^{\text{\'{e}t}}))$. So we may assume that $H$ is either the kernel of the $r$-th power of the Frobenius for some $r$, or is \'{e}tale. In the first case, the result is obvious as $I(H)$ is the ideal generated by the $r$-th power of the Frobenius. In the case that $H$ is \'{e}tale, if $H$ is properly contained in $H(I(H))$ then the same is true of the base changes to $\fpbar$, which are constant group schemes. We can identify these group schemes with subgroups of the group $A_0[N]$ where $N$ is the rank of $H$. Note that $N$ is coprime to $p$ because $A_0[p]$ is the kernel of the square of the Frobenius by \autoref{super false prop}. By \autoref{isog thm false}, we have the isomorphism $\OO/N\OO \cong \End_{\OO_B}(A_0[N])$. Hence if $H$ is properly contained in $H(I(H))$ as subgroups of $A_0[N]$, then there exists $\alpha \in \OO$ annihilating $H$ but not $H(I(H))$. This contradicts the fact that $I(H)=I(H(I(H)))$, coming from applying \autoref{wat1} to the ideal $I=I(H)$.
\end{proof}

\begin{thm}\label{false deuring}
  Fix a supersingular false elliptic curve $(A_0,\iota_0)$ with respect to $B$ and let $\OO = \End_{\OO_B}(A_0)$, a maximal order in the quaternion algebra $D$. There is a bijection between the set of isomorphism classes of supersingular false elliptic curves and the set of isomorphism classes of invertible left $\OO$-ideals (the left class set $\Cls_L\OO$). The map is given by sending $(A,\iota)$ to the isomorphism class of the ideal $\Hom_{\OO_B}(A,A_0)\theta$, for any isogeny $\theta\st (A_0,\iota_0) \to (A,\iota)$.
\end{thm}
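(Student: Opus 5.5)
The plan is to imitate the classical Deuring correspondence, building it from the Waterhouse results \autoref{wat1} and \autoref{wat2} together with \autoref{HI inverse}. By \autoref{super false prop}(iii) we may replace every supersingular false elliptic curve by its canonical $\fps$-model; then all isogenies between them are defined over $\fps$. Given $(A,\iota)$, part (i) of \autoref{super false prop} provides an isogeny $\theta\st A_0\to A$ of false elliptic curves. The first step is to check that $I_\theta:=\Hom_{\OO_B}(A,A_0)\theta$ is a nonzero left $\OO$-ideal in $\OO$. It is closed under left multiplication by $\OO=\End_{\OO_B}(A_0)$, and it is nonzero because $\theta^t\theta=[\deg]$ lies in it. It is then invertible by the remark following the maximal-order proposition. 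Its class does not depend on $\theta$: if $\theta'$ is another isogeny, write $\theta'=\beta\theta$ with $\beta\in\Hom_{\OO_B}(A,A)\otimes\QQ$. Then $I_{\theta'}=\Hom_{\OO_B}(A,A_0)\beta\theta$. Since $\Hom_{\OO_B}(A,A_0)\beta$ equals $\Hom_{\OO_B}(A,A_0)$ scaled by $\beta$ inside $\Hom(A,A_0)\otimes\QQ$, we get $I_{\theta'}=I_\theta\cdot(\theta^{-1}\beta\theta)$, a right multiple of $I_\theta$ by an element of $D^\times$. The same computation, applied to an isomorphism $A\cong A'$, shows that the map is well defined on isomorphism classes.

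Next I identify the ideal with a kernel ideal. Set $H=\ker\theta$, a finite $\OO_B$-stable subgroup scheme, so that $A\cong A_0/H$. The key claim is $I_\theta=I(H)$. The inclusion $\subseteq$ is clear, since every $\phi\theta$ kills $H$. For $\supseteq$, if $\alpha\in\OO$ kills $H$, then $\alpha$ factors through $A_0/H=A$ as $\alpha=\phi\theta$, with $\phi$ an $\OO_B$-linear homomorphism $A\to A_0$. Factoring through a quotient by a finite flat subgroup scheme is standard, and $\phi$ is $\OO_B$-linear because $\theta$ is an epimorphism. With the claim in hand, $H(I_\theta)=H$ by \autoref{HI inverse}, so $A\cong A_0/H(I_\theta)$.

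Injectivity: if $A$ and $A'$ give isomorphic ideals $I$ and $J$, then by \autoref{wat1} both are kernel ideals. By \autoref{wat2}, $A_0/H(I)\cong A_0/H(J)$, that is, $A\cong A'$. Surjectivity: given any invertible left $\OO$-ideal $I$, first rescale by an integer so that $I\subseteq\OO$. Set $A=A_0/H(I)$ with quotient map $\theta$, which is a supersingular false elliptic curve by part (i). Then $I_\theta=I(H(I))=I$, using that $I$ is a kernel ideal by \autoref{wat1}.

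I expect the main obstacle to be the identity $I_\theta=I(\ker\theta)$, and in particular the factorisation step when $\ker\theta$ is not étale. There I would argue exactly as in the proof of \autoref{HI inverse}. Split $H=H^0\times H^{\text{\'et}}$ and write $\theta$ as a composition of a power of Frobenius with a separable isogeny. Then use that the Frobenius factors compatibly with the $\OO_B$-action, since its square is $[-p]$. The universal property of quotients by finite flat group schemes then gives the factorisation in each case.
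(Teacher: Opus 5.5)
Your proposal is correct and follows essentially the same route as the paper. Both arguments rest on the identity $\Hom_{\OO_B}(A,A_0)\theta = I(\ker\theta)$, use \autoref{HI inverse} to get $A\cong A_0/H(I)$, and use \autoref{wat1} and \autoref{wat2} for bijectivity; the paper simply phrases this through the inverse map $I\mapsto A_0/H(I)$. The small differences are that you prove the identity for arbitrary isogenies via the universal property of quotients by finite flat subgroup schemes, which makes your final connected--\'{e}tale splitting unnecessary, and that you verify independence of $\theta$ explicitly, whereas the paper first reduces to separable $\theta$ and only asserts that independence.
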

\begin{proof}
  We construct the inverse by sending the isomorphism class of an invertible left $\OO$-ideal $I$ to the isomorphism class of the supersingular false elliptic curve $(A_0/H(I),\iota_0)$. By \autoref{wat1} and \autoref{wat2}, this is a well-defined injection. It remains to show that any supersingular false elliptic curve $(A,\iota)$ is isomorphic to $(A_0/H(J),\iota_0)$ for some invertible left $\OO$-ideal $J$. By \autoref{super false prop}, for any supersingular $(A,\iota)$, there exists an isogeny $\theta\st A_0 \to A$, which we may assume is separable after factoring out a power of the Frobenius. Then $(A,\iota)$ is isomorphic to $(A_0/\ker\theta,\iota_0)$. By \autoref{HI inverse}, $\ker\theta = H(I(\ker\theta))$ so we can take $J=I(\ker\theta)$. Since $\theta$ is separable, we in fact have $I(\ker\theta) = \Hom_{\OO_B}(A,A_0)\theta$, whose isomorphism class as an $\OO$-ideal does not depend on $\theta$, so that the bijection we have defined relates $(A,\iota)$ to $J=\Hom_{\OO_B}(A,A_0)\theta$.
\end{proof}

Finally, we must enrich this correspondence with the differential and level structure on the supersingular false elliptic curves used to define modular forms on Shimura curves.

\begin{lemma}\label{insep}
  Suppose $\phi\st A \to A_0$ is an isogeny of supersingular false elliptic curves, with $\OO= \End_{\OO_B}(A_0)$, and let $\omega$ be a basis for $\om_{A_0}$. Then $(\phi^t)^*\omega=0$ if and only if $\phi \in \pi \Hom_{\OO_B}(A,A_0)$.
\end{lemma}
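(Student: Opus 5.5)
We reduce the statement to one fact: an $\OO_B$-isogeny $\psi$ between supersingular false elliptic curves kills $\om$ under pullback exactly when it factors through Frobenius. The element $\pi$ in the statement is a uniformiser of $\OO\otimes\ZZ_p$. Concretely, we may take it to be the relative $p$-Frobenius $F\colon A_0\to A_0^{(p)}\cong A_0$ on the canonical $\fps$-model, which is an $\OO_B$-endomorphism of false degree $p$ and hence of valuation $\frac12$. Then $\pi\Hom_{\OO_B}(A,A_0)$ is the set of $\OO_B$-isogenies $A\to A_0$ of the form $\pi\circ\psi$.

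First we translate the condition on $\phi^t$ into a condition on $\phi$. Write $\lambda$, $\lambda_0$ for the canonical principal polarisations. Then $\phi^t=\lambda^{-1}\phi^\vee\lambda_0$, and pulling back along isomorphisms does not affect vanishing. So $(\phi^t)^*\omega=0$ if and only if $(\phi^\vee)^*$ kills the line $e_2\cdot H^0(A_0^\vee,\Omega^1)$. Since $\phi$ commutes with $\OO_B$ and $e_1,e_2$ are conjugate, this is equivalent to $(\phi^\vee)^*=0$ on all of $H^0(\Omega^1)$.

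Next comes the key step, which uses the classical fact that for an isogeny $\psi$ of abelian varieties in characteristic $p$, $\psi^*=0$ on invariant differentials if and only if $\ker\psi\supseteq\ker F$. The map on invariant differentials is the dual of the map on Lie algebras, and $\mathrm{Lie}(\ker\psi)=\ker(d\psi)$. Hence $d\psi=0$ if and only if $\mathrm{Lie}(\ker\psi)=\mathrm{Lie}(A)$, if and only if $\ker F\subseteq\ker\psi$, if and only if $\psi$ factors through $F$.

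Here superspeciality matters. Since $A\cong E^2$ with $E$ supersingular, $\ker F\cong\alpha_p^2$ is the unique $\OO_B$-stable subgroup scheme of order $p^2$ in $A[F^2]=A[p]$. Therefore $\ker\phi^\vee\supseteq\ker F_{A_0^\vee}$ is equivalent, by Cartier duality of kernels ($\ker\phi^\vee=(\ker\phi)^D$ and $\alpha_p^D=\alpha_p$), to $\ker\phi\supseteq\ker F_A$, that is, $\phi=\psi'\circ F_A$. Finally, using the $\fps$-structures we rewrite $\psi'\circ F_A=F_{A_0}\circ\psi''$, where $\psi''\colon A\to A_0$ is an $\OO_B$-isogeny. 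Both composites have connected kernel part of order at least $p^2$ containing the unique $\alpha_p^2$, which is what allows the factor of Frobenius to be moved from one side to the other. This gives $\phi\in\pi\Hom_{\OO_B}(A,A_0)$.

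The converse is direct. If $\phi=\pi\psi$, then $\phi^t=\psi^t\pi^t$, and $\pi^t$ is Verschiebung up to isomorphism. Its dual $F$ kills differentials, so $(\phi^\vee)^*$ does as well.

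The main obstacle is the duality bookkeeping, namely showing that $\phi^\vee$ factors through Frobenius exactly when $\phi$ does. This requires that the Frobenius kernel be self-dual in the superspecial case, so that Frobenius and Verschiebung have the same kernel $\alpha_p^2$. I would use $a(A)=2$ from the superspeciality proposition to identify $\ker F=\ker V=A[F]$, and then argue directly.
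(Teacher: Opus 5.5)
There is a genuine gap, and it is your first step: you identify $\pi$ with the Frobenius $F\colon A_0\to A_0^{(p)}\cong A_0$, an $\OO_B$-endomorphism of false degree $p$. Such an endomorphism is an element of $\OO$ of reduced norm $p$, and $\OO$ need not contain one. Equivalently, $A_0^{(p)}\cong A_0$ as false elliptic curves can fail, and whether it holds depends on $A_0$. Already for $\delta=1$, a supersingular $E$ has an endomorphism of degree $p$ if and only if $j(E)\in\FF_p$.

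In the paper $\pi$ is only a uniformiser of $\OO_p$. Accordingly, $\pi\Hom_{\OO_B}(A,A_0)$ means the elements of $\Hom_{\OO_B}(A,A_0)$ lying in $\pi\big(\Hom_{\OO_B}(A,A_0)\otimes\ZZ_p\big)$. Two of your steps depend on the global Frobenius: your description of this set as $\{\pi\circ\psi\}$, and your final rewriting $\psi'\circ F_A=F_{A_0}\circ\psi''$ with $\psi''\colon A\to A_0$ (the target of $F_{A_0}$ is $A_0^{(p)}$, not $A_0$). So as written you show that $\phi$ factors through a Frobenius, but never connect this to membership in $\pi\Hom_{\OO_B}(A,A_0)$.

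The repair is the paper's argument, which replaces factorisation through Frobenius by a false-degree condition. State your kernel/Lie algebra reasoning in its strong form: a nontrivial connected $\OO_B$-stable subgroup scheme has a nonzero $\Mat_2(\fpbar)$-stable Lie algebra, hence contains $\ker F$. Uniqueness among subgroups of order $p^2$ is not enough. This gives $(\phi^t)^*\omega=0$ iff $\ker F_{A_0}\subseteq\ker\phi^t$ iff $p\mid\mathrm{falsedeg}(\phi^t)=\mathrm{falsedeg}(\phi)$. Then pick a separable $\theta\colon A_0\to A$ of false degree prime to $p$, so that $\phi\theta\in\OO$. In the valuation ring $\OO_p$ one has $x\in\pi\OO_p$ iff $p\mid\Nm(x)$. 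Finally, $\phi\theta\in\pi\OO$ iff $\phi\in\pi\Hom_{\OO_B}(A,A_0)$, because $\theta\theta^t$ is multiplication by an integer prime to $p$.

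This also disposes of your ``duality bookkeeping'', which Cartier duality alone does not handle. The identity $\ker\phi^\vee=(\ker\phi)^D$ turns a subgroup $\ker F_A\subseteq\ker\phi$ into a \emph{quotient} of $\ker\phi^\vee$, not a subgroup. What actually makes the conditions on $\phi$ and on $\phi^t$ equivalent is that they have the same false degree. The same applies to your converse: what you need is $(\pi^t)^*=0$ on differentials. That holds because $\pi^t$ has false degree $p$, not because its dual $F$ kills differentials.
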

\begin{proof}
  Let $\theta\st A_0 \to A$ be any separable isogeny. Then $0 \neq \phi\theta \in \OO = \End_{\OO_B}(A_0)$. By definition of $\pi$ as a uniformiser of $\OO_p$, we see that $x \in \pi\OO$ if and only if the false degree of $x$ as an endomorphism of $A_0$ is divisible by $p$. Now $(\phi^t)^*\omega = 0$ if and only if the false degree of $\phi$ is divisible by $p$, since $A_0$ is supersingular. This occurs if and only if $\phi\theta$ has false degree divisible by $p$ (since $\theta$ has false degree coprime to $p$), and we have seen that this is equivalent to $\phi\theta \in \pi\OO$. The statement $\phi\theta \in \pi\OO$ implies $\phi \in \pi \Hom_{\OO_B}(A,A_0)$ by precomposing with $\theta^t$, and the converse is immediate.
  % square of Frob is -p so if false degree div by p then factors through frob
\end{proof}

\begin{lemma}\label{crt}[Chinese remainder theorem for ideals of $\OO$]
  Let $I$ be an invertible left $\OO$-module. The natural map $I/\pi N I \to I/\pi I \times I/NI$ is an isomorphism.
\end{lemma}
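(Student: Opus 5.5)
The plan is to reduce the statement to the two-sided ideal $\pi N\OO = \pi\OO \cap N\OO$, using the fact that the two generators are coprime. Since $p\nmid N$, the integers $N$ and $p$ are coprime, so there are $a,b\in\ZZ$ with $aN+bp=1$. Here $p=\pi^2$ up to a unit in $\OO_p$. To be careful globally, I will use $\pi\OO$ to mean the two-sided ideal $\OO\cap\pi\OO_p$, the unique two-sided prime of $\OO$ above $p$. Its square is $p\OO$, and $\OO/\pi\OO\cong\fps$. With this convention, $p\in\pi\OO$, and therefore
\begin{equation*}
  1 = aN + bp \in N\OO + \pi\OO .
\end{equation*}
So the two-sided ideals $\pi\OO$ and $N\OO$ are comaximal.

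Next I would check the corresponding statements for $I$. Since $N\OO$ and $\pi\OO$ are two-sided, $NI=I\cdot N\OO$ and $\pi I$ are sub-$\OO$-modules of $I$. Kernel of the map: $\pi N I$ is contained in both $\pi I$ and $NI$, so the map $I/\pi NI\to I/\pi I\times I/NI$ is well defined. Injectivity amounts to $\pi I\cap NI\subseteq \pi N I$, and surjectivity amounts to $\pi I+NI=I$.

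For surjectivity, write $x = aNx + bpx$ for $x\in I$. The first term lies in $NI$. The second lies in $\pi I$, because $p\in\pi\OO$ is central and $\pi I=\pi\OO\cdot I$. For injectivity, take $y\in \pi I\cap NI$ and write $y = aNy + bpy$. Since $y\in\pi I$, we get $aNy\in N\pi I$. Since $y\in NI$ and $p\in\pi\OO$, we get $bpy\in \pi N I$. I am using here that $N$ is central, so $N\pi I=\pi NI$.

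The only real obstacle is making precise what $\pi I$ means for a global ideal $I$ and an element $\pi$ that lives only in $\OO_p$. I would handle this by checking everything locally. Both sides are finite $\OO$-modules, and $I$ is locally free of rank one over $\OO$ since it is invertible. The quotient $I/\pi NI$ is supported at $p$ and at the primes dividing $N$, and it decomposes as the product of its localisations:
\begin{equation*}
  I/\pi NI \cong I_p/\pi I_p \times \prod_{\ell\mid N} I_\ell/NI_\ell .
\end{equation*}
At $p$, $N$ is a unit; at $\ell\mid N$, $\pi$ is a unit. This gives the same conclusion as the argument above, and it also confirms that $I/\pi I\cong I_p/\pi I_p\cong\fps$, a fact useful later.
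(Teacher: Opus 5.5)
Your argument is correct, and it takes a different route from the paper's.

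The paper asserts injectivity in one line, from the coprimality of $p$ and $N$. It then gets surjectivity by counting. It reduces (tacitly using that $I$ is locally principal) to the identity $\#(\OO/\pi N\OO)=\#(\OO/\pi\OO)\cdot\#(\OO/N\OO)$. This it deduces from $\#(\OO/a\OO)=N_{D/\QQ}(a)$ for $a\in\OO$ and the multiplicativity of the non-reduced norm.

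You avoid cardinalities altogether. You prove both $\pi I\cap NI\subseteq \pi NI$ and $\pi I+NI=I$ directly from the relation $1=aN+bp$ with $p\in\pi\OO$.

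Your route is more elementary and also more careful in three ways:
\begin{itemize}
\item Reading $\pi\OO$ as the two-sided prime $\OO\cap\pi\OO_p$ gives $\pi I$ an honest global meaning. The paper's count treats $\pi$ as if it were an element of $\OO$, although it only lives in $\OO_p$.
\item You actually supply the justification of injectivity that the paper leaves as a remark.
\item You only use that $N$ and $p$ are central and that $\pi\OO$ is two-sided. So your argument proves the statement for any left $\OO$-module, and invertibility of $I$ is needed only for your side remark $I/\pi I\cong\fps$. The paper's counting, by contrast, genuinely needs $I$ to be locally free of rank one, in order to transfer the cardinality computation from $\OO$ to $I$.
\end{itemize}
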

\begin{proof}
  The map is injective because $p$ and $N$ are coprime, and comparing the size of each side, it is equivalent to show that
  \begin{equation*}
    \# (\OO /\pi N \OO) = \# (\OO / \pi \OO) \times  \#(\OO / N\OO).
  \end{equation*}
  But this just follows from the fact that for any $a \in \OO$, $\# (\OO / a \OO)$ is the norm (not reduced norm) $N_{D/\QQ}(a)$ (\cite[Lemma 9.6.3]{Voi1}) and that the norm is multiplicative. %det of mult is norm (not reduced norm)
\end{proof}

\begin{defn}\label{quadruple def}
  Fix level structure $V = V(N)$ or $V_1(N)$. By a supersingular false elliptic quadruple of level $V$, we mean a quadruple $(A,\iota,\alpha,\omega)$ consisting of a supersingular false elliptic curve $(A,\iota)$ with canonical $\fps$-structure, a basis $\omega$ for $\om_{A}$ rational over $\fps$, and level $V$-structure $\alpha$ (defined over $\fpbar$).
\end{defn}

\begin{thm}\label{enrich false deuring}
  Fix a supersingular false elliptic quadruple $(A_0,\iota_0,\alpha_0,\omega_0)$ of level $V$. For any supersingular false elliptic quadruple $(A,\iota,\alpha,\omega)$, there exists an isogeny $\phi\st (A,\iota) \to (A_0,\iota_0)$ such that $(\phi^t)^*\omega = \omega_0$ and $\alpha_\phi = \alpha_0$. Let $\OO = \End_{\OO_B}(A_0)$, a maximal order in $D$, so that $\Hom_{\OO_B}(A,A_0)$ is an invertible left $\OO$-module. We have the following uniqueness results:
  \begin{itemize}
    \item Over all such isogenies $\phi$, the reduction of $\phi \mod \pi \Hom_{\OO_B}(A,A_0)$ is unique.
    \item For $V=V(N)$, the reduction of $\phi \mod N\Hom_{\OO_B}(A,A_0)$ is unique.
    \item For $V=V_1(N)$, for some isomorphism $(\OO/N\OO)^\times \cong \GL_2(\ZZ/N \ZZ)$, the reduction of $\phi \mod N\Hom_{\OO_B}(A,A_0)$ is unique up to left multiplication by the subgroup of $\GL_2(\ZZ/N\ZZ)$ consisting of matrices of the form $\begin{psmallmatrix} 1&* \\0&*\end{psmallmatrix}$.
  \end{itemize}
  %Any such $\phi$ is a basis for $\mathrm{Hom}_{\OO_B}(A,A_0)/\pi N \mathrm{Hom}_{\OO_B}(A,A_0)$ as an $\OO/\pi N \OO$-module.
\end{thm}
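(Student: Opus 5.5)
Set $I := \Hom_{\OO_B}(A,A_0)$. By \autoref{false deuring} this is an invertible left $\OO$-module, hence locally free of rank one. The plan is to build $\phi$ one local condition at a time. The condition $(\phi^t)^*\omega=\omega_0$ will be read modulo $\pi I$, and the condition $\alpha_\phi=\alpha_0$ modulo $NI$. \autoref{crt} then glues the two into a single residue class modulo $\pi N I$, and any lift of that class to $I$ will be the required isogeny. Uniqueness comes from identifying, in each quotient, exactly which elements preserve the data.

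\textbf{Step 1: the condition at $p$.} Consider the map $I \to \Hom(\om_{A_0},\om_A)$ sending $\phi \mapsto (\phi^t)^*$. This lands in a one-dimensional $\fpbar$-space, and all the isogenies involved are defined over $\fps$ by the Remark after \autoref{super false prop}. Since $\omega$ and $\omega_0$ are $\fps$-rational, the map actually lands in the $\fps$-line $\Hom_{\fps}(\om_{A_0},\om_A)$. It is additive, and by \autoref{insep} its kernel is exactly $\pi I$. Now $I/\pi I$ is a one-dimensional $\OO/\pi\OO\cong\fps$-space, so it has $p^2$ elements. An injective map from $I/\pi I$ into an $\fps$-line is therefore bijective, after checking that it is semilinear (it is $\fps$-linear up to Frobenius twist). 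Hence there is a unique class $\phi \bmod \pi I$ with $(\phi^t)^*\omega=\omega_0$. This step is where I expect the real care to be needed: one must pin down why $\om$ of a canonical $\fps$-structure curve carries an $\fps$-structure compatible with the $\OO/\pi\OO$-action, and that the map is nonzero. Nonvanishing is easy: take $\phi$ to be a separable isogeny, and note that $\pi I\neq I$.

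\textbf{Step 2: the level condition.} Fix the identifications $\Ta_\ell(A),\Ta_\ell(A_0)\cong\Mat_2(\ZZ_\ell)$ as in \autoref{isog thm false 2}, so that $I\otimes\ZZ_\ell\cong\Mat_2(\ZZ_\ell)$ for each $\ell\mid N$. Then $I/NI\cong\Hom_{\OO_B}(A[N],A_0[N])\cong\Mat_2(\ZZ/N\ZZ)$. For $V(N)$, the level structures $\alpha,\alpha_0$ are $\OO_B$-linear isomorphisms from $\OO_B\otimes\ZZ/N\ZZ$, so $\alpha_0\circ\alpha^{-1}$ is an $\OO_B$-linear isomorphism $A[N]\to A_0[N]$. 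It therefore corresponds to a unique class in $I/NI$, which is automatically invertible, and $\alpha_\phi=\alpha_0$ says exactly that $\phi\equiv\alpha_0\alpha^{-1} \bmod NI$. For $V_1(N)$, $\alpha_0$ and $\alpha$ only prescribe the image of the first column generator $(\ZZ/N\ZZ)^2\hookrightarrow A[N]$. The $\OO_B$-linear isomorphisms carrying $\alpha$ to $\alpha_0$ form a coset of the stabiliser, which in matrix terms is the group of $\begin{psmallmatrix}1&*\\0&*\end{psmallmatrix}$ acting on the appropriate side. The isomorphism $(\OO/N\OO)^\times\cong\GL_2(\ZZ/N\ZZ)$ is the transpose convention from the Corollaries above, chosen so that the stabiliser acts by left multiplication.

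\textbf{Step 3: gluing and existence.} By \autoref{crt}, choose $\phi\in I$ whose reduction modulo $\pi N I$ matches the classes from Steps 1 and 2. Such a $\phi$ is nonzero, because its reduction modulo $\pi I$ is nonzero. It is an isogeny of false elliptic curves, and its false degree is coprime to $N$ because its reduction modulo $N$ is invertible, so $\alpha_\phi$ is a genuine level structure. The uniqueness statements are then exactly those recorded in Steps 1 and 2.
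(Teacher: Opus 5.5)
Your proposal is correct and follows the paper's proof essentially step for step. Like the paper, you treat the differential condition modulo $\pi$ using \autoref{insep} together with a count on the $\fps$-rational differentials, and you treat the level condition modulo $N$ via \autoref{isog thm false 2}, where the transpose yields the $\begin{psmallmatrix}1&*\\0&*\end{psmallmatrix}$ stabiliser; you then glue the two conditions with \autoref{crt}. The only difference is cosmetic: you phrase Step 1 additively, as an injective map from $I/\pi I$ (which has $p^2$ elements) into an $\fps$-line, whereas the paper phrases it multiplicatively (faithful, hence transitive, action of $\fps^\times$), and in your formulation the semilinearity check you flag is unnecessary.
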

\begin{proof}
  By \autoref{crt}, if we can construct isogenies $\phi_\pi, \phi_N\st (A,\iota) \to (A_0,\iota_0)$ such that
  \begin{equation*}
    (\phi_\pi^t)^*\omega = \omega_0\quad\text{and}\quad \alpha_{\phi_N} = \alpha_0,
  \end{equation*}
  then there exists an isogeny $\phi\st (A,\iota) \to (A_0,\iota_0)$ congruent to $\phi_\pi \mod \pi \Hom_{\OO_B}(A,A_0)$, and congruent to $\phi_N \mod N\Hom_{\OO_B}(A,A_0)$, which then satisfies the desired properties.

  We begin with the construction of $\phi_\pi$. Start with any separable isogeny $\psi\st A \to A_0$. Then $(\psi^t)^* \omega$ is a basis for $\om_{A_0}$, rational over $\fps$ (since all isogenies between supersingular false elliptic curves with canonical structure are defined over $\fps$). By composing with an endomorphism of $A_0$, it suffices to prove that $\OO$ acts transitively by pullback on the set of nonzero elements of $\om_{A_0}$ rational over $\fps$ (which can be identified with $\fps^\times$ by picking a basis). By \autoref{insep}, $(\OO /\pi \OO)^\times \cong (\OO_p/\pi \OO_p)^\times \cong \fps^\times$ acts faithfully on this set, and so must act transitively. Moreover, $\phi_\pi$ mod $\pi \Hom_{\OO_B}(A,A_0)$ is unique and nonzero by \autoref{insep}.

  Let $H$ be $\OO_B \otimes \ZZ/N\ZZ \cong \Mat_2(\ZZ/N\ZZ)$ or $\ZZ/N\ZZ \times \ZZ/N\ZZ$ as appropriate. We want the existence of an isogeny $\phi_N\st (A,\iota) \to (A_0,\iota_0)$ such that the following diagram commutes:
  % $$\xymatrix{
  %   & H \ar[ld]_\alpha \ar[rd]^{\alpha_0} & \\
  %   A[N] \ar[rr]^{\phi_N}_\sim & & A_0[N].
  %   }$$
  \begin{center}
    \begin{tikzcd}
      & H \arrow[ld, "\alpha"'] \arrow[rd, "\alpha_0"] & \\
      A[N] \arrow[rr, "\phi_N", "\sim"'] & & A_0[N].
    \end{tikzcd}
  \end{center}

  Since $\alpha,\alpha_0$ are inclusions of $\OO_B$-modules, and $A[N],A_0[N]$ are isomorphic to $\Mat_2(\ZZ/N\ZZ)$ over $\fpbar$ as $\OO_B$-modules, there always exists an isomorphism of $\OO_B$-modules $A[N] \cong A_0[N]$ making the diagram commute. This can then be viewed as an element of $\GL_2(\ZZ/N\ZZ)$ acting on $\Mat_2(\ZZ/N\ZZ)$ by right multiplication, and by \autoref{isog thm false 2} this must come from $\Hom_{\OO_B}(A,A_0) \otimes \ZZ/N\ZZ$. Hence such an isogeny $\phi_N$ exists.

  In the case that $H=\Mat_2(\ZZ/N\ZZ)$, it is clear that $\phi_N$ is unique mod $N$ as there is a unique element of $\GL_2(\ZZ/N\ZZ)$ making the diagram commute.

  Now consider $H=\ZZ/N\ZZ \times \ZZ/N\ZZ$. Under an automorphism of $A_0[N]$, we may assume that $\alpha_0$ maps $H$ to the subgroup $\begin{psmallmatrix}*&0\\ *&0\end{psmallmatrix}$ of $A_0[N] \cong \Mat_2(\ZZ/N\ZZ)$. This subgroup is generated as an $\OO_B$-module by $\begin{psmallmatrix}1&0\\0&0\end{psmallmatrix}$. Then $\phi_N \mod N\Hom_{\OO_B}(A,A_0)$ is unique up to postcomposition by an element of $\OO \otimes \ZZ/N\ZZ$ whose induced action on $A_0[N]$ fixes $\begin{psmallmatrix}1&0\\0&0\end{psmallmatrix}$. These are given by right multiplication by matrices congruent to $\begin{psmallmatrix}1&0\\ *&*\end{psmallmatrix}$ mod $N$, and in $\OO \otimes \ZZ /N\ZZ \cong \Mat_2(\ZZ/N\ZZ)$ these are given by matrices congruent to $\begin{psmallmatrix}1&*\\0&*\end{psmallmatrix}$ mod $N$ (recall we must take transposes).
\end{proof}

\begin{thm}\label{main thm 2}
  For $V = V(N),V_1(N)$, let $\Omega=\Omega^D(N),\Omega^D_1(N)$, respectively.
  There exists a natural bijection between the set of isomorphism classes of supersingular false elliptic quadruples of level $V$, and $\Omega$.
\end{thm}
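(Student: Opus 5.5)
We fix the base quadruple $(A_0,\iota_0,\alpha_0,\omega_0)$ of \autoref{enrich false deuring} and take $\oh=\End_{\OO_B}(A_0)$ as the maximal order defining $U^D(N)$, $U^D_1(N)$. The identifications $\oh\otimes\ZZ_\ell\cong\Mat_2(\ZZ_\ell)$ for $\ell\nmid p\delta$ come from \autoref{isog thm false 2} (with the transpose convention), and $\oh_p$ is the valuation ring. The plan is to pass through the adelic description of the class set. Invertible left $\oh$-modules $I\subset D$ correspond to their local completions $I_\ell=\oh_\ell x_\ell$, giving the standard bijection $\Cls_L\oh\cong \widehat{\oh}^\times\backslash D^\times(\af)/D^\times(\QQ)$, $I\mapsto (x_\ell)_\ell$. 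Right multiplication by $D^\times(\QQ)$ accounts for isomorphism of left ideals, and left multiplication by $\widehat\oh^\times$ accounts for the choice of local generators. Combined with \autoref{false deuring}, applied via $I=\Hom_{\OO_B}(A,A_0)$, this already gives a bijection between isomorphism classes of supersingular false elliptic curves and $\widehat\oh^\times\backslash D^\times(\af)/D^\times(\QQ)$.

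Next we refine this by the extra data. Given a quadruple $(A,\iota,\alpha,\omega)$, we choose $\phi$ as in \autoref{enrich false deuring}. For each $\ell$ we choose a local generator $x_\ell$ of $I_\ell=\Hom_{\OO_B}(A,A_0)\otimes\ZZ_\ell$, normalised as follows. At $\ell\mid N$, we require $x_\ell\equiv\phi$ modulo $N I_\ell$; this is possible because $\phi$ generates $I/NI$ as an $\oh/N\oh$-module, since $\alpha_\phi=\alpha_0$ forces $\phi$ to induce an isomorphism on $N$-torsion. At $p$, we require $x_p\equiv\phi$ modulo $\pi I_p$, using that $\phi\notin\pi I$ by \autoref{insep}. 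At all other $\ell$ we take $x_\ell$ arbitrary. By \autoref{crt} and the uniqueness statements of \autoref{enrich false deuring}, $x=(x_\ell)$ is well defined up to left multiplication by exactly:
\begin{itemize}
\item[] $\oh_p^\times(\pi)$ at $p$, since $\phi$ is unique mod $\pi I$;
\item[] $\oh_\ell^\times(N)$ at $\ell\mid N$ for $V(N)$, respectively $\oh_{\ell,1}^\times(N)$ for $V_1(N)$, since $\phi$ is unique mod $N I$ up to the stated subgroup;
\item[] $\oh_\ell^\times$ elsewhere.
\end{itemize}
That is, $x$ is well defined up to $U^D(N)$, respectively $U^D_1(N)$. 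An isomorphism of quadruples $A\to A'$ transports $\phi$ and multiplies $I$ on the right by an element of $D^\times(\QQ)$, so the class of $x$ in $\Omega$ is well defined. I expect the main obstacle to be bookkeeping of left versus right and transposes. One must check that the stabiliser subgroup appearing in \autoref{enrich false deuring}, after the transpose identification, is exactly $\oh_{\ell,1}^\times(N)$ acting on the correct side, and similarly that the $\fps^\times=\oh_p^\times/\oh_p^\times(\pi)$ ambiguity in $\omega$ matches left multiplication at $p$. I would verify this by an explicit computation with $\ell$-adic Tate modules.

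For the inverse, given $x\in D^\times(\af)$ we form the lattice $I=D\cap\bigcap_\ell \oh_\ell x_\ell$, which is an invertible left $\oh$-ideal, and set $A=A_0/H(I)$ via \autoref{false deuring}. The reductions of $x_p$ modulo $\pi$ and of $(x_\ell)_{\ell\mid N}$ modulo $N$ define, via \autoref{crt}, a class $\phi\in I/\pi N I$. Lifting it to an isogeny $A\to A_0$ and pulling back $\omega_0$ and $\alpha_0$ along it gives $\omega$ and $\alpha$; this works by the transitivity arguments in the proof of \autoref{enrich false deuring}, run in reverse. Here $\omega$ is determined by $(\phi^t)^*\omega=\omega_0$, and $\alpha$ by $\phi\circ\alpha=\alpha_0$. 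The two constructions are mutually inverse by the uniqueness clauses, which gives the bijection. Naturality, and later Hecke compatibility, follow because everything is defined by composition with isogenies.
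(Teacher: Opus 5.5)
Your outline follows the paper's argument. You normalise local generators of the ideal attached to $A$ against the isogeny $\phi$ of \autoref{enrich false deuring} modulo $\pi N$ via \autoref{crt}, read off the residual ambiguity as $U^D(N)$ or $U^D_1(N)$ on the left and $D^\times(\QQ)$ on the right, and invert through $A_0/H(I)$. However, two steps fail as written.

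First, $\Hom_{\OO_B}(A,A_0)$ is not a lattice in $D$. Its local generators are therefore not elements of $D_\ell^\times$, and your $x$ is not a point of $D^\times(\af)$. As the paper does, you must fix an isogeny $\theta\st A_0\to A$, work with $I=\Hom_{\OO_B}(A,A_0)\theta\subseteq\oh$, and normalise $x_\ell$ against $\phi\theta$. The right $D^\times(\QQ)$-ambiguity is exactly the choice of $\theta$: replacing $\theta$ by $\theta'$ multiplies both $I$ and $\phi\theta$ on the right by $\theta^{-1}\theta'\in D^\times$. By contrast, an isomorphism $f$ of quadruples changes nothing once you replace $\theta,\phi$ by $f\theta,\phi f^{-1}$.

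Second, in the inverse, $D\cap\bigcap_\ell\oh_\ell x_\ell$ is in general only a fractional left ideal, while $H(I)$ is defined only for $I\subseteq\oh$. You must first replace $x$ by $x\gamma$ with $\gamma\in D^\times(\QQ)$ to make $I$ integral. The paper also takes $\Nm(I)$ prime to $pN$. Then $\theta\st A_0\to A_0/H(I)$ is separable with $\Hom_{\OO_B}(A,A_0)\theta=I$, and a lift $\psi\in I$ of your class in $I/\pi NI$ can be written as $\psi=\phi\theta$.

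What you then need is not transitivity ``run in reverse''. You need the false degree of $\phi$ to be prime to $pN$, so that $\omega=(\mathrm{falsedeg}\,\phi)^{-1}\phi^*\omega_0$ and $\alpha=\phi^{-1}\circ\alpha_0$ (inverse taken on $N$-torsion) are defined and satisfy $(\phi^t)^*\omega=\omega_0$ and $\alpha_\phi=\alpha_0$. This holds because $\psi$ generates $I_\ell$ for every $\ell\mid pN$ and $\theta$ has false degree $\Nm(I)$ by \autoref{wat1}.

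Finally, your reason why $\phi$ generates $I/NI$ is wrong for $V_1(N)$. There $\alpha$ is only an embedding of $(\ZZ/N\ZZ)^2$, and right multiplication by $\begin{psmallmatrix}1&0\\0&0\end{psmallmatrix}$ fixes the subgroup $\begin{psmallmatrix}*&0\\ *&0\end{psmallmatrix}$ pointwise. By \autoref{isog thm false 2} and \autoref{crt} such a non-invertible reduction is realised by an isogeny satisfying both normalisations, so $\alpha_\phi=\alpha_0$ does not force $\phi$ to be an isomorphism on $N$-torsion. Take $\phi$ as constructed in the proof of \autoref{enrich false deuring}, which is an isomorphism $A[N]\to A_0[N]$; the uniqueness clause there is implicitly among such $\phi$. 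With these repairs your proof is the paper's.
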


\begin{proof}
  Fix a supersingular false elliptic quadruple $(A_0,\iota_0,\alpha_0,\omega_0)$ of level $V$ and let $\OO=\End_{\OO_B}(A_0)$. For any supersingular false elliptic quadruple $(A,\iota,\alpha,\omega)$, fix some separable isogeny $\theta\st A_0 \to A$. Let $\phi\st A \to A_0$ be an isogeny as in the statement of \autoref{enrich false deuring}. Then $I=\Hom_{\OO_B}(A,A_0)\theta$ is an invertible left $\OO$-ideal containing the element $\phi\theta \in I$, whose reduction modulo $\pi N$ is a basis for $I/\pi N I$ as an $\OO/\pi N \OO$-module. Since $I$ is invertible, it is locally principal (\cite[Theorem 16.6.1]{Voi1}), and we can pick local generators $x_\ell \in D^\times_\ell$ so that $I \otimes \ZZ_\ell = \OO_\ell x_\ell$ for all $\ell$, and also $\phi\theta \equiv x_\ell \mod \pi N (I \otimes \ZZ_\ell)$ for all $\ell$ (note that almost all of these congruences are vacuous). The class $[x_\ell]$ of $(x_\ell)$ in $\Omega$ depends only on the isomorphism class of the supersingular false elliptic quadruple $(A,\iota,\alpha,\omega)$ by the uniqueness properties of $\phi$ in \autoref{enrich false deuring}. So we have a well defined map between the set of supersingular quadruples of level $V$ and the set $\Omega$.

  We construct an inverse to this map. For any $[x_\ell] \in \Omega$, the $(x_\ell) \in D^\times (\af)$ are local generators for an invertible left fractional $\OO$-ideal $J$. We can pick a representative $I$ of the left class set of $J$ that is an $\OO$-ideal of reduced norm coprime to $pN$. Replacing $J$ by $I$ corresponds to multiplication of $(x_\ell)$ on the right by an element of $D^\times(\QQ)$. So assume that $(x_\ell)$ are local generators for $I$. There also exists some $\psi \in I$ congruent to each $x_\ell$ mod $\pi N I_\ell$ by \autoref{crt}. From \autoref{false deuring}, we have the supersingular false elliptic curve $A = A_0/H(I)$ and a separable isogeny $\theta\st A_0 \to A$ of false degree $\Nm(I)$ (\autoref{wat1}) coprime to $pN$, such that $\Hom_{\OO_B}(A,A_0)\theta = I$. Hence we can express $\psi\in I$ as $\phi\theta$ for some isogeny $\phi\st A \to A_0$. The desired inverse map is given by sending $[x_\ell] \in \Omega$ to the isomorphism class of the supersingular false elliptic quadruple $(A,\iota,\alpha,\omega)$, for $\alpha, \omega$ some differential and level structure satisfying $(\phi^t)^*\omega = \omega_0$ and $(\alpha)_\phi = \alpha_0$. Such $\alpha$ and $\omega$ exist: we can take $\omega = \frac{1}{\mathrm{falsedeg}\phi} \phi^* \omega_0 \neq 0$ and $\alpha = (\alpha_0)_{\phi^{-1}}$, where $\phi^{-1}$ denotes an inverse to $\phi$ mod $N$ ($\mathrm{falsedeg}(\phi)$ is coprime to $N$ since $\psi$ is a basis for $I/\pi N I$ and the reduced norm of $I$ is coprime to $pN$; consequently $\phi$ induces an isomorphism $A[N] \cong A_0[N]$). This proves the theorem.
\end{proof}

Fix a supersingular quadruple $(A_0,\iota_0,\alpha_0,\omega_0)$ and let a supersingular quadruple $(A,\iota,\alpha,\omega)$ correspond to $[x_\ell] \in \Omega$. Recall that weight $k$ modular forms with respect to $B^\times$ are supposed to satisfy an automorphy condition $f(A,\iota,\alpha,\mu\omega)=\mu^{-k}f(A,\iota,\alpha,\omega)$ for all $\mu \in \fps^\times$. We want to determine the image of $(A,\iota,\alpha,\mu\omega)$ in $\Omega$ in terms of $[x_\ell]$ and $\mu$. For an isogeny $\phi\st A \to A_0$ as in \autoref{enrich false deuring} for the quadruple $(A,\iota,\alpha,\omega)$, one only needs to modify $\phi$ mod $\pi \Hom_{\OO_B}(A,A_0)$ to produce the relevant isogeny $A\to A_0$ for the triple $(A,\iota,\alpha,\mu\omega)$. From the proof of \autoref{main thm 2}, we see that we must only modify $(x_\ell)$ at the $p$-th place to obtain the element of $\Omega$ corresponding to $(A,\iota,\alpha,\mu\omega)$. The $x_\ell$ are still local generators for the same ideal $\Hom_{\OO_B}(A,A_0)\theta$, and so we find that $(A,\iota,\alpha,\mu\omega)$ corresponds to $[y_\ell] \in \Omega$ where $y_\ell = x_\ell$ for $\ell \neq p$, and $y_p = \mu' \cdot x_p$ for some $\mu' \in \OO_p^\times$, where $[y_\ell]$ only depends on the class of $\mu'$ in $\OO_p^\times/\OO_p^\times(\pi) \cong \fps^\times$.

\begin{notn}
  Let $Q_k$ denote the space of rules $f$ assigning a value $f(A,\iota,\alpha,\omega) \in \fpbar$ to each isomorphism class of quadruples $(A,\iota,\alpha,\omega)$ where $(A,\iota,\alpha)$ is a supersingular false elliptic curve over $\fps$ with level $V$ structure, and $\omega$ is a basis for $\om_{A/\fps}$, satisfying
  \begin{equation*}
    f(A,\iota,\alpha,\mu\omega) = \mu^{-k}f(A,\iota,\alpha,\omega) \text{ for any $\mu \in \fps^\times$.}
  \end{equation*}
\end{notn}

\begin{cor}\label{weight action}
  The bijection of \autoref{main thm 2} induces a bijection between $Q_k$ and $\MM_k(\Omega)$.
\end{cor}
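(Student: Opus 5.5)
The plan is to transport functions along the bijection of \autoref{main thm 2}. Write $\Phi$ for the map sending the isomorphism class of a supersingular false elliptic quadruple to its class $[x_\ell]\in\Omega$. Given $f\in Q_k$, define $F=f\circ\Phi^{-1}\colon\Omega\to\fpbar$; conversely, given $F\in\MM(\Omega)$, define $f=F\circ\Phi$. Since $\Phi$ is a bijection, these two assignments are mutually inverse linear maps between all rules on quadruples and $\MM(\Omega)$. It therefore suffices to check that the weight-$k$ automorphy condition defining $Q_k$ corresponds exactly to the condition defining $\MM_k(\Omega)$.

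The key input is the discussion immediately preceding the notation $Q_k$. Replacing $\omega$ by $\mu\omega$, with $\mu\in\fps^\times$, changes $\Phi(A,\iota,\alpha,\omega)=[x_\ell]$ only at $p$, namely to $[y_\ell]$ with $y_p=\mu' x_p$ for some $\mu'\in\oh_p^\times$. Moreover the class only depends on $[\mu']\in\fps^\times$. I would sharpen this into an explicit rule $\mu\mapsto[\mu']$ as follows.

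\begin{itemize}
  \item In the proof of \autoref{enrich false deuring}, $\phi$ is determined modulo $\pi$ by the condition $(\phi^t)^*\omega=\omega_0$.
  \item Replacing $\omega$ by $\mu\omega$ requires replacing $\phi$ by $\beta\phi$ with $\beta\in\OO$ chosen so that $(\beta^t)^*$ acts on $\om_{A_0}$ by $\mu^{-1}$.
  \item By \autoref{insep}, $(\OO/\pi\OO)^\times\cong\fps^\times$ acts faithfully on the nonzero $\fps$-rational elements of $\om_{A_0}$, so $\beta\bmod\pi$ is uniquely determined by $\mu$.
  \item Fix the identification $\oh_p^\times/\oh_p^\times(\pi)\cong\fps^\times$ compatibly with this action. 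Then $\mu\mapsto[\mu']$ is a group isomorphism $\fps^\times\to\fps^\times$, and after normalising the identification we may take $[\mu']=\mu$ or $\mu^{-1}$.
\end{itemize}

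I need to determine which of the two it is, and possibly adjust the sign convention in the identification. The step in the list I expect to be delicate is exactly this one: the direction of the character, i.e.\ whether $\mu$ goes to $\mu'$ or to its inverse, and whether the transpose and left/right conventions introduce a Frobenius twist $\mu\mapsto\mu^p$. The plan for this step is to compute $(\phi^t)^*$ for $\phi$ replaced by $\beta\phi$, using $(\beta\phi)^t=\phi^t\beta^t$.

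Granting this, the conditions match directly. Suppose $f\in Q_k$. Then
\[
F(\iota_p(\mu')x)=f(A,\iota,\alpha,\mu\omega)=\mu^{-k}f(A,\iota,\alpha,\omega)=[\mu']^{-k}F(x),
\]
so $F\in\MM_k(\Omega)$. The converse follows by reading the same chain of equalities backwards, since $\mu\mapsto[\mu']$ is surjective onto $\fps^\times$.

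If a Frobenius twist did appear, then by \autoref{MM_dihedral} the target could be rewritten as $\MM_{pk}$. However, the convention $[\mu]$ defined via reduction modulo $\pi$ should avoid this. Finally, I would note that $\oh_p^\times(\pi)$-invariance is automatic for any function on $\Omega$, so no further compatibility checks are needed.
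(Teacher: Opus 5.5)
Your route is the same as the paper's: transport functions along the bijection of \autoref{main thm 2}, using the observation just before the definition of $Q_k$ that replacing $\omega$ by $\mu\omega$ only changes $x_p$ to $\mu' x_p$, with $[\mu']$ depending only on $\mu$. The paper says no more than this. So the real content is the step you postponed, and there your expected outcome is wrong for the convention you invoke.

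Carrying out your planned computation:
\begin{itemize}
\item $((\beta\phi)^t)^*(\mu\omega)=(\beta^t)^*(\phi^t)^*(\mu\omega)=\mu(\beta^t)^*\omega_0$. So you need $(\beta^t)^*\omega_0=\mu^{-1}\omega_0$, and then $\mu'=\beta$.
\item Let $\rho(\gamma)\in\fps$ be the scalar by which $\gamma^*$ acts on the $\fps$-rational line of $\om_{A_0}$. By \autoref{insep} this gives a ring isomorphism $\rho\colon\OO/\pi\OO\to\fps$, i.e.\ one of the two reduction maps.
\item The Rosati involution preserves $D$ (the centraliser of $\OO_B$) and is positive on it. On the definite algebra $D$ it is therefore the canonical involution, so $\beta^t=\bar\beta$.
\item Reducing $a+bi+c\pi+di\pi$ and its conjugate modulo $\pi$, as in \autoref{dihedral}, gives $\bar\beta\equiv\beta^p$. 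Hence
\[
\rho(\mu')^p=\mu^{-1}.
\]
\end{itemize}
With either reduction map you therefore get $[\mu']=\mu^{-1}$ or $[\mu']=\mu^{-p}$. Your chain of equalities then yields $F(\iota_p(\mu')x)=[\mu']^{k}F(x)$, respectively $[\mu']^{pk}F(x)$. So the image of $Q_k$ is $\MM_{-k}(\Omega)$, respectively $\MM_{-pk}(\Omega)$, not $\MM_k(\Omega)$. The Frobenius ambiguity you worried about is harmless by \autoref{MM_dihedral}, but the inversion is not: by \autoref{twist_iso}, $\MM_{-k}(\Omega)\cong\MM_{-pk}(\Omega)\cong\MM_k(\Omega)[\chi^{-k}]$.

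To obtain the statement with $\MM_k(\Omega)$ literally, you must define the identification $\oh_p^\times/\oh_p^\times(\pi)\cong\fps^\times$ by $[\mu']:=\mu$. That is a reduction map followed by $x\mapsto x^{-1}$. It is a legitimate group isomorphism, and it is what the paper's statement tacitly requires. But it is not reduction modulo $\pi$: it restricts to inversion on $\fp^\times$. That makes it incompatible with \autoref{psiD}, where $[\Nm_p(\mu)]$ is the natural reduction and $[\Nm_p(\mu)]=[\mu]^{p+1}$. Under the inverted identification $\chi_D$ has weight $-(p+1)$, and the third isomorphism of Theorem B becomes $\MM_k(\Omega)[\chi]\cong\MM_{k-(p+1)}(\Omega)$.

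So your write-up cannot both assert $[\mu']=\mu$ and take $[\cdot]$ to be reduction modulo $\pi$. You have two options:
\begin{itemize}
\item fix the inverted identification explicitly and track the resulting sign change in the twisting results; or
\item keep the reduction convention of Section 3 and state the correspondence as $Q_k\leftrightarrow\MM_{-k}(\Omega)$.
\end{itemize}
The rest of your argument is fine: linearity and bijectivity of the transport, and the automatic $\oh_p^\times(\pi)$-invariance.
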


% for the supersingular false elliptic curves we worked just with F_p^2*. On D we fixed an isomorphism O_p^*/O_p*(1) \cong F_p^2*. We can choose this isomorphism to make everything work.

We can also interpret $Q_k$ as follows. Let $\cQ_k$ fit into the short exact sequence
% $$\xymatrix{0 \ar[r] & \underline{\omega}_{\XX}^{\otimes k-(p-1)} \ar[r]^{\HH} & \underline{\omega}_{\XX}^{\otimes k} \ar[r] & \cQ_k^B \ar[r] & 0.}$$
\begin{center}
  \begin{tikzcd}
    0 \arrow[r] & \om^{\otimes k-(p-1)} \arrow[r, "\HH"] & \om^{\otimes k} \arrow[r] & \cQ_k \arrow[r] & 0.
  \end{tikzcd}
\end{center}

Since the Hasse invariant $\HH$ vanishes exactly at the supersingular false elliptic curves, we see that $Q_k = H^0(\XX, \cQ_k)$.

Taking the long exact sequence of cohomology we have
% \begin{equation}\label{eqn:les}
%   \xymatrix{0 \ar[r] & M^B_{k-(p-1)} \ar[r]^{\HH} & M^B_k \ar[r]& Q_k^B \ar[r] & H^1(\XX, \underline{\omega}_{\XX}^{\otimes k-(p-1)}) \ar[r] &\dots}
% \end{equation}
\begin{equation}\label{eqn:les}
  \begin{tikzcd}
    0 \arrow[r] & M_{k-(p-1)} \arrow[r, "\HH"] & M_k \arrow[r] & Q_k \arrow[r] & H^1(\XX, \om^{\otimes k-(p-1)}) \arrow[r] & \dots
  \end{tikzcd}
\end{equation}

\begin{lemma}
  For $k \geq 3$, the cohomology group $H^1(\XX,\om^{\otimes k})$ vanishes.
\end{lemma}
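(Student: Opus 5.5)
The plan is to combine Serre duality with the Kodaira--Spencer isomorphism (\autoref{KS2}). Then a degree computation shows that the dual space vanishes.

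The Shimura curve $\XX$ is a smooth proper curve over $\fpbar$, possibly disconnected, so we may work on each connected component separately. Serre duality gives
\begin{equation*}
  H^1(\XX,\om^{\otimes k})^\vee \cong H^0\big(\XX,\Omega^1_{\XX}\otimes\om^{\otimes -k}\big).
\end{equation*}
By \autoref{KS2}, $\Omega^1_{\XX}\cong\om^{\otimes 2}$, so the right-hand side is $H^0(\XX,\om^{\otimes(2-k)})$. For $k\geq 3$ the exponent $2-k$ is negative. A line bundle of negative degree on a smooth proper connected curve has no nonzero global sections. It therefore suffices to show that $\om$ has positive degree on every connected component of $\XX$.

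For the positivity, I would again use \autoref{KS2}: on each component $C$ we have $2\deg(\om|_C)=\deg\Omega^1_C=2g(C)-2$. So we need $g(C)\geq 2$, or at least $g(C)>1$ on every component.

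\textbf{Route 1.} Use that $\om$ has a nonzero section of positive weight, namely the Hasse invariant $\HH\in H^0(\XX,\om^{\otimes(p-1)})$. By \autoref{super false prop}(iv), $\HH$ vanishes exactly at the supersingular points, with simple zeros. These points exist on every component: the supersingular locus corresponds to $\Omega$ via \autoref{main thm 2}, and the Hecke correspondences are compatible with the component structure. Granting this, $\deg(\om^{\otimes(p-1)}|_C)$ equals the number of supersingular points on $C$. This number is positive, and $\HH|_C\neq 0$ because $\HH$ is not identically zero on any component, since ordinary points are dense in each component.

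\textbf{Route 2.} Alternatively, positivity of $\deg\om$ follows from $\om$ being ample, which is the standard fact that the Hodge bundle on a proper Shimura curve is ample. Equivalently, $g(C)\geq 2$ follows from the complex uniformization $\Gamma\backslash\mathfrak H$ with $\Gamma$ torsion-free, since $N\geq 4$. Genus is preserved under good reduction, as $\XX$ is smooth and proper over $\ZZ[1/N\delta]$.

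\textbf{Main obstacle.} The main obstacle is justifying that every connected component meets the supersingular locus, equivalently that $\deg\om>0$ on every component. I would take the cleanest route: smooth proper base change preserves genus, and the complex fibre has components uniformized by torsion-free cocompact Fuchsian groups, hence genus at least $2$. With $\deg\om|_C=g(C)-1>0$ established, Serre duality finishes the proof for all $k\geq 3$.
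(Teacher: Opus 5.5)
Your proposal is correct and follows the paper's proof: Serre duality together with the Kodaira--Spencer isomorphism identifies $H^1(\XX,\om^{\otimes k})$ with $H^0(\XX,\om^{\otimes(2-k)})^\vee$, and this vanishes because $\deg\om = g-1>0$. The only difference is in the genus bound: the paper simply cites \cite[Lemma 6]{DT1} for genus greater than $1$, while you justify it componentwise via torsion-free cocompact uniformization and smooth proper base change. Your Route 1 leaves the existence of supersingular points on every component unproved, so you are right not to rely on it.
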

\begin{proof}
  Applying Serre duality to the Shimura curve $\XX$, together with the Kodaira--Spencer isomorphism, \autoref{KS2}, we can identify this cohomology group with $H^0(\XX, \om^{\otimes 2-k})^\vee$. The degree of the line bundle $\om$ is positive (the genus of the Shimura curve is strictly greater than $1$, see \cite[Lemma 6]{DT1}). Then for $k \geq 3$, $H^0(\XX,\om^{\otimes 2-k})=0$, as a line bundle of negative degree has no nonzero global sections.
\end{proof}

\begin{cor}\label{wkskB}
  For $k > p+1$, we have $\MM_k(\Omega) \cong Q_k \cong M_k/\HH M_{k-(p-1)}$ as $\fpbar$-vector spaces.
\end{cor}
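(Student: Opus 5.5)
The plan is to combine three earlier ingredients: \autoref{weight action}, the identification $Q_k = H^0(\XX,\cQ_k)$, and the long exact sequence \eqref{eqn:les} together with the vanishing lemma for $H^1$.

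First, \autoref{weight action} already gives an isomorphism $\MM_k(\Omega)\cong Q_k$. This map is linear because it is pullback of functions along the bijection of \autoref{main thm 2}. Compatibility with the weight $k$ condition follows from the discussion preceding \autoref{weight action}: replacing $\omega$ by $\mu\omega$ changes only the $p$-component $x_p$, by left multiplication by a representative $\mu'$ of a class in $\fps^\times$. I would check, as part of the argument, that the identification $\OO_p^\times/\OO_p^\times(\pi)\cong\fps^\times$ can be chosen so that $\mu'$ corresponds to $\mu$ (or to $\mu^{-1}$, with the sign conventions matched). Then the automorphy condition defining $Q_k$ corresponds exactly to the one defining $\MM_k(\Omega)$.

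Second, for the isomorphism $Q_k\cong M_k/\HH M_{k-(p-1)}$ I use \eqref{eqn:les}. Exactness at $M_{k-(p-1)}$, $M_k$ and $Q_k$ gives an injection
\[
M_k/\HH M_{k-(p-1)}\hookrightarrow Q_k
\]
with cokernel contained in $H^1(\XX,\om^{\otimes k-(p-1)})$. If $k>p+1$, then $k-(p-1)\geq 3$, so this $H^1$ vanishes by the preceding lemma and the restriction map $M_k\to Q_k$ is surjective. Restriction to the supersingular locus is the map $\om^{\otimes k}\to\cQ_k$ on global sections. Here I use that $\cQ_k$ is a skyscraper sheaf supported on the zeros of $\HH$, which are simple by \autoref{super false prop}(iv). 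Its stalk at a supersingular point is $\om^{\otimes k}$ tensored with the residue field, and these sections are exactly the rules on supersingular quadruples in $Q_k$.

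The main point to handle with care is this last identification, because $Q_k$ was defined using quadruples with canonical $\fps$-structure and $\fps$-rational $\omega$, whereas $H^0(\XX,\cQ_k)$ consists of values at $\fpbar$-points. I would argue as follows. By \autoref{super false prop}(iii), every supersingular point has a canonical $\fps$-model. The $\fpbar$-bases of $\om_A$ are $\fpbar^\times$-multiples of an $\fps$-rational basis. A rule of weight $k$ on $\fps$-rational bases therefore extends uniquely to all bases by the homogeneity condition, and this extension is a section of the fibre of $\om^{\otimes k}$ at that point. Since the zeros of $\HH$ are simple, the fibre of $\cQ_k$ is one-dimensional at each of them, so the two descriptions agree.
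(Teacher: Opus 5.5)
Your proposal is correct and is the argument the paper intends; the paper states the corollary without proof, right after the vanishing lemma. \autoref{weight action} gives $\MM_k(\Omega)\cong Q_k$, and since $k>p+1$ forces $k-(p-1)\geq 3$, the term $H^1(\XX,\om^{\otimes k-(p-1)})$ in \eqref{eqn:les} vanishes, so $M_k/\HH M_{k-(p-1)}\to Q_k$ is an isomorphism.

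Your extra care with the simple zeros of $\HH$ and with $\fps$- versus $\fpbar$-rational bases only fleshes out the paper's one-line identification $Q_k=H^0(\XX,\cQ_k)$. The convention issue you leave open ($\mu$ versus $\mu^{-1}$, or a Frobenius twist) is harmless for this purely dimensional statement, because $\dim\MM_j(\Omega)$ is the number of $\fps^\times$-orbits on $\Omega$ whose stabiliser is killed by the $j$-th power map, and that number is unchanged under $j\mapsto -j$ and $j\mapsto pj$.
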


\section{Hecke eigenvalues}

We adapt Serre's argument to show that the systems of Hecke eigenvalues arising from the spaces $M_k$ over all $k$ are the same as those arising from the spaces $\MM_k(\Omega)$ over all $k$. There are two steps to this. The Hecke operators are defined on $Q_k$ by restricting the Hecke operators on the Shimura curve $\XX$ to the supersingular locus, which is Hecke invariant since it is closed under prime-to-$p$ isogenies.
Following \autoref{weight action}, we must show that the bijection $Q_k \cong \MM_k(\Omega)$ is Hecke equivariant, and we must also show that the Hecke eigenvalues arising from the $M_k$ over all $k$ are the same as those arising from the $Q_k$ over all $k$. We point out that our work, as in \cite{serre-letters}, relates systems of Hecke eigenvalues arising from $M_k$ between \emph{explicit} weights $k$, allowing for a refined understanding of the weights for which a system of Hecke eigenvalues appears.

% Recall that the Hecke operators on $\MM_k(\Omega)$ were defined as follows:
% \begin{equation*}
%   T_\ell f (x) = \frac{1}{\ell} \sum_g f\big(\iota_\ell(g)x\big),\qquad\text{where}\qquad
%   \GL_2(\ZZ_\ell)\begin{pmatrix}1 & 0 \\ 0 & \ell\end{pmatrix}\GL_2(\ZZ_\ell)=\bigsqcup_g \GL_2(\ZZ_\ell)g.
% \end{equation*}

\begin{thm}\label{thm:hecke_equiv}
  The bijection $Q_k \cong \MM_k(\Omega)$ from \autoref{weight action} is Hecke-equivariant.
\end{thm}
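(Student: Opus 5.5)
The plan is to compare the two Hecke operators term by term, working directly with the explicit bijection of \autoref{main thm 2}. Fix the base quadruple $(A_0,\iota_0,\alpha_0,\omega_0)$ and let a supersingular quadruple $(A,\iota,\alpha,\omega)$ correspond to $[x] = [x_\ell]\in\Omega$. Here $x$ is built from the ideal $I=\Hom_{\OO_B}(A,A_0)\theta$ and the isogeny $\phi\st A\to A_0$ of \autoref{enrich false deuring}. For $f\in\MM_k(\Omega)$ with corresponding $\tilde f\in Q_k$, the sum $T_\ell\tilde f(A,\iota,\alpha,\omega)$ runs over the $\ell+1$ isogenies $\eta\st A\to A/G$ with $G\cong\ZZ/\ell\ZZ\times\ZZ/\ell\ZZ$ an $\OO_B$-stable subgroup. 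The sum $T_\ell f(x)$ runs over the $\ell+1$ cosets $\GL_2(\ZZ_\ell)g$ in the double coset of $\mathrm{diag}(1,\ell)$. Both carry the same factor $1/\ell$. So it suffices to construct a bijection $G\leftrightarrow g$ under which the quadruple $(A/G,\iota,\alpha_\eta,(\eta^t)^*\omega)$ corresponds to $[\iota_\ell(g)x]$.

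To build this bijection, I would compute the data attached to $A'=A/G$. Take the isogeny $\phi'=\phi\circ\eta^t\st A'\to A_0$. Since $\eta\circ\eta^t=[\ell]$, we get $((\phi')^t)^*((\eta^t)^*\omega)$ equal to $(\phi^t)^*\omega$ up to the factor coming from $\eta\eta^t=\ell$. Because $\ell\nmid pN$, I expect this to reproduce $\omega_0$ after a harmless rescaling; I need to check this carefully against the normalisation. Similarly, $\alpha_\eta$ composed with $\phi'$ gives $\alpha_0$ up to $\ell$, which must be handled the same way. One could instead use $\ell^{-1}\phi\eta^t$ locally away from $\ell$, which is an honest element of $\Hom_{\OO_B}(A',A_0)\otimes\ZZ_q$ for $q\ne\ell$. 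With this choice the local components at $q\neq\ell$, including $p$ and the primes dividing $N$, are unchanged. Hence the class of $A'$ differs from $[x]$ only at $\ell$.

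At $\ell$, I would use \autoref{isog thm false 2} to identify $\Hom_{\OO_B}(A',A_0)\otimes\ZZ_\ell$ with a lattice in $\Mat_2(\QQ_\ell)$. The ideal $I'=\Hom_{\OO_B}(A',A_0)\theta'$, where $\theta'=\eta\theta$, satisfies $I'_\ell=\Hom(A,A_0)_\ell\,\eta^t\eta\theta\cdot(\cdots)$. Concretely, the map $\eta^t\st \Ta_\ell(A')\to\Ta_\ell(A)$ is right multiplication by a matrix of determinant $\ell$. As $G$ ranges over the $\ell+1$ subgroups, these matrices range over representatives of the $\ell+1$ cosets $\GL_2(\ZZ_\ell)g$ with $\det g=\ell$ (up to the transpose convention and a central $\ell$). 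This uses that $\OO_B$-stable sublattices of index $\ell^2$ in $\Mat_2(\ZZ_\ell)$ are exactly $\Mat_2(\ZZ_\ell)g$ for these $g$. Thus the new local generator is $x'_\ell=g\,x_\ell$, possibly up to a central scalar. This gives $[\iota_\ell(g)x]$.

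The main obstacle is bookkeeping: whether one gets $g$ or $\ell g^{-1}$ (that is, $\eta$ versus $\eta^t$), whether left or right conventions and transposes match, and how the central element $\iota_\ell(\ell)$ acts. I would handle the last point by noting that $\iota_\ell(\ell)$ times a global $\ell^{-1}$ lies in $U\cdot D^\times(\QQ)$ up to components at primes $q\neq\ell$. These act trivially on $\Omega$ only after checking congruences mod $\pi N$, and this is where the $\ell^{-1}$ normalisations of the differential and level structure enter. Since the coset set $\{g\}$ is stable under $g\mapsto\ell g^{-1}$ modulo $\GL_2(\ZZ_\ell)$ and the centre, either answer yields the same sum, and so $T_\ell$ commutes with the bijection.
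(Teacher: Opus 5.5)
Your strategy is the paper's. You fix the base quadruple and rescale $\phi\eta^t$ by an inverse of $\ell$; your local $\ell^{-1}\phi\eta^t$ at $q\neq\ell$ is equivalent to the paper's $m\phi\eta^t$ with $m\ell\equiv 1\pmod{pN}$. This keeps the components at $q\neq\ell$ equal to $x_q$, and you then read off the $\ell$-component from Tate modules.

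The gap is that you never pin down that $\ell$-component, and the reasons you give for the leftover ambiguity being harmless do not hold.

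\textbf{The central scalar matters.} Write $\iota_\ell(\ell)=\ell\cdot c$ with $c_q=\ell^{-1}$ for $q\neq\ell$ and $c_\ell=1$. Then $[\iota_\ell(\ell)x]=[cx]$, and in general $c\notin U$. Its $p$-component alone multiplies a form in $\MM_k(\Omega)$ by $\ell^{k}$, and the scalars $\ell^{-1}$ at primes dividing $N$ are not in $U$ either. The $\ell^{-1}$-normalisation of $\omega$ and $\alpha$ cannot absorb this, because you have already used it to get $y_q=x_q$ for $q\neq\ell$. After that, $y_\ell$ is determined up to $\OO_\ell^\times$ by $\OO_\ell y_\ell=\Hom_{\OO_B}(A/G,A_0)\eta\theta\otimes\ZZ_\ell$. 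So a scalar at $\ell$ would be an error, not an ambiguity.

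\textbf{The map $g\mapsto\ell g^{-1}$ does not permute the left cosets} $\GL_2(\ZZ_\ell)g$; it turns them into right cosets. For $g=\begin{psmallmatrix}1&j\\0&\ell\end{psmallmatrix}$ one gets $\ell g^{-1}=\begin{psmallmatrix}1&-j\\0&1\end{psmallmatrix}\begin{psmallmatrix}\ell&0\\0&1\end{psmallmatrix}$. So $\ell$ of the $\ell+1$ terms fall into a single coset, and the resulting sum is not the Hecke sum. The inverse-transpose $g\mapsto\ell(g^{T})^{-1}$ does permute the left cosets, so that part of your bookkeeping could be rescued; the central scalar cannot.

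\textbf{The missing step} is a direct computation with $\eta$ rather than $\eta^t$, which is what the paper does. Under the transpose convention of \autoref{isog thm false 2}, composition of $\OO_B$-linear maps becomes matrix multiplication in the same order. Fix bases of $\Ta_\ell(A_0)$ and $\Ta_\ell(A)$, take $x_\ell$ to be the matrix of $\theta$, and let $g_G$ be the matrix of $\eta$ for some basis of $\Ta_\ell(A/G)$. Then
\begin{equation*}
  \Hom_{\OO_B}(A/G,A_0)\eta\otimes\ZZ_\ell=\Mat_2(\ZZ_\ell)\,g_G,
\end{equation*}
hence $\Hom_{\OO_B}(A/G,A_0)\eta\theta\otimes\ZZ_\ell=\OO_\ell\, g_G x_\ell$ exactly. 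The image of $(A/G,\iota,\alpha_\eta,(\eta^t)^*\omega)$ is therefore $[\iota_\ell(g_G)x]$, with no scalar. As $G$ varies, $\Ta_\ell(A/G)$ runs once through the $\OO_B$-stable lattices containing $\Ta_\ell(A)$ with index $\ell^2$. So the $g_G$ run once through the left cosets in $\GL_2(\ZZ_\ell)\begin{psmallmatrix}1&0\\0&\ell\end{psmallmatrix}\GL_2(\ZZ_\ell)$, which gives the equivariance.
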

\begin{proof}
  We show equivariance for an arbitrary Hecke operator $T_{\ell_0}$.

  We recall how the bijection of \autoref{weight action} and \autoref{main thm 2} was constructed. Fix a supersingular false elliptic quadruple $(A_0,\iota_0,\alpha_0,\omega_0)$ of level $V$, and consider another quadruple $(A,\iota,\alpha,\omega)$. Fix an isogeny $\theta\st (A_0,\iota_0) \to (A,\iota)$. Then the image $[x_\ell] \in \Omega$ of $(A,\iota,\alpha,\omega)$ is determined by the conditions:
  \begin{enumerate}[label=(\roman*)]
    \item $\Hom_{\OO_B}(A,A_0) \theta \otimes \ZZ_\ell = \OO_\ell x_\ell$.
    \item If $\phi \in \Hom_{\OO_B}(A,A_0)$ satisfies $(\phi^t)^* \omega = \omega_0$ and $\alpha_\phi = \alpha_0$, then for all $\ell$,
      \begin{equation*}
        \phi \theta \equiv x_\ell \mod \pi N \OO_\ell x_\ell.
      \end{equation*}
  \end{enumerate}
  Let $\eta\st A \to A/G$ be an isogeny with $G \cong \ZZ/\ell_0\ZZ \times \ZZ/\ell_0\ZZ$ as $\oh_B \otimes \ZZ/\ell_0\ZZ$-modules. We want to determine the image of $(A/G,\iota,\alpha_\eta, (\eta^t)^*\omega)$ in $\Omega$.
  % $$\xymatrix{ A/G \ar@<1ex>[rr]^{\eta^t} && A \ar@<1ex>[ll]^{\eta} \ar@<1ex>[rr]^\phi && A_0 \ar@<1ex>[ll]^\theta}$$
  \begin{center}
    \begin{tikzcd}
      A/G \arrow[rr, shift left=1ex, "\eta^t"] && A \arrow[ll, shift left=1ex, "\eta"] \arrow[rr, shift left=1ex, "\phi"] && A_0 \arrow[ll, shift left=1ex, "\theta"]
    \end{tikzcd}
  \end{center}
  Now $\phi \eta^t \in \Hom_{\OO_B}(A/G,A_0)$ satisfies $((\phi \eta^t)^t)^* (\eta^t)^* \omega = \ell_0 \omega_0$ and $\alpha_{\phi \eta^t \eta} = \ell_0 \alpha_0$. Let $m$ be an integer that is an inverse of $\ell_0$ mod $p N$. Then $\phi_G := m \phi \eta^t \in \Hom_{\OO_B}(A/G,A_0)$ satisfies $(\phi_G^t)^* (\eta^t)^*\omega = \omega_0$ and $\alpha_{\phi_G \eta} = \alpha_0$. Hence the image $[y_\ell] \in \Omega$ of $(A/G,\iota,\alpha_\eta, (\eta^t)^*\omega)$ is determined by the conditions:
  \begin{enumerate}[label=(\roman*)]
    \item $\Hom_{\OO_B}(A/G,A_0)\eta \theta \otimes \ZZ_\ell = \OO_\ell y_\ell$.
    \item $\phi_G \eta\theta \equiv \phi\theta \equiv y_\ell \mod \pi N \OO_\ell y_\ell.$
  \end{enumerate}
  Because $\eta$ has false degree $\ell_0$, we see that for any $\ell \neq \ell_0$,
  \begin{equation*}
    \Hom_{\OO_B}(A/G,A_0)\eta\theta \otimes \ZZ_\ell = \Hom_{\OO_B}(A,A_0)\theta \otimes \ZZ_\ell.
  \end{equation*}
  Consequently, we can take $y_\ell$ to be $x_\ell$ for all $\ell \neq \ell_0$. At the $\ell_0$ place we only need to consider condition (i) since $\ell_0 \nmid pN$.

  Fixing a basis for $\Ta_{\ell_0}(A)$, for each $\eta\st A \to A/G$ we can find a basis for $\Ta_{\ell_0}(A/G)$ such that the linear map $\Ta_{\ell_0}(A) \cong \Mat_2(\ZZ_{\ell_0}) \to \Ta_{\ell_0}(A/G) \cong \Mat_2(\ZZ_{\ell_0})$ induced by $\eta$ is given by right multiplication by a matrix $g_G^T$ of the form $\begin{psmallmatrix}1&0\\i&\ell_0\end{psmallmatrix}$ or $\begin{psmallmatrix} \ell_0 &0 \\0 &1 \end{psmallmatrix}$. Each matrix of this form occurs exactly once as we range over all $\eta$ so the $g_G$ are the coset representatives of $\GL_2(\ZZ_{\ell_0}) \backslash \GL_2(\ZZ_{\ell_0}) \begin{psmallmatrix} 1&0 \\0 & \ell_0 \end{psmallmatrix} \GL_2(\ZZ_{\ell_0})$. Recall that we identify $\Hom_{\OO_B}(A,A_0) \otimes \ZZ_{\ell_0} \cong \Mat_2(\ZZ_{\ell_0})$ by taking the transpose of the matrices acting by right multiplication. Hence, with respect to our choice of basis for the Tate module $\Ta_{\ell_0}(A)$, we see that (for any basis of $\Ta_{\ell_0}(A/G)$)
  \begin{equation*}
    \Hom_{\OO_B}(A/G,A)\eta \otimes \ZZ_{\ell_0} = \Mat_2(\ZZ_{\ell_0}) g_G.
  \end{equation*}

  Since $x_{\ell_0}$ was defined by $\Hom_{\OO_B}(A,A_0)\theta \otimes \ZZ_{\ell_0} = \OO_{\ell_0}x_{\ell_0}$, we see that $y_{\ell_0}$ can be given by $g_G \cdot x_{\ell_0}$ for $g_G$ corresponding to $G$. This proves the theorem.
\end{proof}

We now concern ourselves with the quotients $W_k = M_k/\HH M_{k-(p-1)}$. The Hecke operators are well defined on the quotient because $M_{k-(p-1)}$ is stable under them.

\begin{prop}\label{arise quotient 2}
  The systems of Hecke eigenvalues arising from the $M_k$ as $k$ ranges over the nonnegative integers, are the same as those arising from all the $W_k$.
\end{prop}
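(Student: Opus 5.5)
We prove the two inclusions separately, using the Hecke-stable filtration of $M_k$ by powers of $\HH$ together with \autoref{Hasse quotient 2}. Throughout, "a system $(a_\ell)$ arises from a Hecke module $M$" means there is a nonzero $v\in M$ with $T_\ell v=a_\ell v$ for all $\ell\nmid p\delta N$. All spaces involved are finite-dimensional over $\fpbar$, and the $T_\ell$ commute. Hence a system arises from $M$ if and only if it arises from some Jordan–Hölder constituent of $M$ as a module over the commutative Hecke algebra. It also arises from any Hecke-stable subspace or quotient that has it as a constituent.

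\emph{From $W_k$ to $M$.} Suppose $(a_\ell)$ arises from $W_k$. The exact sequence of Hecke modules
\begin{equation*}
  0\to \HH M_{k-(p-1)}\to M_k\to W_k\to 0
\end{equation*}
is Hecke-equivariant. This holds because $\HH M_{k-(p-1)}$ is stable under the $T_\ell$, by the claim \eqref{eqn} in the proof of \autoref{Hasse quotient 2}; in fact $T_\ell(\HH f)=\HH\, T_\ell f$. So $(a_\ell)$ is a constituent of $M_k$. Since the Hecke algebra is commutative, the generalised eigenspace of $M_k$ for $(a_\ell)$ is nonzero. A nonzero generalised eigenspace contains a genuine simultaneous eigenvector, so $(a_\ell)$ arises from $M_k$.

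\emph{From $M$ to $W$.} Conversely, let $f\in M_k$ be a nonzero simultaneous eigenvector with eigenvalues $(a_\ell)$. Since $\HH$ has only finitely many (simple) zeros and $f\neq 0$, we can write $f=\HH^r g$ with $r\geq 0$ maximal. Here $g\in M_{k-r(p-1)}$ and $g\notin \HH M_{k-(r+1)(p-1)}$. The exponent $r$ is bounded, for instance because degrees of $\om^{\otimes j}$ are negative for $j<0$ and $M_0$ consists of constants. Applying \autoref{Hasse quotient 2} $r$ times, $g$ is a Hecke eigenform with the same eigenvalues $(a_\ell)$. Its image in $W_{k-r(p-1)}$ is nonzero by the maximality of $r$, and this image is an eigenvector with eigenvalues $(a_\ell)$. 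Hence $(a_\ell)$ arises from $W_{k-r(p-1)}$, with $k-r(p-1)\geq 0$ and congruent to $k$ mod $p-1$. This refinement is also what feeds Corollary C(a).

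The main point needing care is the divisibility step. We must check that if $\HH$ divides $f$ as a section, i.e.\ $f$ vanishes at every supersingular point, then the quotient $f/\HH$ is a global section of $\om^{\otimes k-(p-1)}$. This is exactly exactness of the sequence defining $\cQ_k$, using that the zeros of $\HH$ are simple (\autoref{super false prop}(iv)). Also needed is that the process terminates, since $\HH^r$ has degree $r(p-1)\deg\om$, which cannot exceed $\deg \om^{\otimes k}$. The first direction's passage from a constituent to an actual eigenvector is standard commutative algebra, relying on finite-dimensionality of $M_k$ (properness of $\XX$).
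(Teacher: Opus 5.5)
Your proof is correct and is essentially the paper's. The direction from $M_k$ to $W_{k-r(p-1)}$ is the same factorisation $f=\HH^r g$ with $r$ maximal, followed by \autoref{Hasse quotient 2}; for the direction from $W_k$ to $M_k$ you prove directly, via generalised eigenspaces of the finite-dimensional commutative Hecke algebra, the lifting fact that the paper cites as \cite[Proposition 1.2.2]{AS1}. (A harmless slip: $\XX(N)$ is not geometrically connected, so $M_0$ consists of locally constant rather than constant functions, but your degree argument, applied on a component where $f\neq 0$, still bounds $r$.)
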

\begin{proof}
  If $f \in M_k$ is a Hecke eigenform with eigenvalues $(a_\ell)$, then the image $\bar{f}$ of $f$ in the quotient $W_k$ also satisfies $T_\ell(\bar{f}) = a_\ell \bar{f}$ for all $\ell \nmid p\delta N$. To show that the system of eigenvalues $(a_\ell)$ arises from one of the quotients $W_{k'}$, we need to show that there exists a Hecke eigenform $g \in M_{k'}$, also with eigenvalues $(a_\ell)$, such that $\bar{g} \in W_{k'}$ is nonzero. Let $r \geq 0$ be the largest degree to which $f$ vanishes at all the supersingular points on the Shimura curve $\XX$. Then $f$ factors as $f= \HH^r \cdot g$ for some $g \in M_{k-r(p-1)}$, where now $g$ is not a multiple of the Hasse invariant, so has nonzero image in $W_{k-r(p-1)}$. By \autoref{Hasse quotient 2}, $g$ is a Hecke eigenform with the same Hecke eigenvalues $(a_\ell)$. This implies that any system of Hecke eigenvalues $(a_\ell)$ arising from an $M_k$ also arises from some $W_{k'}$, where moreover we can choose $k' \equiv k \mod p-1$.

  Conversely, if $(a_\ell)$ is a system of Hecke eigenvalues arising from a quotient $W_k$, then \cite[Proposition 1.2.2]{AS1} tells us (since we are working over the algebraically closed field $\fpbar$) that $(a_\ell)$ also arises from $M_k$. We remark that while the system of eigenvalues `lifts' from $W_k$ to $M_k$, the eigenvectors need not all lift.
\end{proof}

\begin{lemma}\label{mod weight 2}
  There is a canonical Hecke equivariant isomorphism $Q_k \cong Q_{k+(p^2-1)}$ for any $k$.
\end{lemma}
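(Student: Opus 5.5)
Since $\mu^{p^2-1}=1$ for every $\mu\in\fps^\times$, the transformation law defining $Q_k$ depends only on $k$ modulo $p^2-1$. The plan is to show that in fact $Q_k$ and $Q_{k+(p^2-1)}$ are literally the same space of rules, so that the canonical isomorphism is the identity map. We then check that this identification respects the Hecke operators. There are two ways to do this, and I would give the direct one and mention the other as a cross-check.

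First, the identification. An element of $Q_k$ is a rule $f$ on isomorphism classes of supersingular quadruples $(A,\iota,\alpha,\omega)$, with $\omega$ an $\fps$-rational basis of $\om_{A/\fps}$, such that $f(A,\iota,\alpha,\mu\omega)=\mu^{-k}f(A,\iota,\alpha,\omega)$ for all $\mu\in\fps^\times$. Because $\mu^{-(k+p^2-1)}=\mu^{-k}$ for such $\mu$, the conditions for weights $k$ and $k+(p^2-1)$ coincide, so $Q_k=Q_{k+(p^2-1)}$ as sets of functions. The only point to check is that only $\fps$-rational scalars ever occur. This holds because the definition of $Q_k$ and Definition~\ref{quadruple def} restrict to $\fps$-rational bases of the canonical $\fps$-structure from \autoref{super false prop}(iii).

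Second, Hecke equivariance. On $Q_k$ the operator $T_\ell$ is the restriction of the formula in \autoref{Hecke def 2}:
\begin{equation*}
T_\ell f(A,\iota,\alpha,\omega)=\frac{1}{\ell}\sum_{\eta} f\big(A/G,\iota,\alpha_\eta,(\eta^t)^*\omega\big).
\end{equation*}
This formula does not involve the weight $k$ at all, so the identity map commutes with $T_\ell$ as soon as each term makes sense in both spaces. For that we need $(\eta^t)^*\omega$ to be again an $\fps$-rational basis of $\om_{A/G}$. By the Remark after \autoref{super false prop}, every isogeny between curves with canonical $\fps$-structure is defined over $\fps$. Moreover $\eta^t$ has degree prime to $p$, hence is separable, so $(\eta^t)^*$ is an isomorphism on $\om$ and sends a basis to a basis.

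Alternatively, one can transport the statement through the Hecke-equivariant bijection $Q_k\cong\MM_k(\Omega)$ of \autoref{thm:hecke_equiv} and use \autoref{MM_periodic}, which says $\MM_k(\Omega)=\MM_{k+p^2-1}(\Omega)$. I expect no real obstacle here. The one subtle step is the rationality and nonvanishing of $(\eta^t)^*\omega$, which the paper's earlier results already supply.
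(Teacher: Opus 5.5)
Your proposal is correct and matches the paper's proof: the paper likewise notes that $\mu^{-k}=\mu^{-(k+(p^2-1))}$ for $\mu\in\fps^\times$, so every $f\in Q_k$ is already an element of $Q_{k+(p^2-1)}$, and it leaves Hecke equivariance implicit since the formula for $T_\ell$ does not involve $k$. Your extra checks (that $(\eta^t)^*\omega$ is again an $\fps$-rational basis, and the cross-check via \autoref{thm:hecke_equiv} and \autoref{MM_periodic}) are sound and simply make explicit what the paper takes for granted.
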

\begin{proof}
  Let $f \in Q_k$. Then
  \begin{equation*}
    f(A,\iota,\alpha,\mu\omega)=\mu^{-k}f(A,\iota,\alpha,\omega) = \mu^{-(k+(p^2-1))}f(A,\iota,\alpha,\omega)
  \end{equation*}
  for any supersingular quadruple $(A,\iota,\alpha,\omega)$ and $\mu \in \fps^\times$. So $f$ defines an element of $Q_{k+(p^2-1)}$.
\end{proof}

\begin{prop}\label{Heckesuper}
  The systems of Hecke eigenvalues arising from the $M_k$ as $k$ ranges over the nonnegative integers, are the same as those arising from all the $Q_k$.
\end{prop}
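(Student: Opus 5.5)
By \autoref{arise quotient 2}, it suffices to show that the systems of Hecke eigenvalues arising from all the $W_k$ coincide with those arising from all the $Q_k$. The link is the long exact sequence \eqref{eqn:les}: it gives a Hecke-equivariant injection $W_k=M_k/\HH M_{k-(p-1)}\hookrightarrow Q_k$ for every $k\geq 0$. This is the restriction map to the supersingular locus, and it commutes with $T_\ell$ because the supersingular locus is stable under prime-to-$p$ isogenies. By \autoref{wkskB}, this injection is an isomorphism whenever $k>p+1$, since then $H^1(\XX,\om^{\otimes k-(p-1)})=0$.

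The first inclusion is immediate. If $(a_\ell)$ arises from some $W_k$, pick a nonzero eigenvector $\bar f\in W_k$. Its image in $Q_k$ is nonzero, because the map is injective, and it is an eigenvector with the same eigenvalues, because the map is Hecke-equivariant. So $(a_\ell)$ arises from $Q_k$.

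For the converse, suppose $(a_\ell)$ arises from some $Q_k$. Note that $Q_k$ is defined for every $k\in\ZZ$ via \autoref{weight action}. I will use \autoref{mod weight 2} to move to a large weight: choose $k'\equiv k\pmod{p^2-1}$ with $k'>p+1$. The canonical Hecke-equivariant isomorphism $Q_k\cong Q_{k'}$ shows that $(a_\ell)$ arises from $Q_{k'}$. By \autoref{wkskB}, $Q_{k'}\cong W_{k'}$ Hecke-equivariantly, so $(a_\ell)$ arises from $W_{k'}$. Then \autoref{arise quotient 2} gives that it arises from $M_{k'}$.

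The main point to check carefully is that the isomorphism $W_{k'}\cong Q_{k'}$ for $k'>p+1$ is Hecke-equivariant, and not merely an isomorphism of $\fpbar$-vector spaces as stated in \autoref{wkskB}. I would argue this by noting that the map $M_k\to Q_k$ is literally restriction of the rule $f$ to supersingular quadruples. The Hecke operator on $Q_k$ is given by the same formula as in \autoref{Hecke def 2}, restricted to supersingular quadruples. Since each quotient $A/G$ of a supersingular false elliptic curve by a prime-to-$p$ subgroup is again supersingular (\autoref{super false prop}(i)), and since $G$, $\eta$, $\alpha_\eta$ and $(\eta^t)^*\omega$ are all compatible with this restriction, the two operators agree. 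A bijective equivariant linear map is an isomorphism of Hecke modules, which finishes the argument.
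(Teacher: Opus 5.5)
Your proposal is correct and follows the paper's proof essentially step for step. Both use \autoref{arise quotient 2} to pass between $M_k$ and $W_k$, the Hecke-equivariant injection $W_k\hookrightarrow Q_k$ from \eqref{eqn:les} for one direction, and \autoref{mod weight 2} to shift to a weight $k'>p+1$, where \autoref{wkskB} makes the injection an isomorphism, for the converse. Your extra paragraph on Hecke-equivariance simply spells out what the paper builds into the definition of the Hecke operators on $Q_k$, namely restricting them to the Hecke-stable supersingular locus.
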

\begin{proof}
  Suppose $(a_\ell)$ arises from some $M_k$. By \autoref{arise quotient 2}, $(a_\ell)$ arises from some $W_{k'}$. From the earlier long exact sequence \eqref{eqn:les}, we always have a Hecke equivariant injection $W_{k'} \hookrightarrow Q_{k'}$ (an isomorphism when $k' > p+1$), so that $(a_\ell)$ arises from $Q_{k'}$. Conversely, if $(a_\ell)$ arises from some $Q_k$, we can replace $k$ by $k+(p^2-1)$ by the previous lemma to assume that $k > p+1$. Then \autoref{wkskB} tells us that $(a_\ell)$ arises from $W_k$, and so by \autoref{arise quotient 2} arises also from $M_k$.
\end{proof}

% \begin{cor}
%   The systems of Hecke eigenvalues arising from the modular forms mod $p$, with respect to $B^\times$ of level $V$, all arise from weight $k \leq p^2-1$.
% \end{cor}

Putting together \autoref{weight action}, \autoref{eqn:les}, \autoref{wkskB}, and \autoref{thm:hecke_equiv} we get
\begin{thma}
  For every $k\geq 0$, restriction to the supersingular locus induces a Hecke-equivariant injective map $W_k\to\MM_k(\Omega)$.
  If $k>p+1$, this is an isomorphism.
\end{thma}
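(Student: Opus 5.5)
The plan is to assemble Theorem A from the results of Sections 4 and 5, factoring the restriction map as
\begin{equation*}
  W_k = M_k/\HH M_{k-(p-1)} \longrightarrow Q_k \xrightarrow{\ \sim\ } \MM_k(\Omega).
\end{equation*}
The first arrow comes from the long exact sequence \eqref{eqn:les}. Its kernel is exactly $\HH M_{k-(p-1)}$, so $M_k \to Q_k = H^0(\XX,\cQ_k)$ induces an injection $W_k \hookrightarrow Q_k$ for every $k\geq 0$.

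Concretely, this map sends a section $f$ of $\om^{\otimes k}$ to the rule $(A,\iota,\alpha,\omega)\mapsto f(A,\iota,\alpha,\omega)$ on supersingular quadruples. Here I use that each supersingular point carries its canonical $\fps$-structure by \autoref{super false prop}(iii), and that $\cQ_k$ is supported on the reduced zero locus of $\HH$, since the zeros are simple by \autoref{super false prop}(iv). This is precisely ``restriction to the supersingular locus.'' The second arrow is the bijection of \autoref{weight action}, which is linear because it is induced by a bijection of underlying sets.

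Hecke equivariance comes in two parts. For the first arrow, the Hecke operators on $Q_k$ are by definition the restrictions of those on $M_k$, using the same formula from \autoref{Hecke def 2}. This makes sense because prime-to-$p$ isogenies preserve supersingularity (\autoref{super false prop}(i)) and $\HH M_{k-(p-1)}$ is Hecke-stable by the proof of \autoref{Hasse quotient 2}. So $W_k\to Q_k$ commutes with $T_\ell$. For the second arrow, equivariance is \autoref{thm:hecke_equiv}.

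For $k>p+1$ I apply the vanishing lemma with weight $k-(p-1)\geq 3$, which gives $H^1(\XX,\om^{\otimes k-(p-1)})=0$. Then \eqref{eqn:les} shows $M_k\to Q_k$ is surjective, so $W_k\cong Q_k$, as recorded in \autoref{wkskB}. Composing with \autoref{weight action} gives the isomorphism $W_k\cong\MM_k(\Omega)$.

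The only delicate point, and the one I expect to be the main obstacle, is confirming that $H^0(\XX,\cQ_k)$ coincides with the space $Q_k$ of rules on quadruples over $\fps$. There are two parts to this. First, since $\HH$ has simple zeros, $\cQ_k$ is $\om^{\otimes k}$ restricted to the reduced finite supersingular subscheme. Second, the weight condition only needs $\mu\in\fps^\times$, because the $\fps$-rational bases $\omega$ form a single $\fps^\times$-orbit. Once these two checks are made, everything else is bookkeeping with previously proved results.
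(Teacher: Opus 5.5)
Your proposal is correct and is essentially the paper's own argument. The paper obtains Theorem A by combining the injection $W_k\hookrightarrow Q_k$ from \eqref{eqn:les}, the bijection $Q_k\cong\MM_k(\Omega)$ of \autoref{weight action}, its Hecke-equivariance (\autoref{thm:hecke_equiv}), and the $H^1$-vanishing that gives surjectivity for $k>p+1$ (\autoref{wkskB}); your remark that the simplicity of the zeros of $\HH$ is what identifies $\cQ_k$ with the fibres of $\om^{\otimes k}$ at the reduced supersingular points makes explicit a step the paper only asserts.
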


Finally, from Theorem A, Theorem B, and \autoref{arise quotient 2}, we have
\begin{corc}
  Let $(a_{\ell})\in\ds\prod_{\ell\nmid p\delta N} \fpbar$ be a system of Hecke eigenvalues.
  \begin{enumerate}
    \item If $(a_\ell)$ arises from $M_k$, then it arises from $\MM_{k'}(\Omega)$ for some $0\leq k'\leq k$ such that $k'\equiv k\pmod{p-1}$.
    \item If $(a_\ell)$ arises from $\MM_k(\Omega)$, then it arises from $M_{k'}$ for every $k'>p+1$ such that $k'\equiv k,pk\pmod{p^2-1}$.
  \end{enumerate}
  In particular, the systems of Hecke eigenvalues arising from $\ds\bigoplus_k M_k$ coincide with those arising from $\MM(\Omega)=\ds\bigoplus_{k\text{ mod }p^2-1} \MM_k(\Omega)$.
\end{corc}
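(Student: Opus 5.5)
The plan is to derive both parts from Theorem A, Theorem B, Proposition \ref{arise quotient 2} and Proposition \ref{Hasse quotient 2}, moving between the two sides through the quotients $W_k$.

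\textbf{Part (a).} Suppose $f\in M_k$ is a Hecke eigenform with eigenvalues $(a_\ell)$. Since $\HH$ is nonzero and $\XX$ is a proper curve, $f$ vanishes to finite order along the supersingular locus. Write $f=\HH^r g$ with $r$ maximal and $g\in M_{k-r(p-1)}$. This is exactly the argument in the proof of \autoref{arise quotient 2}, and it needs $k'=k-r(p-1)\geq 0$, which is automatic because $M_{k'}=0$ for negative $k'$ (line bundle of negative degree).

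By \autoref{Hasse quotient 2}, $g$ is an eigenform with the same eigenvalues $(a_\ell)$. Its image $\bar g\in W_{k'}$ is nonzero and is still an eigenvector. Theorem A gives a Hecke-equivariant injection $W_{k'}\hookrightarrow\MM_{k'}(\Omega)$, so the image of $\bar g$ is a nonzero eigenvector in $\MM_{k'}(\Omega)$ with eigenvalues $(a_\ell)$. By construction $0\leq k'\leq k$ and $k'\equiv k\pmod{p-1}$.

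\textbf{Part (b).} Suppose $(a_\ell)$ arises from $\MM_k(\Omega)$, and let $k'>p+1$ satisfy $k'\equiv k$ or $k'\equiv pk\pmod{p^2-1}$.

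1. If $k'\equiv k$, then $\MM_{k'}(\Omega)=\MM_k(\Omega)$ by \autoref{MM_periodic}.
2. If $k'\equiv pk$, then \autoref{MM_dihedral} gives a Hecke-equivariant isomorphism $\MM_k(\Omega)\cong\MM_{pk}(\Omega)=\MM_{k'}(\Omega)$.
3. In either case $(a_\ell)$ arises from $\MM_{k'}(\Omega)$. Since $k'>p+1$, Theorem A identifies $\MM_{k'}(\Omega)$ with $W_{k'}$ Hecke-equivariantly, so $(a_\ell)$ arises from $W_{k'}$.
4. The converse half of \autoref{arise quotient 2}, via \cite[Proposition 1.2.2]{AS1} over the algebraically closed field $\fpbar$, lifts the eigensystem from $W_{k'}$ to $M_{k'}$.

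The negative-degree vanishing used in Part (a) is also needed here. Theorem A is stated for $k\geq 0$, and the condition $k'>p+1$ keeps us in range.

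\textbf{Final statement.} One inclusion is part (a). For the other, note that infinitely many $k'>p+1$ lie in any residue class modulo $p^2-1$, so part (b) applies. By \autoref{MM_periodic}, $\MM(\Omega)$ decomposes as the direct sum of the $\MM_k(\Omega)$ over $k\bmod p^2-1$. This uses the fact that $\fps^\times$ acts through a group of order prime to $p$, so the weight spaces split $\MM(\Omega)$ and are Hecke-stable. Consequently any Hecke eigenvector in $\MM(\Omega)$ has a nonzero weight component carrying the same eigenvalues.

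The only point requiring care is this last one: an eigensystem of $\MM(\Omega)$ is realised in a single weight space. It follows from the semisimplicity of the $\fps^\times$-action, since $p\nmid p^2-1$, together with the fact that the Hecke operators commute with that action, as noted before \autoref{MM_dihedral}.
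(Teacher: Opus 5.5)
Your proposal is correct and follows the paper's route. The paper derives Corollary C directly from Theorem A, Theorem B and \autoref{arise quotient 2}: your part (a) unpacks the proof of \autoref{arise quotient 2}, and your part (b) combines \autoref{MM_periodic}, \autoref{MM_dihedral}, Theorem A and the lifting result of \cite{AS1} in the same way. Your extra argument that an eigensystem of $\MM(\Omega)$ already occurs in a single $\MM_k(\Omega)$, using the prime-to-$p$ order of $\fps^\times$ and its commutation with the $T_\ell$, correctly justifies the decomposition $\MM(\Omega)=\bigoplus_{k \bmod p^2-1}\MM_k(\Omega)$, which the paper asserts without comment.
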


% Combining \autoref{wkskB}, \autoref{thm:hecke_equiv}, and \autoref{Heckesuper}, we have the following:

% \begin{thm}\label{cor:main}
%   The systems of Hecke eigenvalues $(a_\ell)$ arising from the modular forms mod $p$ with respect to $B^\times$, over all weights $k$ and fixed level $V$, are the same as those arising from $\MM(\Omega)$.
% \end{thm}

\bibliographystyle{alpha}
\bibliography{refs}

\end{document}